\theoremstyle{plain}
\newtheorem{pretheo}{Theorem}[section]
\newtheorem{preassu}[pretheo]{Assumption}
\newtheorem{precoro}[pretheo]{Corollary}
\newtheorem{predefi}[pretheo]{Definition}
\newtheorem{preexam}[pretheo]{Example}
\newtheorem{prelemm}[pretheo]{Lemma}
\newtheorem{preprop}[pretheo]{Proposition}
\newtheorem{prerema}[pretheo]{Remark}
\newenvironment{theo}{\begin{pretheo}}{\end{pretheo}}
\newenvironment{coro}{\begin{precoro}}{\end{precoro}}
\newenvironment{defi}{\begin{predefi}}{\end{predefi}}
\newenvironment{lemm}{\begin{prelemm}}{\end{prelemm}}
\newenvironment{rema}{\begin{prerema}\rm}{\end{prerema}}
\DeclareMathOperator{\dv}{div}
\DeclareMathOperator{\hlm}{Hol}
\DeclareMathOperator{\Dv}{Div}
\newcommand{\intd}{\,d}
\newcommand{\pd}{\partial}
\newcommand{\wht}[1]{\widehat{#1}}
\newcommand{\wtd}[1]{\widetilde{#1}}
\newcommand{\BBM}{\mathbb{M}}
\newcommand{\Ba}{\mathbf{a}}
\newcommand{\Bb}{\mathbf{b}}
\newcommand{\Bf}{\mathbf{f}}
\newcommand{\Bh}{\mathbf{h}}
\newcommand{\Bn}{\mathbf{n}}
\newcommand{\Bu}{\mathbf{u}}
\newcommand{\Bv}{\mathbf{v}}
\newcommand{\BC}{\mathbf{C}}
\newcommand{\BD}{\mathbf{D}}
\newcommand{\BE}{\mathbf{E}}
\newcommand{\BF}{\mathbf{F}}
\newcommand{\BG}{\mathbf{G}}
\newcommand{\BH}{\mathbf{H}}
\newcommand{\BI}{\mathbf{I}}
\newcommand{\BK}{\mathbf{K}}
\newcommand{\BL}{\mathbf{L}}
\newcommand{\BM}{\mathbf{M}}
\newcommand{\BN}{\mathbf{N}}
\newcommand{\BR}{\mathbf{R}}
\newcommand{\BS}{\mathbf{S}}
\newcommand{\BT}{\mathbf{T}}
\newcommand{\BU}{\mathbf{U}}
\newcommand{\BV}{\mathbf{V}}
\newcommand{\CA}{\mathcal{A}}
\newcommand{\CB}{\mathcal{B}}
\newcommand{\CF}{\mathcal{F}}
\newcommand{\CG}{\mathcal{G}}
\newcommand{\CK}{\mathcal{K}}
\newcommand{\CL}{\mathcal{L}}
\newcommand{\CM}{\mathcal{M}}
\newcommand{\CP}{\mathcal{P}}
\newcommand{\CR}{\mathcal{R}}
\newcommand{\CS}{\mathcal{S}}
\newcommand{\CT}{\mathcal{T}}
\newcommand{\CX}{\mathcal{X}}
\newcommand{\CY}{\mathcal{Y}}
\newcommand{\CZ}{\mathcal{Z}}
\newcommand{\Fm}{\mathfrak{m}}
\newcommand{\Fn}{\mathfrak{n}}
\newcommand{\Fp}{\mathfrak{p}}
\newcommand{\Fq}{\mathfrak{q}}
\newcommand{\FA}{\mathfrak{A}}
\newcommand{\FB}{\mathfrak{B}}
\newcommand{\FX}{\mathfrak{X}}
\newcommand{\FY}{\mathfrak{Y}}
\newcommand{\SSN}{\mathsf{N}}
\newcommand{\SST}{\mathsf{T}}
\newcommand{\vps}{\varepsilon}
\newcommand{\vph}{\varphi}
\numberwithin{equation}{section} 
\begin{document}
\title[Compressible fluid model of Korteweg type]
{Existence of $\CR$-bounded solution operator families
for a compressible fluid model of Korteweg type on the half-space}

\author[Hirokazu Saito]{Hirokazu Saito}
\address{Faculty of Industrial Science and Technology,
Tokyo University of Science,
102-1 Tomino, Oshamambe-cho, Yamakoshi-gun,
Hokkaido 049-3514, Japan}
\email{hsaito@rs.tus.ac.jp}

\subjclass[2000]{Primary: 76N10; Secondary: 35K50.}
%\subjclass[2010]{Primary: 35Q30; Secondary: 76N10.}

\keywords{Korteweg model; Compressible viscous fluids; Half-space problems;
Resolvent problem; $\CR$-bounded solution operator families}

\thanks{This research was partly supported by JSPS Grant-in-aid for Young Scientists (B) \#17K14224.}

%\date{May 30, 2018}

%\dedicatory{}

\begin{abstract}
The aim of this paper is to show the existence of $\CR$-bounded solution operator families
for a generalized resolvent problem on the half-space 
arising from a compressible fluid model of Korteweg type. 
Such a compressible fluid model was derived by Dunn and Serrin (1985)
and studied by Kotschote (2008) as an initial-boundary value problem.
\end{abstract}

\maketitle

%%%%%%%%%%%%%%%%%%%%%%%%%%%%%%%%%%%%%%%%%%%%%%%%%%%%%%%%%%%%%%%%%%%%%%
\section{Introduction and main results}\label{sec1}

%%%%%%%%%%
\subsection{Introduction}\label{subsec1-1}
Let $\BR_+^N$ and $\BR_0^N$ be respectively the half-space and the boundary of $\BR_+^N$ for $N\geq 2$,
i.e.
\begin{align*}
\BR_+^N
&=\{x=(x',x_N) \mid x'=(x_1,\dots,x_{N-1})\in\BR^{N-1}, x_N>0\}, \\
\BR_0^N
&=\{x=(x',x_N) \mid x'=(x_1,\dots,x_{N-1})\in\BR^{N-1},x_N=0\}.
\end{align*}

This paper is concerned with the existence of $\CR$-bounded solution operator families
for a generalized resolvent problem on $\BR_+^N$ 
arising from a compressible fluid model of Korteweg type as follows:
\begin{equation}\label{eq1:gen-res}
\left\{\begin{aligned}
\lambda\rho+\dv\Bu&=d && \text{in $\BR_+^N$,} \\
\lambda\Bu-\mu_*\Delta\Bu-\nu_*\nabla\dv\Bu-\kappa_*\nabla\Delta\rho&=\Bf && \text{in $\BR_+^N$,} \\
\Bn\cdot\nabla\rho=g, \quad \Bu&=0 && \text{on $\BR_0^N$.}
\end{aligned}\right.
\end{equation}

Here $\lambda$ is the resolvent parameter varying in 
\begin{equation}\label{defi:sector}
\Sigma_\vps=\{\lambda\in\BC\setminus\{0\} \mid |\arg\lambda|<\pi-\vps\}
\end{equation}
for $\vps\in(0,\pi/2)$;
$\rho=\rho(x)$ and $\Bu=\Bu(x)=(u_1(x),\dots,u_N(x))^\SST$ are respectively the fluid density and the fluid velocity that are unknown functions;
$d=d(x)$, $\Bf=\Bf(x)=(f_1(x),\dots,f_N(x))^\SST$, and $g=g(x)$ are given functions;
$\Bn=(0,\dots,0,-1)^\SST$ is the outward unit normal vector on $\BR_0^N$;
$\Ba\cdot\Bb=\sum_{j=1}^N a_j b_j$ for any $N$-vectors $a=(a_1,\dots,a_N)^\SST$ and $\Bb=(b_1,\dots,b_N)^\SST$.
Here and subsequently, one uses the following notation for differentiations:
Let $u=u(x)$, $\Bv=(v_1(x),\dots,v_N(x))^\SST$, and $\BM=(M_{ij}(x))$ be a scalar-, a vector-, and an $N\times N$ matrix-valued function
defined on a domain of $\BR^N$, and then for $\pd_j=\pd/\pd x_j$
\begin{align*}
&\nabla u = (\pd_1 u,\dots,\pd_N u)^\SST, \quad \Delta u=\sum_{j=1}^N\pd_j^2 u, \quad \Delta\Bv=(\Delta v_1,\dots,\Delta v_N)^\SST, \\
&\dv\Bv=\sum_{j=1}^N\pd_j v_j, \quad 
\nabla\Bv=\{\pd_j v_k \mid j,k=1,\dots,N\}, \\
&\nabla^2\Bv=\{\pd_j\pd_k v_l \mid j,k,l=1,\dots,N\}, \quad
\Dv\BM=\bigg(\sum_{j=1}^N\pd_j M_{1j},\dots,\sum_{j=1}^N\pd_j M_{Nj}\bigg)^\SST.
\end{align*}

The $\mu_*$ and $\nu_*$ are positive constants describing the viscosity coefficients,
while $\kappa_*$ is a positive constant describing the capillarity coefficient (cf. e.g. \cite{Saito1} for more detail).
In \cite{Saito1}, the author assumes 
\begin{equation}\label{defi:eta_*}
\eta_*:=\left(\frac{\mu_*+\nu_*}{2\kappa_*}\right)^2-\frac{1}{\kappa_*}\neq 0  \quad \text{and} \quad  \kappa_*\neq \mu_*\nu_*
\end{equation}
in order to show the existence of $\CR$-bounded solution operator families 
for a generalized problem with free boundary condition.
On the other hand, this paper shows the existence of $\CR$-bounded solutions operator families associated with \eqref{eq1:gen-res}
for arbitrary positive constants $\mu_*$, $\nu_*$, and $\kappa_*$. 
To this end, we divide in the present paper the proof into five cases as follows:
\begin{align*}
\begin{aligned}
&\text{{\bf Case I}: } && \eta_*<0; \\
&\text{{\bf Case II}: } && \eta_*>0 \quad \text{and} \quad \kappa_*\neq \mu_*\nu_*; \\
&\text{{\bf Case III}: } && \eta_*>0 \quad \text{and} \quad \kappa_*= \mu_*\nu_*; \\ 
&\text{{\bf Case IV}: } && \eta_*=0 \quad \text{and} \quad \kappa_*\neq \mu_*\nu_*; \\
&\text{{\bf Case V}: } && \eta_*=0 \quad \text{and} \quad \kappa_*= \mu_*\nu_*.
\end{aligned}
\end{align*}
Throughout this paper, Cases I, II, III, IV, and V mean the above five cases.

The motion of barotropic compressible fluids is governed by 
\begin{equation*}
\begin{aligned}
\pd_t\rho+\dv(\rho\Bu)&=0 && \text{(mass conservation),} \\
\rho(\pd_t\Bu+\Bu\cdot\nabla\Bu)&=\Dv(\BT-P(\rho)\BI) && \text{(momentum conservation),}
\end{aligned}
\end{equation*}
subject to the initial condition and suitable boundary conditions,
where $\BI$ is the $N\times N$ identity matrix and
$P:[0,\infty)\to\BR$ is a given smooth function describing the pressure.
Dunn and Serrin have shown in \cite{DS85} that for a special material of Korteweg type 
the stress tensor $\BT$ is given by
\begin{align*}
&\BT = \BS(\Bu)+\BK(\rho), \quad \BS(\Bu)=\mu\BD(\Bu)+(\nu-\mu)\dv\Bu\BI, \\
&\BK(\rho)=\frac{\kappa}{2}(\Delta^2\rho-|\nabla\rho|^2)\BI-\kappa\nabla\rho\otimes\nabla\rho,
\end{align*}
where $\BS(\Bu)$ is the usual viscous stress tensor with viscosity coefficients $\mu$, $\nu$
%and the doubled strain tensor $\BD(\Bu)$ whose $(i,j)$-component is given by $\pd_i u_j+\pd_j u_i$;
and $\BD(\Bu)$ is the doubled strain tensor whose $(i,j)$-component is given by $\pd_i u_j+\pd_j u_i$;
$\BK(\rho)$ is called the Korteweg tensor with a capillarity coefficient $\kappa$.
%$\BS(\Bu)$ is the standard viscous stress tensor with viscosity coefficients $\mu$, $\nu$
%and $\BK(\rho)$ is called the Korteweg tensor with a capillarity coefficient $\kappa$.
%Note that $\BD(\Bu)$ is the doubled strain tensor whose $(i,j)$-component is given by $\pd_i u_j+\pd_j u_i$.
Such a Korteweg-type model was mathematically studied by Kotschote \cite{Kotschote08, Kotschote10, Kotschote12, Kotschote14}
as initial-boundary value problems.
In a forthcoming paper \cite{Saito2},
we prove a maximal regularity for some linearized system of an initial-boundary value problem
of the Korteweg-type model on general domains by using results obtained in this paper. 
We also refer to \cite{Saito2} in order to see a brief history of mathematical studies of Korteweg-type models.

Throughout this paper, the letter $C$ denotes generic constants and
$C_{a,b,c,\dots}$ means that the constant depends on the quantities $a,b,c,\dots$.
The values of constants $C$ and $C_{a,b,c,\dots}$ may change from line to line.

%%%%%%%%%%
\subsection{Main results}\label{subsec1-2}
To state our main results, we first introduce the notation and the definition of the $\CR$-boundedness of operator families.

The set of all natural numbers is denoted by $\BN$ and $\BN_0=\BN\cup\{0\}$,
while the set of all complex numbers is denoted by $\BC$ and $\BC_+=\{z\in\BC \mid \Re z>0\}$.
Let $q\in[1,\infty]$ and $G$ be a domain of $\BR^N$.
Then $L_q(G)$ and $H_q^m(G)$, $m\in\BN$, denote respectively the usual Lebesgue spaces on $G$
and the usual Sobolev spaces on $G$. 
One sets $H_q^0(G)=L_q(G)$ and denotes the norm of $H_q^n(G)$, $n\in\BN_0$, by $\|\cdot\|_{H_q^n(G)}$.

Let $X$ and $Y$ be Banach spaces. Then $X^m$, $m\in\BN$, denotes the $m$-product space of $X$,
while the norm of $X^m$ is usually denoted by $\|\cdot\|_X$ for short.
The symbol $\CL(X,Y)$ stands for the Banach space of all bounded linear operators from $X$ to $Y$,
and $\CL(X)$ is the abbreviation of $\CL(X,X)$.
For a domain $U$ of $\BC$,
$\hlm(U,\CL(X,Y))$ is the set of all $\CL(X,Y)$-valued holomorphic functions defined on $U$.

For the right member $(d,\Bf,g)$ of \eqref{eq1:gen-res}, we set
$$
\CX_q^1(G) = H_q^1(G)\times L_q(G)^N, \quad
\CX_q^2(G)=H_q^1(G)\times L_q(G)^N\times H_q^2(G).
$$
Let $\BF^1=(d,\Bf)\in\CX_q^1(G)$ and $\BF^2=(d,\Bf,g)\in \CX_q^2(G)$.
Symbols $\FX_q^j(G)$ and $\CF_\lambda^j$ $(j=1,2)$ are then defined as follows:
\begin{align*}
&\FX_q^1(G)=L_q(G)^{\SSN_1}, \quad \SSN_1=N+1+N, \quad
\CF_\lambda^1\BF^1 = (\nabla d, \lambda^{1/2}d, \Bf)\in\FX_q^1(G); \\
&\FX_q^2(G)=L_q(G)^{\SSN_2}, \quad \SSN_2=N+1+N+N^2+N+1, \\
&\CF_\lambda^2\BF^2 = (\nabla d, \lambda^{1/2}d, \Bf, \nabla^2 g,\lambda^{1/2}\nabla g, \lambda g)\in\FX_q^2(G).
\end{align*}
One also sets for solutions of \eqref{eq1:gen-res}
\begin{alignat}{2}\label{defi:AB}
\FA_q^0(G) &= L_q(G)^{N^3+N^2+N+1}, \quad
&\CS_\lambda^0 \rho &= (\nabla^3 \rho, \lambda^{1/2}\nabla^2\rho,\lambda\nabla\rho,\lambda^{3/2}\rho); \\
\FB_q(G) &= L_q(G)^{N^3+N^2+N}, \quad
&\CT_\lambda\Bu &= (\nabla^2\Bu,\lambda^{1/2}\nabla\Bu,\lambda\Bu). \notag
\end{alignat}

\begin{rema}
The above symbols $\CX_q^1(G)$, $\CX_q^2(G)$, $\FX_q^1(G)$, $\CF_\lambda^1$, $\FX_q^2(G)$, $\CF_\lambda^2$,
$\FA_q^0(G)$, $\CS_\lambda^0$, $\FB_q(G)$, and $\CT_\lambda$ are defined in the same manner as \cite{Saito2}.
\end{rema}

The definition of the $\CR$-boundedness of operator families is as follows:

\begin{defi}\label{defi:R}
Let $X$ and $Y$ be Banach spaces.
A family of operators $\CT\subset\CL(X,Y)$ is called $\CR$-bounded on $\CL(X,Y)$,
if there exist constants $p\in[1,\infty)$ and $C>0$ such that the following assertion holds true:
For each $m\in\BN$, $\{T_j\}_{j=1}^m\subset \CT$, $\{f_j\}_{j=1}^m\subset X$
and for all sequences $\{r_j(u)\}_{j=1}^m$ of independent, symmetric, $\{-1,1\}$-valued random variables on $[0,1]$,
there holds 
\begin{equation*}
\Bigg(\int_0^1\Big\|\sum_{j=1}^m r_j(u)T_j f_j\Big\|_Y^p\intd u\Bigg)^{1/p}
\leq C\Bigg(\int_0^1\Big\|\sum_{j=1}^m r_j(u)f_j\Big\|_X^p\intd u\Bigg)^{1/p}.
\end{equation*}
The smallest such $C$ is called $\CR$-bound of $\CT$ on $\CL(X,Y)$
and denoted by $\CR_{\CL(X,Y)}(\CT)$.
\end{defi}

\begin{rema}\label{rema:Kahane}
\begin{enumerate}[(1)]
\item
The constant $C$ in Definition \ref{defi:R} may depend on $p$.
\item
It is known that $\CT$ is $\CR$-bounded for any
$p\in[1,\infty)$, provided that $\CT$ is $\CR$-bounded for some $p\in[1,\infty)$.
This fact follows from Kahane's inequality (cf. e.g. \cite[Theorem 2.4]{KW04}).
\end{enumerate}
\end{rema}

One now states the main result of this paper.

\begin{theo}\label{theo:main}
Let $q\in(1,\infty)$ and assume that $\mu_*$, $\nu_*$, and $\kappa_*$ are positive constants.
Then, for any $\lambda\in\BC_+$, there exist operators $\CA(\lambda)$ and $\CB(\lambda)$, with
\begin{align*}
\CA(\lambda)&\in\hlm(\BC_+,\CL(\FX_q^2(\BR_+^N),H_q^3(\BR_+^N)), \\
\CB(\lambda)&\in \hlm(\BC_+,\CL(\FX_q^2(\BR_+^N),H_q^2(\BR_+^N)^N)),
\end{align*}
such that, for any $\BF=(d,\Bf,g)\in\CX_q^2(\BR_+^N)$, 
$(\rho,\Bu)=(\CA(\lambda)\CF_\lambda^2\BF,\CB(\lambda)\CF_\lambda^2\BF)$ is a unique solution to the system \eqref{eq1:gen-res}.
In addition, for $n=0,1$,
\begin{align*}
\CR_{\CL(\FX_q^2(\BR_+^N),\FA_q^0(\BR_+^N))}\left(\left\{\left.\left(\lambda\frac{d}{d\lambda}\right)^n
\left(\CS_\lambda^0\CA(\lambda)\right) \right| \lambda\in\BC_+\right\}\right)
&\leq C_{N,q,\mu_*,\nu_*,\kappa_*}, \\
\CR_{\CL(\FX_q^2(\BR_+^N),\FB_q(\BR_+^N))}\left(\left\{\left.\left(\lambda\frac{d}{d\lambda}\right)^n
\left(\CT_\lambda\CB(\lambda)\right) \right| \lambda\in\BC_+\right\}\right)
&\leq C_{N,q,\mu_*,\nu_*,\kappa_*}, 
\end{align*}
where $C_{N,q,\mu_*,\nu_*,\kappa_*}$ is a positive constant depending on at most $N$, $q$, $\mu_*$, $\nu_*$, and $\kappa_*$.
\end{theo}

This paper is organized as follows:
The next section first introduces some classes of symbols and their fundamental properties.
Secondly, one computes characteristic roots that appear in analysis of a system of ordinary differential equations
in the Fourier space in Sections 3, 4, 5, and 6.
Thirdly, we introduce technical lemmas that play an important role in proving
the existence of $\CR$-bounded solution operator families.
Fourthly, the system \eqref{eq1:gen-res} is reduced to the case where $(d,\Bf)=(0,0)$,
and the main theorem (i.e. the existence of $\CR$-bounded solution operator families) is stated for the reduced system.
Furthermore, one proves Theorem \ref{theo:main}, assuming the main theorem of the reduced system holds.
Section 3 proves the main theorem of the reduced system for Cases I and II.
Section 4, 5, and 6 treat respectively Case III, IV, and V, and prove the matin theorem of the reduced system.

%%%%%%%%%%%%%%%%%%%%%%%%%%%%%%%%%%%%%%%%%%%%%%%%%%%%%%%%%%%%%%%%%%%%%%
\section{Preliminaries}\label{sec2}
%This section first introduces some classes of symbols.
%Secondly, we compute characteristic roots of some ordinary differential equation appearing in Section \ref{sec3} below.
%Thirdly, we introduce technical lemmas that play an important role in proving 
%the existence of $\CR$-bounded solution operator families.
%Finally, the system \eqref{eq1:gen-res} is reduced to the case $(d,\Bf)=(0,0)$.
%%%%%%%%%%
\subsection{Classes of symbols}\label{subsec2-1}
%This subsection introduces two classes of symbols and their properties.
Recall that $\Sigma_\vps$ is given in \eqref{defi:sector} for $\vps\in(0,\pi/2)$.
Let $\Lambda=\Sigma_\vps$ or $\Lambda=\BC_+$,
and let $m(\xi',\lambda)$ be a function, defined on $(\BR^{N-1}\setminus\{0\})\times \Lambda$,
that is infinitely many times differentiable with respect to $\xi'=(\xi_1,\dots,\xi_{N-1})$ and holomorphic with respect to $\lambda$.
For any multi-index $\alpha'=(\alpha_1,\dots,\alpha_{N-1})\in\BN_0^{N-1}$,
let us define $\pd_{\xi'}^{\alpha'}$ by
$$
\pd_{\xi'}^{\alpha'} = \frac{\pd^{|\alpha'|}}{\pd\xi_1^{\alpha_1}\dots\pd\xi_{N-1}^{\alpha_{N-1}}}, \quad 
|\alpha'|=\alpha_1+\dots+\alpha_{N-1}.
$$
If there exists a real number $r$ such that
for any multi-index $\alpha'=(\alpha_1,\dots,\alpha_{N-1})\in\BN_0^{N-1}$ 
and $(\xi',\lambda)\in(\BR^{N-1}\setminus\{0\})\times\Lambda$
\begin{equation*}
\left|\pd_{\xi'}^{\alpha'}\left(\left(\lambda\frac{d}{d\lambda}\right)^n m(\xi',\lambda)\right)\right|
\leq C(|\lambda|^{1/2}+|\xi'|)^{r-|\alpha'|} \quad (n=0,1)
\end{equation*}
with some positive constant $C$ depending solely on $N$, $r$, $\alpha'$, and $\vps$,  
then $m(\xi',\lambda)$ is called a multiplier of order $r$ with type $1$.
If there exists a real number $r$ such that
for any multi-index $\alpha'=(\alpha_1,\dots,\alpha_{N-1})\in\BN_0^{N-1}$ 
and $(\xi',\lambda)\in(\BR^{N-1}\setminus\{0\})\times\Lambda$
\begin{equation*}
\left|\pd_{\xi'}^{\alpha'}\left(\left(\lambda\frac{d}{d\lambda}\right)^n m(\xi',\lambda)\right)\right|
\leq C(|\lambda|^{1/2}+|\xi'|)^r|\xi'|^{-|\alpha'|} \quad (n=0,1)
\end{equation*}
with some positive constant $C$ depending solely on $N$, $r$, $\alpha'$, and $\vps$,
then $m(\xi',\lambda)$ is called a multiplier of order $r$ with type $2$.

Here and subsequently, we denote the set of all symbols of order $r$ with type $j$ on  
$(\BR^{N-1}\setminus\{0\})\times\Lambda$ by $\BBM_{r,j}(\Lambda)$.
For instance,
\begin{equation*}
\xi_k/|\xi'|\in \BBM_{0,2}(\Lambda), \quad \xi_k,\lambda^{1/2}\in\BBM_{1,1}(\Lambda) \quad (k=1,\dots,N-1),
\end{equation*}
and also $|\xi'|^2,\lambda \in\BBM_{2,1}(\Lambda)$.
One notes for $r\in\BR$ and $j=1,2$ that $\BBM_{r,j}(\Lambda)$ are vector spaces on $\BC$
and that $\BBM_{r,j}(\Sigma_\vps)\subset \BBM_{r,j}(\BC_+)$ for any $\vps\in(0,\pi/2)$.
In addition, we know the following fundamental properties of $\BBM_{r,j}(\Lambda)$ (cf. \cite[Lemma 5.1]{SS12}).

\begin{lemm}\label{lemm:algebra}
Let $r_1,r_2\in\BR$, and let $\Lambda=\Sigma_\vps$ for some $\vps\in(0,\pi/2)$ or $\Lambda=\BC_+$. 
Then the following assertions hold true:
\begin{enumerate}[$(1)$]
\item
Given $l_j\in\BBM_{r_j,1}(\Lambda)$ $(j=1,2)$,
we have $l_1l_2\in \BBM_{r_1+r_2,1}(\Lambda)$.
\item
Given $m_j\in\BBM_{r_j,j}(\Lambda)$ $(j=1,2)$,
we have $m_1m_2\in\BBM_{r_1+r_2,2}(\Lambda)$.
\item
Given $n_j\in\BBM_{r_j,2}(\Lambda)$ $(j=1,2)$,
we have $n_1n_2\in\BBM_{r_1+r_2,2}(\Lambda)$.
\end{enumerate}
\end{lemm}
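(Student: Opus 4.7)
The strategy is to reduce every assertion to the Leibniz rule applied to the differential operators appearing in the definition. Note that there are really two differentiations to track: the spatial derivative $\pd_{\xi'}^{\alpha'}$ and the single (or zeroth) application of $\lambda\,d/d\lambda$. For the latter, the product rule gives $(\lambda\,d/d\lambda)(m_1m_2)=(\lambda\,d/d\lambda)m_1\cdot m_2+m_1\cdot(\lambda\,d/d\lambda)m_2$, so it suffices to estimate each of the four possible combinations (two choices of where $\lambda\,d/d\lambda$ lands, times $n=0,1$) in the same uniform way.

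A useful preliminary observation is the inclusion $\BBM_{r,1}(\Lambda)\subset \BBM_{r,2}(\Lambda)$, which follows at once from the inequality $|\xi'|\leq |\lambda|^{1/2}+|\xi'|$ (hence $(|\lambda|^{1/2}+|\xi'|)^{-|\alpha'|}\leq |\xi'|^{-|\alpha'|}$) applied to the type~1 bound. Consequently, assertion (2) will be an immediate consequence of assertion (3), and only (1) and (3) need genuine verification.

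For (1), I would apply Leibniz to $\pd_{\xi'}^{\alpha'}(l_1 l_2)$ to obtain
\begin{equation*}
\pd_{\xi'}^{\alpha'}\bigl((\lambda\,d/d\lambda)^n(l_1l_2)\bigr)
=\sum_{\beta'\leq\alpha'}\binom{\alpha'}{\beta'}\sum_{k=0}^{n}\binom{n}{k}
\pd_{\xi'}^{\beta'}\bigl((\lambda\,d/d\lambda)^k l_1\bigr)\,
\pd_{\xi'}^{\alpha'-\beta'}\bigl((\lambda\,d/d\lambda)^{n-k} l_2\bigr).
\end{equation*}
Each factor is controlled by the type~1 hypothesis, so the term is bounded by a constant times $(|\lambda|^{1/2}+|\xi'|)^{r_1-|\beta'|}(|\lambda|^{1/2}+|\xi'|)^{r_2-|\alpha'-\beta'|}=(|\lambda|^{1/2}+|\xi'|)^{r_1+r_2-|\alpha'|}$, which is the desired type~1 estimate of order $r_1+r_2$. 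For (3) the identical expansion applies, each factor is estimated by the type~2 hypothesis, and one collects the powers $(|\lambda|^{1/2}+|\xi'|)^{r_1+r_2}|\xi'|^{-|\beta'|}|\xi'|^{-|\alpha'-\beta'|}=(|\lambda|^{1/2}+|\xi'|)^{r_1+r_2}|\xi'|^{-|\alpha'|}$.

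None of these steps presents a real obstacle; the argument is entirely bookkeeping, and the only mild subtlety is the double Leibniz expansion needed to handle the $\lambda\,d/d\lambda$ factor simultaneously with the $\xi'$-derivatives, which is why restricting to $n=0,1$ keeps the combinatorics trivial. The reduction of (2) to (3) via the inclusion $\BBM_{r,1}(\Lambda)\subset\BBM_{r,2}(\Lambda)$ is the only conceptual point worth emphasizing in the write-up.
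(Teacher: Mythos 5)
Your proof is correct: the paper itself gives no proof for this lemma (it cites \cite[Lemma 5.1]{SS12}), and the standard argument is exactly the double Leibniz expansion in $\pd_{\xi'}^{\alpha'}$ and $\lambda\,d/d\lambda$ that you carry out, using $|\beta'|+|\alpha'-\beta'|=|\alpha'|$ to recombine the exponents. The preliminary inclusion $\BBM_{r,1}(\Lambda)\subset\BBM_{r,2}(\Lambda)$, following from $(|\lambda|^{1/2}+|\xi'|)^{-|\alpha'|}\leq|\xi'|^{-|\alpha'|}$, is a clean way to reduce assertion (2) to assertion (3) and is a nice organizational touch, though one could equally well run the Leibniz estimate directly on the mixed type-1/type-2 product.
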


%%%%%%%%%%
\subsection{Characteristic roots}\label{subsec2-2}
Let $\mu_*$, $\nu_*$, and $\kappa_*$ be positive constants,
and let us define a polynomial $\CP(s)$ by
\begin{equation*}\label{defi:p_*}
\CP(s)=s^2-\frac{\mu_*+\nu_*}{\kappa_*}s+\frac{1}{\kappa_*}.
\end{equation*}
The roofs $s_\pm$ of $\CP(s)$ are then given by
\begin{equation*}
s_\pm=\left\{\begin{aligned}
&\frac{\mu_*+\nu_*}{2\kappa_*}\pm\sqrt{\eta_*} && (\eta_*\geq 0), \\
&\frac{\mu_*+\nu_*}{2\kappa_*}\pm i\sqrt{|\eta_*|} && (\eta_*<0),
\end{aligned}\right.
\end{equation*}
where $i=\sqrt{-1}$ and $\eta_*$ is given in \eqref{defi:eta_*}. 
Setting
\begin{equation*}
s_1=s_-, \quad s_2=s_+,
\end{equation*}
we have

\begin{lemm}\label{lemm:roots_s}
Let $\mu_*$, $\nu_*$, and $\kappa_*$ be positive constants.
Then the following assertions hold true:
\begin{enumerate}[$(1)$]
\item\label{lemm:roots_s1}
If $\mu_*$, $\nu_*$, and $\kappa_*$ satisfy the condition of Case I, then
$s_1$ and $s_2$ are imaginary numbers with $\Re s_j>0$ $(j=1,2)$. Especially, in Case I,
\begin{equation*}
s_1 \neq s_2, \quad s_1\neq \mu_*^{-1}, \quad s_2\neq \mu_*^{-1}.
\end{equation*}
\item\label{lemm:roots_s2}
If $\mu_*$, $\nu_*$, and $\kappa_*$ satisfy the condition of Case II, then
$s_1$ and $s_2$ are real numbers with $s_j>0$ $(j=1,2)$. Especially, in Case II,
\begin{equation*}
s_1 \neq s_2, \quad s_1\neq \mu_*^{-1}, \quad s_2\neq \mu_*^{-1}.
\end{equation*}
\item\label{lemm:roots_s3}
If $\mu_*$, $\nu_*$, and $\kappa_*$ satisfy the condition of Case III, then $\mu_*\neq \nu_*$.
Especially, in Case III,
\begin{align*}
(s_1,s_2)&=(\mu_*^{-1}, \nu_*^{-1})  \quad \text{when $\mu_*>\nu_*$}; \\
(s_1,s_2)&=(\nu_*^{-1},\mu_*^{-1}) \quad \text{when $\mu_*<\nu_*$}.
\end{align*}
\item\label{lemm:roots_s4}
If $\mu_*$, $\nu_*$, and $\kappa_*$ satisfy the condition of Case IV, then $\mu_*\neq \nu_*$.
Especially, in Case IV,
\begin{equation*}
s_1 = s_2=\frac{\mu_*+\nu_*}{2\kappa_*}, \quad s_2\neq \mu_*^{-1}.
\end{equation*}
\item\label{lemm:roots_s5}
If $\mu_*$, $\nu_*$, and $\kappa_*$ satisfy the condition of Case V, then $\mu_*=\nu_*$. 
Especially, in Case V,
\begin{equation*}
s_1 = s_2=\frac{\mu_*+\nu_*}{2\kappa_*}=\mu_*^{-1}.
\end{equation*}
\end{enumerate}
\end{lemm}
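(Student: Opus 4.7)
The proof will be a case-by-case verification, driven by two elementary observations that I would establish first.

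The first preliminary computation is evaluating $\CP$ at $\mu_*^{-1}$. A direct calculation gives
\begin{equation*}
\CP(\mu_*^{-1}) = \frac{1}{\mu_*^2} - \frac{\mu_*+\nu_*}{\kappa_*\mu_*} + \frac{1}{\kappa_*} = \frac{\kappa_* - \mu_*\nu_*}{\kappa_*\mu_*^2},
\end{equation*}
so $\mu_*^{-1}$ is a root of $\CP$ if and only if $\kappa_* = \mu_*\nu_*$. The second preliminary observation is Vieta's formulas: $s_1 s_2 = 1/\kappa_* > 0$ and $s_1 + s_2 = (\mu_*+\nu_*)/\kappa_* > 0$. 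Thus whenever $s_1,s_2$ are real they are automatically both positive, and whenever they are non-real complex conjugates their common real part equals $(\mu_*+\nu_*)/(2\kappa_*)>0$.

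With these two facts in hand, each case reduces to reading off the explicit formulas for $s_\pm$ and invoking the convention $s_1=s_-,\,s_2=s_+$. For Case I ($\eta_*<0$), $s_\pm$ are complex conjugates with nonzero imaginary parts $\pm\sqrt{|\eta_*|}$, giving $s_1\neq s_2$ and $\Re s_j>0$; neither is real, so neither can equal $\mu_*^{-1}$. For Case II ($\eta_*>0$, $\kappa_*\neq\mu_*\nu_*$), $s_\pm$ are real, distinct (since $\sqrt{\eta_*}\neq 0$), and positive by Vieta, and the preliminary computation rules out $\mu_*^{-1}$. For Case III ($\eta_*>0$, $\kappa_*=\mu_*\nu_*$) the first observation shows $\mu_*^{-1}$ is a root, and then Vieta forces the other root to be $\nu_*^{-1}$; the ordering $s_-<s_+$ then determines which is $s_1$ and which is $s_2$ depending on the sign of $\mu_*-\nu_*$, and the inequality $\mu_*\neq\nu_*$ is forced because $\mu_*=\nu_*$ together with $\kappa_*=\mu_*\nu_*$ would give $\eta_*=0$, contradicting $\eta_*>0$.

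Cases IV and V both have $\eta_*=0$, so $s_1=s_2=(\mu_*+\nu_*)/(2\kappa_*)$. In Case IV the hypothesis $\kappa_*\neq\mu_*\nu_*$ combined with the preliminary computation gives $s_2\neq\mu_*^{-1}$, and the inequality $\mu_*\neq\nu_*$ follows because $\mu_*=\nu_*$ would yield $\kappa_*=\mu_*^2=\mu_*\nu_*$. In Case V the preliminary computation says $\mu_*^{-1}$ is a root, and since the roots coincide we get $s_1=s_2=\mu_*^{-1}$; the equality $\mu_*=\nu_*$ follows from $\kappa_*=\mu_*\nu_*$ and $\eta_*=0$, which together read $(\mu_*+\nu_*)^2=4\mu_*\nu_*$, i.e.\ $(\mu_*-\nu_*)^2=0$.

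The main bookkeeping point, rather than a genuine obstacle, is simply the consistent use of the sign convention $s_1=s_-\leq s_+=s_2$ when the roots are real, to ensure the pairing in Case III is written correctly; otherwise the argument is entirely mechanical once the identity for $\CP(\mu_*^{-1})$ and Vieta's relations have been isolated.
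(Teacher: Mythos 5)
Your proof is correct. The paper states Lemma~\ref{lemm:roots_s} without an explicit proof, evidently treating it as an elementary verification from the quadratic formula and the definition of $\eta_*$; your argument fills this in cleanly, and the two preliminary observations you isolate (the identity $\CP(\mu_*^{-1})=(\kappa_*-\mu_*\nu_*)/(\kappa_*\mu_*^2)$ and Vieta's formulas) organize the five-way case analysis in exactly the way one would expect the authors to have intended.
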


Let us define a positive number $\vps_*$ by
\begin{equation*}%\label{angle_*}
\vps_*=\arg s_2 = \arg s_+\in [0,\pi/2).
\end{equation*}
One then sets
\begin{equation}\label{defi:t}
t_j=\sqrt{|\xi'|^2+ s_j\lambda} \quad \text{for $j=1,2$,}
\end{equation}
where $(\xi',\lambda)\in\BR^{N-1}\times \Sigma_\vps$ for $\vps\in(\vps_*,\pi/2)$.
Here we have chosen a branch cut along the negative real axis and a branch of the square root so that
$\Re\sqrt z>0$ for $z\in\BC\setminus(-\infty,0]$.
In addition, we set
\begin{equation}\label{defi:omega}
\omega_\lambda=\sqrt{|\xi'|^2+\mu_*^{-1}\lambda},
\end{equation}
where $(\xi',\lambda)\in\BR^{N-1}\times\Sigma_\vps$ for $\vps\in(0,\pi/2)$.
By Lemma \ref{lemm:roots_s}, we have

\begin{lemm}\label{lemm:roots_t}
Let $\mu_*$, $\nu_*$, and $\kappa_*$ be positive constants, and let $\xi'\in\BR^{N-1}$ and
$\lambda\in\Sigma_\vps$ for $\vps\in(\vps_*,\pi/2)$.
Then the following assertions hold true:
\begin{enumerate}[$(1)$]
\item\label{coro:roots_s12}
If $\mu_*$, $\nu_*$, and $\kappa_*$ satisfy the conditions of Case I or Case II, then
\begin{equation*}
t_1 \neq t_2, \quad t_1\neq \omega_\lambda, \quad t_2\neq \omega_\lambda.
\end{equation*}
\item\label{coro:roots_s3}
If $\mu_*$, $\nu_*$, and $\kappa_*$ satisfy the condition of Case III, then
\begin{align*}
t_1&=\omega_\lambda, \quad t_2=\sqrt{|\xi'|^2+\nu_*^{-1}\lambda} \quad \text{when $\mu_*>\nu_*$}; \\
t_2&=\omega_\lambda, \quad t_1=\sqrt{|\xi'|^2+\nu_*^{-1}\lambda} \quad \text{when $\mu_*<\nu_*$}.
\end{align*}
\item\label{coro:roots_s4}
If $\mu_*$, $\nu_*$, and $\kappa_*$ satisfy the condition of Case IV, then
\begin{equation*}
t_1 = t_2=\sqrt{|\xi'|^2+\left(\frac{\mu_*+\nu_*}{2\kappa_*}\right)\lambda}, \quad t_2\neq \omega_\lambda. 
\end{equation*}
\item\label{coro:roots_s5}
If $\mu_*$, $\nu_*$, and $\kappa_*$ satisfy the condition of Case V, then
\begin{equation*}
t_1 = t_2=\omega_\lambda.
\end{equation*}
\end{enumerate}
\end{lemm}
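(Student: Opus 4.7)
The plan is to deduce each assertion from the corresponding item of Lemma~\ref{lemm:roots_s} by exploiting the injectivity of the chosen square-root branch on the open right half-plane. The proof is essentially bookkeeping once the branches are verified to be well-defined.

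First, I would check that all the square roots in the statement make sense. Since $|\arg s_j|\leq\vps_*$ in every case (with $s_j$ positive real when $\eta_*\geq 0$), the hypothesis $\vps>\vps_*$ forces $\arg(s_j\lambda)\in(-\pi+\delta,\pi-\delta)$ for some $\delta>0$ whenever $\lambda\in\Sigma_\vps$, so $s_j\lambda$ stays in a proper sub-sector of $\BC\setminus(-\infty,0]$. Adding the non-negative real number $|\xi'|^2$ preserves this, and the same is true for $|\xi'|^2+\mu_*^{-1}\lambda$. The branch with $\Re\sqrt z>0$ then yields $t_1$, $t_2$, $\omega_\lambda$ each with strictly positive real part.

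Next, because any two complex numbers in the open right half-plane with equal squares must coincide (a sign flip would move one into the open left half-plane), equality of any two of $t_1$, $t_2$, $\omega_\lambda$ is equivalent to equality of their squares. Using $\lambda\neq 0$, this collapses every equality/non-equality among $t_1$, $t_2$, $\omega_\lambda$ to the corresponding one among $s_1$, $s_2$, $\mu_*^{-1}$:
\begin{equation*}
t_1=t_2\iff s_1=s_2,\qquad t_j=\omega_\lambda\iff s_j=\mu_*^{-1}\quad(j=1,2).
\end{equation*}

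With this reduction, each case becomes immediate from Lemma~\ref{lemm:roots_s}. For Cases I and II, parts~(1)--(2) of that lemma give $s_1\neq s_2$ and $s_j\neq\mu_*^{-1}$, yielding the three non-equalities of assertion~(1). For Case III, part~(3) gives $\{s_1,s_2\}=\{\mu_*^{-1},\nu_*^{-1}\}$ with the ordering dictated by the sign of $\mu_*-\nu_*$, and direct substitution in \eqref{defi:t} and \eqref{defi:omega} yields the formulas for $t_1$, $t_2$. For Case IV, $s_1=s_2=(\mu_*+\nu_*)/(2\kappa_*)$ gives $t_1=t_2$ by substitution, while $s_2\neq\mu_*^{-1}$ gives $t_2\neq\omega_\lambda$ via the reduction above. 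For Case V, $s_1=s_2=\mu_*^{-1}$ gives $t_1=t_2=\omega_\lambda$. The only step requiring any care is the branch-cut verification in the first paragraph; no substantial obstacle is expected.
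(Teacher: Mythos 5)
Your proof is correct and matches the paper's intent: the paper gives no explicit proof, simply introducing the lemma with ``By Lemma~\ref{lemm:roots_s}, we have,'' and your argument is precisely the bookkeeping that one-liner leaves implicit. The key observation you make explicit---that $\Re t_1,\Re t_2,\Re\omega_\lambda>0$ on the chosen branch, so equalities among them are equivalent to equalities of their squares, and hence (since $\lambda\neq 0$) to equalities among $s_1$, $s_2$, $\mu_*^{-1}$---is exactly the right reduction.
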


Let us define a polynomial $P_\lambda(t)$ by
\begin{equation}\label{eq:chara}
P_\lambda(t) 
=\lambda^2 -\lambda(\mu_*+\nu_*)(t^2-|\xi'|^2)
+\kappa_*(t^2-|\xi'|^2)^2,
\end{equation}
where $(\xi',\lambda)\in\times\BR^{N-1}\times \Sigma_\vps$ for $\vps\in(\vps_*,\pi/2)$.
Since 
\begin{align*}
P_\lambda(t)
&=\kappa_*\lambda^2\left\{\frac{1}{\kappa_*}-\left(\frac{\mu_*+\nu_*}{\kappa_*}\right)
\left(\frac{t^2-|\xi'|^2}{\lambda}\right)+\left(\frac{t^2-|\xi'|^2}{\lambda}\right)^2\right\} \\
&=\kappa_*\lambda^2 \CP\left(\frac{t^2-|\xi'|^2}{\lambda}\right),
\end{align*}
the four roots of $P_\lambda(t)$ are given by $\pm t_1$ and $\pm t_2$.
Hence, one has

\begin{lemm}\label{lemm:roots_P}
Let $\mu_*$, $\nu_*$, and $\kappa_*$ be positive constants, and let $\xi'\in\BR^{N-1}$ and
$\lambda\in\Sigma_\vps$ for $\vps\in(\vps_*,\pi/2)$.
Then the four roots of $P_\lambda(t)$ are given by $\pm t_1$ and $\pm t_2$.
Especially, $\Re t_1>0$ and $\Re t_2>0$.
%for any $(\xi',\lambda)\in\BR^{N-1}\times \Sigma_\vps$ with $\vps\in(\vps_*,\pi/2)$.
\end{lemm}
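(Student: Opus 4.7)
The plan is to combine the polynomial identity displayed immediately before the lemma with the branch-cut properties of the principal square root fixed in \eqref{defi:t}. Factoring $\CP(s)=(s-s_1)(s-s_2)$, the identity $P_\lambda(t)=\kappa_*\lambda^2\CP((t^2-|\xi'|^2)/\lambda)$ rewrites as
\[
P_\lambda(t)=\kappa_*\bigl(t^2-|\xi'|^2-s_1\lambda\bigr)\bigl(t^2-|\xi'|^2-s_2\lambda\bigr),
\]
so the zero set of $P_\lambda$ consists exactly of the square roots of $|\xi'|^2+s_j\lambda$ for $j=1,2$. Once I verify that each radicand lies in $\BC\setminus(-\infty,0]$, the principal branch produces two well-defined values $\pm t_j$ with $\Re t_j>0$; this simultaneously identifies the four roots as $\pm t_1,\pm t_2$ and yields the positivity claim.

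The key step is therefore to show that $|\xi'|^2+s_j\lambda\in\BC\setminus(-\infty,0]$ for every $\lambda\in\Sigma_\vps$ with $\vps>\vps_*$. By Lemma \ref{lemm:roots_s}, in each of the five cases $s_j$ lies in the closed right half-plane and satisfies $|\arg s_j|\leq\vps_*$: equality is attained only in Case I, where $s_1,s_2$ are complex conjugates with arguments $\mp\vps_*$, while in Cases II--V one has $\vps_*=0$ and $s_j>0$. Combined with $|\arg\lambda|<\pi-\vps$, this yields
\[
|\arg(s_j\lambda)|\leq|\arg s_j|+|\arg\lambda|<\vps_*+(\pi-\vps)<\pi,
\]
the last step using the standing assumption $\vps>\vps_*$. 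Hence $s_j\lambda\in\BC\setminus(-\infty,0]$.

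Adding the non-negative real quantity $|\xi'|^2$ preserves this property: if $\Im(s_j\lambda)\neq 0$, then $|\xi'|^2+s_j\lambda$ is non-real; if $\Im(s_j\lambda)=0$, then the previous argument bound forces $s_j\lambda>0$, so $|\xi'|^2+s_j\lambda>0$. In either sub-case the radicand avoids the branch cut, so $t_j$ is well-defined and $\Re t_j>0$. No serious obstacle arises; the only place where care is required is the uniform bound $|\arg s_j|\leq\vps_*$, which is precisely why $\vps_*$ was introduced and why the sector $\Sigma_\vps$ is taken with $\vps>\vps_*$.
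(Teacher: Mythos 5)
Your proof is correct and follows essentially the same route as the paper: the paper establishes the lemma via the identity $P_\lambda(t)=\kappa_*\lambda^2\,\CP\bigl((t^2-|\xi'|^2)/\lambda\bigr)$ displayed just before the lemma statement, and reads off the roots as $\pm t_j$. You supply the details the paper leaves implicit—explicitly factoring $\CP$, verifying that the radicand $|\xi'|^2+s_j\lambda$ avoids the branch cut $(-\infty,0]$ by the argument bound $|\arg s_j|\leq\vps_*<\vps$, and thereby justifying $\Re t_j>0$—which is a careful and accurate account of why the definition \eqref{defi:t} is well posed.
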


Similarly to \cite{Saito1}, we can prove

\begin{lemm}\label{lemm:symbol0}
Assume that $\mu_*$, $\nu_*$, and $\kappa_*$ are positive constants.
Let $\vps_1\in(\vps_*,\pi/2)$ and $\vps_2\in(0,\pi/2)$.
Then the following assertions holds true:
\begin{enumerate}[$(1)$]
\item
There exists a positive constant $C_{\vps_1,\mu_*,\nu_*,\kappa_*}$ such that
$$
\Re t_j \geq C_{\vps_1,\mu_*,\nu_*,\kappa_*}(|\lambda|^{1/2}+|\xi'|) \quad (j=1,2)
$$
for any $(\xi',\lambda)\in\BR^{N-1}\times \Sigma_{\vps_1}$.
\item
There exists a positive constant $C_{\vps_2,\mu_*}$ such that
$$
\Re\omega_\lambda \geq C_{\vps_2, \mu_*}(|\lambda|^{1/2}+|\xi'|)
$$
for any $(\xi',\lambda)\in\BR^{N-1}\times \Sigma_{\vps_2}$.
\item
Let $r\in\BR$ and $a> 0$. Then
\begin{align*}
t_1^r, t_2^r, (t_1+\omega_\lambda)^r, (t_2+\omega_\lambda)^r &\in\BBM_{r,1}(\Sigma_{\vps_1}), \\
\omega_\lambda^r&\in\BBM_{r,1}(\Sigma_{\vps_2}), \\
(|\xi'|^2+a\lambda)^r&\in\BBM_{2r,1}(\Sigma_{\vps_2}).
\end{align*}
\end{enumerate}
\end{lemm}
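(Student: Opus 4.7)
The plan is to first establish the lower bounds in (1) and (2) by a sector argument on the argument of $|\xi'|^2 + s_j\lambda$, and then bootstrap to the symbol estimates in (3) by chain-rule induction, mirroring the analogous proofs in \cite{Saito1}.

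For (1), note that Lemma \ref{lemm:roots_s} together with the definition $\vps_* = \arg s_+$ gives $|\arg s_j| \leq \vps_*$ for $j=1,2$ (in Case I, $s_1 = \overline{s_2}$; in the remaining cases $\vps_* = 0$ and $s_j > 0$). For $\lambda \in \Sigma_{\vps_1}$ one has $|\arg \lambda| < \pi - \vps_1$, so setting $\delta := \vps_1 - \vps_* > 0$ yields $|\arg(s_j \lambda)| \leq \pi - \delta$ and hence $\cos(\arg(s_j\lambda)) \geq -\cos\delta$. The elementary inequality $a^2 - 2ab\cos\delta + b^2 \geq (1-\cos\delta)(a^2 + b^2)$ applied with $a = |\xi'|^2$, $b = |s_j||\lambda|$ then gives
\begin{equation*}
||\xi'|^2 + s_j\lambda|^2 \geq (1-\cos\delta)\bigl(|\xi'|^4 + |s_j|^2|\lambda|^2\bigr) \geq C(|\xi'| + |\lambda|^{1/2})^4.
\end{equation*}
Since adding the nonnegative real number $|\xi'|^2$ to $s_j\lambda$ cannot increase the absolute argument, $|\arg(|\xi'|^2 + s_j\lambda)| \leq \pi - \delta$, and therefore $|\arg t_j| \leq (\pi - \delta)/2$, which in turn gives
\begin{equation*}
\Re t_j = |t_j|\cos(\arg t_j) \geq |t_j|\sin(\delta/2) \geq C_{\vps_1,\mu_*,\nu_*,\kappa_*}(|\xi'| + |\lambda|^{1/2}).
\end{equation*}
Assertion (2) is the same argument with $s_j$ replaced by the positive real number $\mu_*^{-1}$ (so $\vps_* = 0$ is admissible) and $\delta := \vps_2$.

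For (3), I would argue by induction on $|\alpha'|$ starting from the identities
\begin{equation*}
t_j^2 = |\xi'|^2 + s_j\lambda, \qquad \partial_{\xi_k} t_j = \xi_k/t_j, \qquad \lambda\partial_\lambda t_j = s_j\lambda/(2t_j),
\end{equation*}
which show inductively that $\partial_{\xi'}^{\alpha'}((\lambda\partial_\lambda)^n t_j^r)$ is a finite linear combination of terms $P(\xi')\lambda^p\, t_j^{r - 2p - |\alpha'|}$ where $P$ is a monomial of degree bounded by $|\alpha'|$ and the total homogeneity satisfies $2p + \deg P - |\alpha'| \geq 0$. Combining the lower bound $|t_j| \geq C(|\xi'| + |\lambda|^{1/2})$ from (1) with $|\xi_k| \leq |\xi'| + |\lambda|^{1/2}$ and $|\lambda| \leq (|\xi'| + |\lambda|^{1/2})^2$ collapses each term to $C(|\xi'| + |\lambda|^{1/2})^{r - |\alpha'|}$, proving $t_j^r \in \BBM_{r,1}(\Sigma_{\vps_1})$. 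The same scheme handles $\omega_\lambda^r$ via (2), gives $(|\xi'|^2 + a\lambda)^r = \bigl(\sqrt{|\xi'|^2 + a\lambda}\bigr)^{2r} \in \BBM_{2r,1}(\Sigma_{\vps_2})$ for any $a > 0$, and treats $(t_j + \omega_\lambda)^r$ using $\partial_{\xi_k}(t_j + \omega_\lambda) = \xi_k(t_j^{-1} + \omega_\lambda^{-1})$, $\lambda\partial_\lambda(t_j + \omega_\lambda) = s_j\lambda/(2t_j) + \mu_*^{-1}\lambda/(2\omega_\lambda)$, together with the bound $|t_j + \omega_\lambda| \geq \Re t_j + \Re\omega_\lambda \geq C(|\xi'| + |\lambda|^{1/2})$ from (1) and (2).

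The main obstacle is the sector bookkeeping in (1): one must verify that the hypothesis $\vps_1 > \vps_*$ is precisely what keeps $\arg(s_j\lambda)$ bounded away from $\pm\pi$ uniformly over $\lambda \in \Sigma_{\vps_1}$, whereas in (2) no restriction beyond $\vps_2 < \pi/2$ is required because $\mu_*^{-1}$ is positive real. Once this geometric input is in place, the symbol estimates in (3) reduce to routine chain-rule bookkeeping; the only subtlety is keeping track of the type index, which here is always $1$ because the lower bounds on $|t_j|$ and $|\omega_\lambda|$ involve $|\lambda|^{1/2} + |\xi'|$ rather than just $|\xi'|$.
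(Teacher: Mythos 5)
Your argument is correct and follows the route the paper implicitly relies on (the paper itself only refers the proof of Lemma~\ref{lemm:symbol0} to \cite{Saito1}): a sector estimate on $\arg(s_j\lambda)$ combined with $|\arg s_j|\le\vps_*$ yields (1)--(2), and a homogeneity count with the Leibniz rule yields (3). One small bookkeeping slip in (3): after differentiation the general term should be of the form $\xi'^{\beta'}\lambda^p t_j^{s}$ with $|\beta'|+2p+s=r-|\alpha'|$ (your displayed exponent $r-2p-|\alpha'|$ together with $\deg P\le|\alpha'|$ is not homogeneous of degree $r-|\alpha'|$ unless $\deg P=0$), and when $s>0$ you also need the trivial upper bound $|t_j|\le C(|\lambda|^{1/2}+|\xi'|)$ alongside the lower bound from (1); neither point changes the conclusion.
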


%%%%%%%%%%
\subsection{Technical lemmas}\label{subsec2-3}
Let us introduce the following notation: for $x_N>0$,
\begin{equation}\label{stkernel:1}
\CM_0(x_N)=\frac{e^{-t_2 x_N}-e^{-t_1 x_N}}{t_2-t_1}, \quad 
\CM_j(x_N)=\frac{e^{-t_j x_N}-e^{-\omega_\lambda x_N}}{t_2-t_1} \quad (j=1,2)
\end{equation}
when $t_2\neq t_1$; 
\begin{equation}\label{stkernel:2}
\CM(x_N)=\frac{e^{-t_2 x_N}-e^{-\omega_\lambda x_N}}{t_2-\omega_\lambda}
\end{equation}
when $t_2\neq \omega_\lambda$.
In addition, we define the partial Fourier transform with respect to $x'=(x_1,\dots,x_{N-1})$ and its inverse transform by
\begin{align}
&\wht u = \wht u(x_N) =\wht u(\xi',x_N) =\int_{\BR^{N-1}}e^{-ix'\cdot\xi'} u(x',x_N)\intd x', \label{defi:PFT} \\
&\CF_{\xi'}^{-1}[v(\xi',x_N)](x') = \frac{1}{(2\pi)^{N-1}}\int_{\BR^{N-1}}e^{ix'\cdot\xi'} v(\xi',x_N)\intd \xi', \label{defi:IPFT}
\end{align}
respectively. 
Similarly to \cite{Saito1}, one then has the following three lemmas:

\begin{lemm}\label{lemm:multiplier1}
Let $q\in(1,\infty)$, and let $t_j$ $(j=1,2)$ and $\omega_\lambda$ be respectively given by \eqref{defi:t} and \eqref{defi:omega}
for positive constants $\mu_*$, $\nu_*$, and $\kappa_*$. %By the following symbols: 
Assume
$$
k(\xi',\lambda)\in\BBM_{-1,1}(\BC_+), \quad l(\xi',\lambda)\in\BBM_{0,1}(\BC_+),
$$
and set for $x=(x',x_N)\in\BR_+^N$
%we define operators $K_m(\lambda)$ and $L_m(\lambda)$ $(m=0,1,2)$ as
\begin{align*}
[K_0(\lambda) f](x)&=\CF_{\xi'}^{-1}\left[k(\xi',\lambda) e^{-\omega_\lambda x_N}\wht f(\xi',0)\right](x'), \\
[L_0(\lambda) f](x)&=\CF_{\xi'}^{-1}\left[l(\xi',\lambda) e^{-\omega_\lambda x_N}\wht f(\xi',0)\right](x'), \\
[K_j(\lambda) f](x)&=\CF_{\xi'}^{-1}\left[k(\xi',\lambda) e^{-t_j x_N}\wht f(\xi',0)\right](x') \quad (j=1,2), \\
[L_j(\lambda) f](x)&=\CF_{\xi'}^{-1}\left[l(\xi',\lambda) e^{-t_j x_N}\wht f(\xi',0)\right](x') \quad (j=1,2), 
\end{align*}
with $\lambda\in\BC_+$ and $ f\in H_q^2(\BR_+^N)$. Then the following assertions hold true:
\begin{enumerate}[$(1)$]
\item
For $j=0,1,2$ and $\lambda\in\BC_+$, there are operators $\wtd K_j(\lambda)$, with
$$
\wtd K_j(\lambda)\in\hlm(\BC_+,\CL(L_q(\BR_+^N)^{N^2+N+1},H_q^3(\BR_+^N))),
$$
such that for any $f\in H_q^2(\BR_+^N)$
$$
K_j(\lambda) f=\wtd K_j(\lambda)(\nabla^2 f,\lambda^{1/2}\nabla f,\lambda f).
$$
In addition, for $j=0,1,2$ and $n=0,1$,
$$
\CR_{\CL(L_q(\BR_+^N)^{N^2+N+1},\FA_q^0(\BR_+^N))}
\left(\left\{\left.\left(\lambda \frac{d}{d\lambda}\right)^n\left(\CS_\lambda^0\wtd K_j(\lambda)\right)\right|\lambda\in\BC_+\right\}\right) \leq C,
$$
with some positive constant $C$ depending solely on $N$, $q$, $\mu_*$, $\nu_*$, and $\kappa_*$. 
Here, $\FA_q^0(\BR_+^N)$ and $\CS_\lambda^0$ are given in \eqref{defi:AB} for $G=\BR_+^N$.
\item
For $j=0,1,2$ and $\lambda\in\BC_+$, there are operators $\wtd L_j(\lambda)$, with
$$
\wtd L_j(\lambda)\in\hlm(\BC_+,\CL(L_q(\BR_+^N)^{N^2+N+1},H_q^2(\BR_+^N))),
$$
such that for any $f\in H_q^2(\BR_+^N)$
$$
L_j(\lambda)f =\wtd L_j(\lambda)(\nabla^2 f,\lambda^{1/2}\nabla f,\lambda f).
$$
In addition, for $j=0,1,2$ and $n=0,1$,
$$
\CR_{\CL(L_q(\BR_+^N)^{N^2+N+1})}
\left(\left\{\left.\left(\lambda \frac{d}{d\lambda}\right)^n\left(\CT_\lambda\wtd L_j(\lambda)\right)\right|\lambda\in\BC_+\right\}\right) \leq C,
$$
with some positive constant $C$ depending solely on $N$, $q$, $\mu_*$, $\nu_*$, and $\kappa_*$.
Here, $\CT_\lambda$ are given in \eqref{defi:AB}. 
\end{enumerate}
\end{lemm}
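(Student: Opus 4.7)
The plan is to follow the Volevich trick to reduce the boundary-value operators $K_j(\lambda)$ and $L_j(\lambda)$ to volume-type Fourier multiplier operators on the half-space, and then to invoke the standard $\CR$-boundedness results for such kernels, in the spirit of \cite{Saito1, SS12}. First I would observe that for any $f \in H_q^2(\BR_+^N)$, using the fundamental theorem of calculus together with decay at $y_N = \infty$,
\begin{equation*}
e^{-A x_N}\wht f(\xi',0)
= A\int_0^\infty e^{-A(x_N+y_N)}\wht f(\xi',y_N)\intd y_N
- \int_0^\infty e^{-A(x_N+y_N)}\pd_N\wht f(\xi',y_N)\intd y_N
\end{equation*}
for $A\in\{t_1,t_2,\omega_\lambda\}$. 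Iterating this identity once more (and/or using the algebraic relation $A^2=|\xi'|^2+c_A\lambda$), the trace $\wht f(\xi',0)$ is rewritten as a finite sum of integrals of the form
\begin{equation*}
\int_0^\infty A^a\,(i\xi')^{\beta}\,e^{-A(x_N+y_N)}\,\pd_N^c\wht f(\xi',y_N)\intd y_N
\end{equation*}
with nonnegative integers $a,|\beta|,c$ satisfying $a+|\beta|+c=2$ (counting a $\lambda$-factor with weight $2$), so that each such integrand is naturally paired with exactly one component of $\nabla^2 f$, $\lambda^{1/2}\nabla f$, or $\lambda f$. This identifies the candidate operators $\wtd K_j(\lambda)$ and $\wtd L_j(\lambda)$, which one then defines on arbitrary inputs $(F_1,F_2,F_3)\in L_q(\BR_+^N)^{N^2+N+1}$ by the same formulas.

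To obtain the symbol estimates needed for $\CR$-boundedness, I would next apply the differential operators encoded in $\CS_\lambda^0$ or $\CT_\lambda$, together with at most one $\lambda\,d/d\lambda$, to each of these expressions. Tangential derivatives $\pd_j$ with $j<N$ bring down factors $i\xi_j$, while $\pd_N$ acting on $e^{-A x_N}$ contributes a factor $-A$; the $\lambda$-derivatives are handled via Lemmas \ref{lemm:algebra} and \ref{lemm:symbol0}, which, combined with the lower bounds $\Re t_j,\Re\omega_\lambda\geq c(|\lambda|^{1/2}+|\xi'|)$, guarantee that every remaining factor of the form $t_j^r$, $\omega_\lambda^r$, $(t_j+\omega_\lambda)^{-1}$, etc., lies in the expected class $\BBM_{r,1}(\BC_+)$. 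A careful accounting of the powers then shows that in every summand the full $\xi'$-symbol multiplying the half-space kernel $e^{-A(x_N+y_N)}$ belongs to $\BBM_{-2,1}(\BC_+)$ or $\BBM_{-2,2}(\BC_+)$.

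The $\CR$-boundedness of the resulting operator family then follows from the standard multiplier theorem for half-space kernels of the form
\begin{equation*}
g\mapsto \CF_{\xi'}^{-1}\left[m(\xi',\lambda)\int_0^\infty e^{-A(x_N+y_N)}\wht g(\xi',y_N)\intd y_N\right](x')
\end{equation*}
with $m\in\BBM_{-2,1}(\BC_+)\cup\BBM_{-2,2}(\BC_+)$ and $\Re A\geq c(|\lambda|^{1/2}+|\xi'|)$, exactly as used in \cite{Saito1, SS12}. The main obstacle is the bookkeeping for these symbol classes: verifying that after the Volevich reduction together with the action of $(\lambda\,d/d\lambda)^n\CS_\lambda^0$ or $(\lambda\,d/d\lambda)^n\CT_\lambda$ every resulting symbol still sits in $\BBM_{-2,\cdot}(\BC_+)$ uniformly, and that the final $\CR$-bound depends only on $N$, $q$, $\mu_*$, $\nu_*$, $\kappa_*$, with no residual dependence on auxiliary parameters such as $\vps$.
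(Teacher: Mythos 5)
Your Volevich-trick plan is exactly the strategy the paper relies on for this lemma (by reference to \cite{Saito1} and \cite{SS12}), and it is carried out explicitly in the paper's appendix for the closely analogous Lemma \ref{lemm:multi-inhom}. One small slip in your symbol accounting: for the $L_j$ operators, where $l\in\BBM_{0,1}(\BC_+)$, the reduced symbols paired with $(\nabla^2 f,\lambda^{1/2}\nabla f,\lambda f)$ land in $\BBM_{-1,\cdot}(\BC_+)$ rather than $\BBM_{-2,\cdot}(\BC_+)$, which is precisely why these operators gain only two derivatives (an $\CR$-bound for $\CT_\lambda\wtd L_j(\lambda)$, not $\CS_\lambda^0\wtd L_j(\lambda)$), matching the statement and the split between parts (2) and (3) of Lemma \ref{lemm:A2_2}.
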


\begin{lemm}\label{lemm:multiplier2}
Let $q\in(1,\infty)$, and let $t_j$ $(j=1,2)$ and $\omega_\lambda$ be respectively given by \eqref{defi:t} and \eqref{defi:omega}
for positive constants $\mu_*$, $\nu_*$, and $\kappa_*$ satisfying the conditions of Case I or Case II. 
%By the following symbols: 
Assume
$$
m_0(\xi',\lambda)\in\BBM_{0,1}(\BC_+), \quad m_1(\xi',\lambda),m_2(\xi',\lambda)\in\BBM_{1,1}(\BC_+),
$$
and set for $x=(x',x_N)\in\BR_+^N$
%we define operators $M_k(\lambda)$ $(k=0,1,2)$ as
$$
[M_k(\lambda) f](x)=\CF_{\xi'}^{-1}\left[m_k(\xi',\lambda) \CM_k(x_N)\wht f(\xi',0)\right](x'), 
$$
with $\lambda\in\BC_+$ and $ f\in H_q^2(\BR_+^N)$. Then the following assertions hold true:
\begin{enumerate}[$(1)$]
\item
For $\lambda\in\BC_+$, there is an operator $\wtd M_0(\lambda)$, with
$$
\wtd M_0(\lambda)\in\hlm(\BC_+,L_q(\BR_+^N)^{N^2+N+1},H_q^3(\BR_+^N)),
$$
such that for any $f\in H_q^2(\BR_+^N)$
$$
M_0(\lambda) f=\wtd M_0(\lambda)(\nabla^2 f,\lambda^{1/2}\nabla f,\lambda f).
$$
In addition, for $n=0,1$,
$$
\CR_{\CL(L_q(\BR_+^N)^{N^2+N+1},\FA_q^0(\BR_+^N))}
\left(\left\{\left.\left(\lambda \frac{d}{d\lambda}\right)^n\left(\CS_\lambda^0\wtd M_0(\lambda)\right)\right|\lambda\in\BC_+\right\}\right) \leq C,
$$
with some positive constant $C$ depending solely on $N$, $q$, $\mu_*$, $\nu_*$, and $\kappa_*$.
%Here, $\FA_q^0(\BR_+^N)$ and $\CS_\lambda^0$ are given in \eqref{defi:AB} for $G=\BR_+^N$.
\item
For $j=1,2$ and $\lambda\in\BC_+$, there are operators $\wtd M_j(\lambda)$, with
$$
\wtd M_j(\lambda)\in\hlm(\BC_+,\CL(L_q(\BR_+^N)^{N^2+N+1},H_q^2(\BR_+^N))),
$$
such that for any $f\in H_q^2(\BR_+^N)$
$$
M_j(\lambda)f = \wtd M_j(\lambda)(\nabla^2 f,\lambda^{1/2}\nabla f,\lambda f).
$$
In addition, for $j=1,2$ and $n=0,1$,
$$
\CR_{\CL(L_q(\BR_+^N)^{N^2+N+1})}
\left(\left\{\left.\left(\lambda \frac{d}{d\lambda}\right)^n\left(\CT_\lambda\wtd M_j(\lambda)\right)\right|\lambda\in\BC_+\right\}\right) \leq C,
$$
with some positive constant $C$ depending solely on $N$, $q$, $\mu_*$, $\nu_*$, and $\kappa_*$.
%Here, $\CT_\lambda$ are given in \eqref{defi:AB}. 
\end{enumerate}
\end{lemm}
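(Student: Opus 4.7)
The plan is to reduce $\CM_0$ and $\CM_j$ to integral averages of single exponentials $e^{-T x_N}$, absorb the ostensibly singular factor $1/(t_2-t_1)$ using an algebraic identity available precisely in Cases I and II, and then apply the Fourier-multiplier machinery already established in Lemma \ref{lemm:multiplier1}.

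First, I would rewrite the kernels via the fundamental theorem of calculus along the segment from $t_1$ to $t_2$ (respectively from $\omega_\lambda$ to $t_j$) in $\BC$:
\begin{align*}
\CM_0(x_N) &= -x_N\int_0^1 e^{-T_0(\theta)x_N}\intd\theta, \quad T_0(\theta)=(1-\theta)t_1+\theta t_2, \\
\CM_j(x_N) &= -x_N\cdot\frac{t_j-\omega_\lambda}{t_2-t_1}\int_0^1 e^{-T_j(\theta)x_N}\intd\theta, \quad T_j(\theta)=(1-\theta)\omega_\lambda+\theta t_j.
\end{align*}
By Lemma \ref{lemm:symbol0} and convexity of the real part, $\Re T_k(\theta)\geq c(|\lambda|^{1/2}+|\xi'|)$ uniformly in $\theta\in[0,1]$, and $T_k(\theta)\in\BBM_{1,1}(\BC_+)$ with $\theta$-independent bounds (since $\BBM_{1,1}$ is a vector space and $t_1,t_2,\omega_\lambda$ lie in it).

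Next, the troublesome ratio $(t_j-\omega_\lambda)/(t_2-t_1)$ is handled algebraically: from $t_j^2-\omega_\lambda^2=(s_j-\mu_*^{-1})\lambda$ and $t_2^2-t_1^2=(s_2-s_1)\lambda$ one obtains
\begin{equation*}
\frac{t_j-\omega_\lambda}{t_2-t_1}=\frac{s_j-\mu_*^{-1}}{s_2-s_1}\cdot\frac{t_1+t_2}{t_j+\omega_\lambda}.
\end{equation*}
In Cases I and II, Lemma \ref{lemm:roots_s} guarantees $s_2\neq s_1$ and $s_j\neq\mu_*^{-1}$, while Lemma \ref{lemm:symbol0} shows $(t_1+t_2)/(t_j+\omega_\lambda)\in\BBM_{0,1}(\BC_+)$. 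Hence $\gamma_j:=(t_j-\omega_\lambda)/(t_2-t_1)$ is a bounded zero-order symbol.

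Substituting these representations into $M_k(\lambda)f$ and interchanging the $\theta$-integral with the inverse Fourier transform by Fubini yields
\begin{equation*}
M_0(\lambda)f=-\int_0^1\CF_{\xi'}^{-1}\left[m_0(\xi',\lambda)\,x_N\,e^{-T_0(\theta)x_N}\wht f(\xi',0)\right]\intd\theta,
\end{equation*}
and the analogous expression for $M_j(\lambda)f$ with $m_j$ replaced by $m_j\gamma_j\in\BBM_{1,1}(\BC_+)$. For each fixed $\theta\in[0,1]$, the inner operator is of the same type as those in Lemma \ref{lemm:multiplier1}, with $T_k(\theta)$ taking the role of $\omega_\lambda$ or $t_j$. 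The extra factor $x_N$ accounts for a gain of one order in regularity: this is why $m_0\in\BBM_{0,1}$ produces $H_q^3$-output in part (1), paralleling $k\in\BBM_{-1,1}$ in Lemma \ref{lemm:multiplier1}(1), and similarly for part (2). Running the argument of Lemma \ref{lemm:multiplier1} (modelled on \cite{Saito1}) gives $\CR$-bounds uniform in $\theta$, because every constant depends only on the symbol-class bounds of the multipliers and on the lower bound for $\Re T_k(\theta)$, all uniform in $\theta$. The $\CR$-boundedness of $M_k(\lambda)$ follows by the standard stability of $\CR$-bounds under convex integration in a parameter.

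The main obstacle is the algebraic identity that converts the apparently singular factor $1/(t_2-t_1)$ into a bounded zero-order multiplier. This identity is exactly what restricts the lemma to Cases I and II; the remaining cases, where $s_1=s_2$ or $s_j=\mu_*^{-1}$, must be treated by dedicated reductions carried out later in the paper. Beyond that, the only technical subtlety is the careful differentiation book-keeping needed to verify that the $x_N$ factor correctly compensates for the higher order of $m_k$ when one takes three (resp. two) derivatives to estimate the $H_q^3$ (resp. $H_q^2$) norm.
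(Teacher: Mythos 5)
Your approach is correct and is the standard one for the difference--quotient kernels $\CM_0,\CM_j$; the paper itself only refers to \cite{Saito1} for the proof, and the reduction you use --- fundamental theorem of calculus along the chord joining the two exponents, plus an algebraic identity to tame the factor $1/(t_2-t_1)$ --- is exactly the technique employed there. The identity
$\frac{t_j-\omega_\lambda}{t_2-t_1}=\frac{s_j-\mu_*^{-1}}{s_2-s_1}\cdot\frac{t_1+t_2}{t_j+\omega_\lambda}$
is verified directly from $t_j^2-\omega_\lambda^2=(s_j-\mu_*^{-1})\lambda$ and $t_2^2-t_1^2=(s_2-s_1)\lambda$, and Lemma \ref{lemm:roots_s} guarantees $s_1\neq s_2$ and $s_j\neq\mu_*^{-1}$ in Cases I and II, so the scalar prefactor is a finite constant. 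Two points should be tightened. First, Lemma \ref{lemm:symbol0}(3) does not literally list $(t_1+t_2)^{-1}$; to conclude $(t_1+t_2)/(t_j+\omega_\lambda)\in\BBM_{0,1}(\BC_+)$ it is cleaner to split $\frac{t_1+t_2}{t_j+\omega_\lambda}=\frac{t_1}{t_j+\omega_\lambda}+\frac{t_2}{t_j+\omega_\lambda}$ and invoke Lemmas \ref{lemm:symbol0}(3) and \ref{lemm:algebra}, or else note $\Re(t_1+t_2)\geq c(|\lambda|^{1/2}+|\xi'|)$ and repeat the Bell's-formula argument of Corollary \ref{coro:lopatinski}. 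Second, after the $\theta$-parametrisation the inner operators carry the factor $x_N$ in the kernel, so they are not \emph{literally} instances of Lemma \ref{lemm:multiplier1}; what you actually need is the boundary-trace analogue of the $x_N e^{-Tx_N}$ machinery developed in the appendix for Lemma \ref{lemm:multi-inhom}, run with the exponent $T_k(\theta)$. Since $T_k(\theta)$ is an affine combination of symbols in $\BBM_{1,1}(\BC_+)$ and satisfies $\Re T_k(\theta)\geq c(|\lambda|^{1/2}+|\xi'|)$ uniformly in $\theta\in[0,1]$ by convexity of the real part, every constant in that machinery is uniform in $\theta$, and averaging over $\theta$ preserves the $\CR$-bound as you assert. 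With those two clarifications the argument is complete.
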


\begin{lemm}\label{lemm:multiplier3}
Let $q\in(1,\infty)$, and let $t_j$ $(j=1,2)$ and $\omega_\lambda$ be respectively given by \eqref{defi:t} and \eqref{defi:omega}
for positive constants $\mu_*$, $\nu_*$, and $\kappa_*$ satisfying one of the following conditions:
\begin{itemize}
\item
the condition of Case III and $\mu_*>\nu_*$;
\item
the condition of Case IV.
\end{itemize}
%By the following symbols: 
Assume
$$
u(\xi',\lambda)\in\BBM_{0,1}(\BC_+), \quad v(\xi',\lambda)\in\BBM_{1,1}(\BC_+),
$$
and set for $x=(x',x_N)\in\BR_+^N$
%we define operators $U(\lambda)$ and $V(\lambda)$ as
\begin{align*}
[U(\lambda) f](x)&=\CF_{\xi'}^{-1}\left[u(\xi',\lambda) \CM(x_N)\wht f(\xi',0)\right](x'),  \\
[V(\lambda) f](x)&=\CF_{\xi'}^{-1}\left[v(\xi',\lambda) \CM(x_N)\wht f(\xi',0)\right](x'), 
\end{align*}
with $\lambda\in\BC_+$ and $ f\in H_q^2(\BR_+^N)$. Then the following assertions hold true:
\begin{enumerate}[$(1)$]
\item
For $\lambda\in\BC_+$, there is an operator $\wtd U(\lambda)$, with
$$
\wtd U(\lambda)\in\hlm(\BC_+,L_q(\BR_+^N)^{N^2+N+1},H_q^3(\BR_+^N)),
$$
such that for any $f\in H_q^2(\BR_+^N)$
$$
U(\lambda) f=\wtd U(\lambda)(\nabla^2 f,\lambda^{1/2}\nabla f,\lambda f).
$$
In addition, for $n=0,1$,
$$
\CR_{\CL(L_q(\BR_+^N)^{N^2+N+1},\FA_q^0(\BR_+^N))}
\left(\left\{\left.\left(\lambda \frac{d}{d\lambda}\right)^n\left(\CS_\lambda^0\wtd U(\lambda)\right)\right|\lambda\in\BC_+\right\}\right) \leq C,
$$
with some positive constant $C$ depending solely on $N$, $q$, $\mu_*$, $\nu_*$, and $\kappa_*$.
%Here $\FA_q^0(\BR_+^N)$ and $\CS_\lambda^0$ are given in \eqref{defi:AB} for $G=\BR_+^N$.
\item
For $\lambda\in\BC_+$, there is an operator $\wtd V(\lambda)$, with
$$
\wtd V(\lambda)\in\hlm(\BC_+,\CL(L_q(\BR_+^N)^{N^2+N+1},H_q^2(\BR_+^N))),
$$
such that for any $f\in H_q^2(\BR_+^N)$
$$
V(\lambda)f = \wtd V(\lambda)(\nabla^2 f,\lambda^{1/2}\nabla f,\lambda f).
$$
In addition, for $n=0,1$,
$$
\CR_{\CL(L_q(\BR_+^N)^{N^2+N+1})}
\left(\left\{\left.\left(\lambda \frac{d}{d\lambda}\right)^n\left(\CT_\lambda\wtd V(\lambda)\right)\right|\lambda\in\BC_+\right\}\right) \leq C,
$$
with some positive constant $C$ depending solely on $N$, $q$, $\mu_*$, $\nu_*$, and $\kappa_*$.
%Here, $\CT_\lambda$ are given in \eqref{defi:AB}. 
\end{enumerate}
\end{lemm}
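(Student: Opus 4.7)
The plan is to reduce Lemma \ref{lemm:multiplier3} to a parameter-integrated version of Lemma \ref{lemm:multiplier1}, using the elementary integral identity
\begin{equation*}
\CM(x_N)
=\frac{e^{-t_2 x_N}-e^{-\omega_\lambda x_N}}{t_2-\omega_\lambda}
=-x_N\int_0^1 e^{-A(\tau)x_N}\intd\tau,
\qquad A(\tau):=(1-\tau)\omega_\lambda+\tau\, t_2,
\end{equation*}
which follows from $\frac{d}{d\tau}e^{-A(\tau)x_N}=-(t_2-\omega_\lambda)x_N e^{-A(\tau)x_N}$. The point of this rewriting is that it avoids dividing by $t_2-\omega_\lambda$, whose inverse is not a multiplier in $\BBM_{r,j}(\BC_+)$. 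Under the hypotheses of Case III with $\mu_*>\nu_*$, or Case IV, Lemma \ref{lemm:symbol0} together with Lemma \ref{lemm:algebra}(1) gives $A(\tau)\in\BBM_{1,1}(\BC_+)$ and the uniform lower bound $\Re A(\tau)\geq c(|\lambda|^{1/2}+|\xi'|)$ for all $\tau\in[0,1]$, with $c>0$ independent of $\tau$.

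Next I would analyze, for each fixed $\tau\in[0,1]$, the auxiliary operator
\begin{equation*}
[U_\tau(\lambda)f](x)
=\CF_{\xi'}^{-1}\bigl[u(\xi',\lambda)\, x_N\, e^{-A(\tau)x_N}\wht f(\xi',0)\bigr](x'),
\end{equation*}
and analogously $V_\tau(\lambda)$. Since $A(\tau)$ plays the role of $\omega_\lambda$ in Lemma \ref{lemm:multiplier1} (same multiplier class, same type of lower bound on the real part), and since the extra factor $x_N$ produces, when $\pd_N$ falls on $x_N e^{-A(\tau)x_N}$ or when the $L_q(\BR_+^N)$ norm in $x_N$ is taken, exactly one extra power of $A(\tau)^{-1}$, the kernel $u\cdot x_N e^{-A(\tau)x_N}$ behaves in $L_q$ estimates as if it were $\bigl(u/A(\tau)\bigr)\,e^{-A(\tau)x_N}$ with $u/A(\tau)\in\BBM_{-1,1}(\BC_+)$. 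Consequently the argument used to establish Lemma \ref{lemm:multiplier1}(1) (Weis' operator-valued Fourier multiplier theorem applied after the standard rewriting $f\mapsto (\nabla^2 f,\lambda^{1/2}\nabla f,\lambda f)$, as done in \cite{Saito1}) yields an operator $\wtd U_\tau(\lambda)$ satisfying the required $\CR$-bound with a constant independent of $\tau$. The same reasoning applied to $v\in\BBM_{1,1}$, where $v/A(\tau)\in\BBM_{0,1}$, gives Lemma \ref{lemm:multiplier3}(2) via the analogue of Lemma \ref{lemm:multiplier1}(2).

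Finally, set
\begin{equation*}
\wtd U(\lambda):=-\int_0^1 \wtd U_\tau(\lambda)\intd\tau,
\qquad
\wtd V(\lambda):=-\int_0^1 \wtd V_\tau(\lambda)\intd\tau.
\end{equation*}
Holomorphy in $\lambda\in\BC_+$ is inherited from each $\wtd U_\tau(\lambda)$. To conclude the $\CR$-bounds, I would apply the standard ``contraction'' principle for $\CR$-bounded families: if $\{T(\tau,\lambda)\}_{\lambda\in\BC_+}\subset\CL(X,Y)$ is $\CR$-bounded uniformly in $\tau\in[0,1]$, then so is $\{\int_0^1 T(\tau,\lambda)\intd\tau\}_{\lambda\in\BC_+}$, with $\CR$-bound dominated by the supremum over $\tau$ of the individual $\CR$-bounds (this follows from Fubini and Kahane's inequality, Remark \ref{rema:Kahane}(2)). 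The same passes through for the $\lambda\frac{d}{d\lambda}$-derivative since differentiation in $\lambda$ commutes with the $\tau$-integral. The main technical obstacle is the careful verification that the weight $x_N$ indeed contributes exactly one order of smoothness uniformly in $\tau$; this requires redoing, for the kernel $x_N e^{-A(\tau)x_N}$, the kernel estimates underlying Lemma \ref{lemm:multiplier1}, but thanks to the uniform positivity $\Re A(\tau)\geq c(|\lambda|^{1/2}+|\xi'|)$ the computation is essentially identical to those in \cite{Saito1}.
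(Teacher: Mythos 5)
Your proposal reproduces the standard $\tau$-integral representation $\CM(x_N)=-x_N\int_0^1 e^{-A(\tau)x_N}\intd\tau$ with $A(\tau)=(1-\tau)\omega_\lambda+\tau t_2$ that underlies the proof in \cite{Saito1}, to which the paper defers for this lemma, and the ingredients you identify (uniform membership of $A(\tau)$ in $\BBM_{1,1}(\BC_+)$ with uniform lower bound on $\Re A(\tau)$, plus Minkowski-type averaging of $\CR$-bounds over $\tau$) are the correct ones. The only caveat is that after the $\tau$-reduction the precise technical tool is an $A(\tau)$-version of the appendix's Lemma \ref{lemm:multi-inhom} for the $x_N$-weighted boundary kernel rather than Lemma \ref{lemm:multiplier1} itself; your closing paragraph correctly flags this as the verification that remains, and it does go through because the kernel estimates there depend only on the symbol class and the lower bound on the real part.
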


In the last part of this subsection,
we introduce fundamental properties of the $\CR$-boundedness as follows (cf. e.g. \cite[Proposition 3.4]{DHP03}):

\begin{lemm}\label{prop:R}
Let $X$, $Y$, and $Z$ be Banach spaces.
Then the following assertions hold true:
\begin{enumerate}[$(1)$]
\item
Let $\CT$ and $\CS$ be $\CR$-bounded families on $\CL(X,Y)$.
Then $\CT+\CS=\{T+S \mid T\in\CT, S\in\CS\}$ is also $\CR$-bounded on $\CL(X,Y)$,
and 
$$\CR_{\CL(X,Y)}(\CT+\CS)\leq \CR_{\CL(X,Y)}(\CT)+\CR_{\CL(X,Y)}(\CS).$$
\item
Let $\CT$ and $\CS$ be $\CR$-bounded families
on $\CL(X,Y)$ and on $\CL(Y,Z)$, respectively.
Then $\CS\CT=\{ST \mid S\in\CS, T\in\CT\}$ is also $\CR$-bounded on $\CL(X,Z)$,
and 
$$\CR_{\CL(X,Z)}(\CS\CT)\leq \CR_{\CL(X,Y)}(\CT)\CR_{\CL(Y,Z)}(\CS).$$
\end{enumerate}
\end{lemm}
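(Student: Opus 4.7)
The plan is to verify both assertions directly from Definition \ref{defi:R} by applying the triangle inequality in $L_p([0,1];Y)$ (resp.~$L_p([0,1];Z)$) and exploiting linearity of Rademacher sums. Fix $p\in[1,\infty)$ witnessing the $\CR$-boundedness of $\CT$ and $\CS$; by Remark \ref{rema:Kahane}(2) the choice of $p$ is immaterial.

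For assertion (1), take $m\in\BN$, operators $T_j+S_j\in\CT+\CS$ with $T_j\in\CT$, $S_j\in\CS$, vectors $\{f_j\}_{j=1}^m\subset X$, and a Rademacher sequence $\{r_j\}_{j=1}^m$. Writing
$$\sum_{j=1}^m r_j(u)(T_j+S_j)f_j=\sum_{j=1}^m r_j(u)T_jf_j+\sum_{j=1}^m r_j(u)S_jf_j$$
in $Y$ and applying Minkowski's inequality in $L_p([0,1];Y)$ splits the relevant norm into two pieces, each of which is controlled by the $\CR$-bound of $\CT$ (resp.~$\CS$) times the common quantity $(\int_0^1\|\sum_j r_j(u)f_j\|_X^p\intd u)^{1/p}$. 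Summing the two bounds yields $\CR_{\CL(X,Y)}(\CT+\CS)\leq \CR_{\CL(X,Y)}(\CT)+\CR_{\CL(X,Y)}(\CS)$.

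For assertion (2), take $S_jT_j\in\CS\CT$ with $T_j\in\CT$, $S_j\in\CS$, and $\{f_j\}_{j=1}^m\subset X$. Set $g_j:=T_jf_j\in Y$, so that $\sum_j r_j(u)S_jT_jf_j=\sum_j r_j(u)S_jg_j$. Applying first the $\CR$-boundedness of $\CS$ with the vectors $\{g_j\}\subset Y$ gives
$$\Bigl(\int_0^1\Bigl\|\sum_{j=1}^m r_j(u)S_jT_jf_j\Bigr\|_Z^p\intd u\Bigr)^{1/p}\leq \CR_{\CL(Y,Z)}(\CS)\Bigl(\int_0^1\Bigl\|\sum_{j=1}^m r_j(u)T_jf_j\Bigr\|_Y^p\intd u\Bigr)^{1/p},$$
and then applying the $\CR$-boundedness of $\CT$ to the right-hand side gives the stated product bound.

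There is no genuine obstacle: both parts are essentially bookkeeping around the definition. The only point that requires a word of care is the well-definedness of the constant $p$ (handled by Kahane's inequality as recorded in Remark \ref{rema:Kahane}) and the fact that in (2) the intermediate vectors $g_j=T_jf_j$ are allowed to be arbitrary elements of $Y$ in the definition of $\CR$-boundedness of $\CS$, which is immediate.
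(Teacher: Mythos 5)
Your argument is correct, and it is the standard direct verification from the definition: Minkowski's inequality in $L_p([0,1];Y)$ for the sum, and a two-step application of the definition with the intermediate vectors $g_j=T_jf_j$ for the composition. You are also right that the only technical wrinkle is aligning the exponent $p$ across the two families, which Remark \ref{rema:Kahane}(2) (Kahane's inequality) disposes of. Note, however, that the paper itself does not prove this lemma; it simply records the statement and refers the reader to \cite[Proposition 3.4]{DHP03}, so there is no ``paper's proof'' to compare against. Your proof is essentially the argument one finds in that reference.
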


%%%%%%%%%%
\subsection{A reduced system of \eqref{eq1:gen-res}}\label{subsec2-4}
This subsection reduces the system \eqref{eq1:gen-res} to $(d,\Bf)=(0,0)$.
To this end, we start with the following problem on the whole space:
\begin{equation}\label{eq:whole}
\left\{\begin{aligned}
\lambda\rho + \dv\Bu &= d && \text{in $\BR^N$,} \\
\lambda\Bu - \mu_*\Delta\Bu -\nu_*\nabla\dv\Bu
-\kappa_*\nabla\Delta\rho &= \Bf && \text{in $\BR^N$.}
\end{aligned}\right.
\end{equation}
Recall that $\CX_q^1(\BR^N)$, $\FX_q^1(\BR^N)$, $\CF_\lambda^1$,
$\FA_q^0(\BR^N)$, $\CS_\lambda^0$, $\FB_q(\BR^N)$, and $\CT_\lambda$
are symbols defined in Subsection \ref{subsec1-2} for $G=\BR^N$.
Concerning the system \eqref{eq:whole}, one has

\begin{theo}\label{theo:whole}
Let $q\in(1,\infty)$ and assume that $\mu_*$, $\nu_*$, and $\kappa_*$ are positive constants.
Then, for any $\lambda\in\BC_+$, there exist operators $\CA^1(\lambda)$ and $\CB^1(\lambda)$, with
\begin{align*}
\CA^1(\lambda)&\in\hlm(\BC_+,\CL(\FX_q^1(\BR^N),H_q^3(\BR^N))), \\
\CB^1(\lambda)&\in\hlm(\BC_+,\CL(\FX_q^2(\BR^N),H_q^2(\BR^N)^N)),
\end{align*}
such that, for any $\BF^1=(d,\Bf)\in\CX_q^1(\BR^N)$,
$(\rho,\Bu)=(\CA^1(\lambda)\CF_\lambda^1\BF^1,\CB^1(\lambda)\CF_\lambda^1\BF^1)$ is a unique solution to the system \eqref{eq:whole}.
In addition, for $n=0,1$,
\begin{align*}
\CR_{\CL(\FX_q^1(\BR^N),\FA_q^0(\BR^N))}\left(\left\{\left.\left(\lambda\frac{d}{d\lambda}\right)^n
\left(\CS_\lambda^0\CA^1(\lambda)\right) \right| \lambda\in\BC_+\right\}\right)
&\leq C_{N,q,\mu_*,\nu_*,\kappa_*}, \\
\CR_{\CL(\FX_q^1(\BR^N),\FB_q(\BR^N))}\left(\left\{\left.\left(\lambda\frac{d}{d\lambda}\right)^n
\left(\CT_\lambda\CB^1(\lambda)\right) \right| \lambda\in\BC_+\right\}\right)
&\leq C_{N,q,\mu_*,\nu_*,\kappa_*}, 
\end{align*}
with a positive constant $C_{N,q,\mu_*,\nu_*,\kappa_*}$.
\end{theo}

\begin{proof}
The proof is similar to \cite{Saito1}, so that the detailed proof may be omitted.
\end{proof}

One now reduces the system \eqref{eq1:gen-res} to $(d,\Bf)=(0,0)$ by using Theorem \ref{theo:whole}.
For functions $f=f(x)$ with $x=(x',x_N)\in\BR_+^N$,
let $E^e f$ and $E^o f$ be respectively the even extension of $f$ and the odd extension of $f$, i.e.
\begin{align*}
E^e f&=(E^e f)(x)=
\left\{\begin{aligned}
&f(x',x_N)  && (x_N>0), \\
&f(x',-x_N) && (x_N<0),
\end{aligned}\right. \\
E^o f&=(E^o f)(x)=
\left\{\begin{aligned}
&f(x',x_N)  && (x_N>0), \\
&-f(x',-x_N) && (x_N<0).
\end{aligned}\right.
\end{align*}
In addition, we set for $\Bf=(f_1,\dots,f_N)^\SST$ defined on $\BR_+^N$
\begin{equation*}
\BE\Bf=(E^e f_1,\dots, E^e f_{N-1},E^o f_N)^\SST.
\end{equation*}
Note that  $E^e\in \CL(H_q^1(\BR_+^N),H_q^1(\BR^N))$ and $\BE\in\CL(L_q(\BR_+^N)^N,L_q(\BR^N)^N)$.

Let $\CA^1(\lambda)$ and $\CB^1(\lambda)$ be the operators constructed in Theorem \ref{theo:whole},
and set for $(d,\Bf)\in H_q^1(\BR_+^N)\times L_q(\BR_+^N)^N$ 
\begin{equation*}
R=\CA^1(\lambda)\CF_\lambda^1(E^e d,\BE\Bf), \quad
\BU=\CB^1(\lambda)\CF_\lambda^1(E^e d,\BE\Bf).
\end{equation*}
Furthermore, let us define $S=S(x',x_N)$ and $\BV=\BV(x',x_N)$ as
\begin{equation*}
S = R(x',-x_N), \quad
\BV =(U_1(x',-x_N),\dots,U_{N-1}(x',-x_N),-U_N(x',-x_N))^\SST.
\end{equation*}
Here and subsequently,
$U_J$ and $V_J$ denote for $J=1,\dots,N$ the $J$th component of $\BU$
and the $J$th component of $\BV$, respectively.
It then holds that 
\begin{align*}
&(\lambda S +\dv\BV)(x',x_N) \\
&=(\lambda R+\dv\BU)(x',-x_N) = (E^e d)(x',-x_N)=(E^e d)(x',x_N).
\end{align*}
Analogously, for $j=1,\dots,N-1$,
\begin{align*}
(\lambda V_j-\mu_*\Delta V_j-\nu_*\pd_j\dv\BV-\kappa_*\pd_j\Delta S)(x',x_N) 
&=(E^e f_j)(x',x_N), \\
(\lambda V_N-\mu_*\Delta V_N-\nu_*\pd_N\dv\BV-\kappa_*\pd_N\Delta S)(x',x_N) 
&=(E^o f_N)(x',x_N).
\end{align*}
The uniqueness of solutions of \eqref{eq:whole} then implies $\BU(x',x_N)=\BV(x',x_N)$.
Setting $x_N=0$ in this equality yields $U_N(x',0)=0$.

Let $\rho=R+\wtd\rho$ and $\Bu=\BU+\wtd\Bu$ in \eqref{eq1:gen-res}.
One then achieves, by $U_N=0$ on $\BR_0^N$ mentioned above, the following reduced system:
\begin{equation}\label{eq1:half-red}
\left\{\begin{aligned}
\lambda\wtd\rho+\dv\wtd\Bu&=0 && \text{in $\BR_+^N$,} \\
\lambda\wtd\Bu-\mu_*\Delta\wtd\Bu-\nu_*\nabla\dv\wtd\Bu-\kappa_*\nabla\Delta\wtd\rho&=0 && \text{in $\BR_+^N$,} \\
\Bn\cdot\nabla\wtd\rho=\wtd g, \quad \wtd u_j=\wtd h_j, \quad \wtd u_N&=0 && \text{on $\BR_0^N$,} 
\end{aligned}\right.
\end{equation}
for $j=1,\dots,N-1$ and 
\begin{align}\label{defi:tilde-g}
&\wtd g =g-\Bn\cdot\nabla\CA^1(\lambda)\CF_\lambda^1(E^e d,\BE\Bf)=g+\pd_N\CA^1(\lambda)\CF_\lambda^1(E^e d,\BE\Bf), \\
&\wtd h_j =-(\CB^1(\lambda)\CF_\lambda^1(E^e d,\BE\Bf))_j, \notag 
\end{align}
where $(\Bv)_j$ denotes the $j$th component of $\Bv$.

In view of \eqref{eq1:half-red}, let us consider
\begin{equation}\label{eq2:half-red}
\left\{\begin{aligned}
\lambda\rho+\dv\Bu&=0 && \text{in $\BR_+^N$,} \\
\lambda u_J-\mu_*\Delta u_J-\nu_*\pd_J\dv\Bu-\kappa_*\pd_J\Delta\rho&=0 && \text{in $\BR_+^N$,} \\
\Bn\cdot\nabla\rho= g, \quad u_j= h_j, \quad u_N&=0 && \text{on $\BR_0^N$,} 
\end{aligned}\right.
\end{equation}
where $J=1,\dots,N$ and $j=1,\dots,N-1$,
for given $g\in H_q^2(\BR_+^N)$ and $h_j\in H_q^2(\BR_+^N)$.
The main part of the proof of Theorem \ref{theo:main} is to show
\begin{theo}\label{theo:half-red}
Let $q\in(1,\infty)$ and set
\begin{align*}
&\CY_q(\BR_+^N)=H_q^2(\BR_+^N)^N, \quad \FY_q(\BR_+^N)=L_q(\BR_+^N)^{N(N^2+N+1)}, \\
&\CG_\lambda(g, h_1,\dots, h_{N-1})=(\CT_\lambda  g,\CT_\lambda  h_1,\dots,\CT_\lambda h_{N-1}),
\end{align*}
for $(g, h_1,\dots, h_{N-1})\in\CY_q(\BR_+^N)$.
Assume that $\mu_*$, $\nu_*$, and $\kappa_*$ are positive constants.
Then, for any $\lambda\in\BC_+$, there are operators $\CA^2(\lambda)$ and $\CB^2(\lambda)$, with
\begin{align*}
\CA^2(\lambda) & \in \hlm(\BC_+,\CL(\FY_q(\BR_+^N),H_q^3(\BR_+^N))), \\
\CB^2(\lambda) & \in \hlm(\BC_+,\CL(\FY_q(\BR_+^N),H_q^2(\BR_+^N)^N)),
\end{align*}
such that, for any $\BG=(g,h_1,\dots,h_{N-1})\in\CY_q(\BR_+^N)$,
$$
(\rho,\Bu)=(\CA^2(\lambda)\CF_\lambda^2\BG,\CB^2(\lambda)\CF_\lambda^2\BG)
$$
is a unique solution to the system \eqref{eq2:half-red}.
In addition, for $n=0,1$,
\begin{align*}
\CR_{\CL(\FY_q(\BR_+^N),\FA_q^0(\BR_+^N))}
\left(\left\{\left.\left(\lambda\frac{d}{d\lambda}\right)^n
\left(\CS_\lambda^0\CA^2(\lambda)\right) \right| \lambda\in\BC_+\right\}\right)
&\leq C_{N,q,\mu_*,\nu_*,\kappa_*}, \\
\CR_{\CL(\FY_q(\BR_+^N),\FB_q(\BR_+^N))}
\left(\left\{\left.\left(\lambda\frac{d}{d\lambda}\right)^n\left(\CT_\lambda\CB^2(\lambda)\right) \right| \lambda\in\BC_+\right\}\right)
&\leq C_{N,q,\mu_*,\nu_*,\kappa_*}, 
\end{align*}
with a positive constant $C_{N,q,\mu_*,\nu_*,\kappa_*}$.
\end{theo}

In the remaining part of this subsection,
we prove Theorem \ref{theo:main}, assuming that Theorem \ref{theo:half-red} holds.

Let $\BF=(d,\Bf,g)\in \CX_q(\BR_+^N)$.
Noting $\nabla E^e d = \BE\nabla d$, we see $\CF_\lambda^1(E^e d,\BE\Bf) = (\BE\nabla d,E^e(\lambda^{1/2}d),\BE\Bf)$.
In view of this relation and \eqref{defi:tilde-g}, one sets,
for\footnote{$H_1$, $H_2$, $H_3$, $H_4$, $H_5$, and $H_6$ are variables 
corresponding to $\nabla d$, $\lambda^{1/2}d$, $\Bf$, $\nabla^2 g$, $\lambda^{1/2} \nabla g$, and $\lambda g$, respectively.
} $\BH=(H_1,\dots,H_6)\in\FX_q^2(\BR_+^N)$ and $\CZ\in\{\CA,\CB\}$,
\begin{align*}
\CZ(\lambda)\BH 
&=\CZ^1(\lambda)(\BE H_1,E^e H_2, \BE H_3) \\
& + \CZ^2(\lambda)\Big(H_4+\nabla^2\pd_N\CA^1(\lambda)(\BE H_1,E^e H_2, \BE H_3), \\
&H_5+\lambda^{1/2}\nabla\pd_N\CA^1(\lambda)(\BE H_1,E^e H_2, \BE H_3), \\
&H_6+\lambda\pd_N\CA^1(\lambda)(\BE H_1,E^e H_2, \BE H_3), \\
&-\CT_\lambda(\CB^1(\lambda)(\BE H_1,E^e H_2, \BE H_3))_1,\dots, \\
&-\CT_\lambda(\CB^1(\lambda)(\BE H_1,E^e H_2, \BE H_3))_{N-1}\Big).
\end{align*}
It is then clear that 
$(\rho,\Bu)=(\CA(\lambda)\CF_\lambda^2\BF,\CB(\lambda)\CF_\lambda^2\BF)$ is a solution to \eqref{eq1:gen-res},
and also $\CA(\lambda)$ and $\CB(\lambda)$ satisfy the estimates required in Theorem \ref{theo:main} 
by Lemma \ref{prop:R} and Theorems \ref{theo:whole} and \ref{theo:half-red}.
The uniqueness of solutions of \eqref{eq1:gen-res} can be proved similarly to \cite{Saito1}.
This completes the proof of Theorem \ref{theo:main}.

\begin{rema}
The following sections are devoted to the proof of Theorem \ref{theo:half-red},
and the proof is divided into Cases I, II, III, IV, and V.
\end{rema}

%%%%%%%%%%%%%%%%%%%%%%%%%%%%%%%%%%%%%%%%%%%%%%%%%%%%%%%%%%%%%%%%%%%%%%
\section{Proof of Theorem \ref{theo:half-red} for Cases I and II}\label{sec3}
This section proves Theorem \ref{theo:half-red} for Cases I and II.
Throughout this section, we assume that $\mu_*$, $\nu_*$, and $\kappa_*$ are positive constants
satisfying the conditions of Case I or Case II.
One then recalls Lemma \ref{lemm:roots_t} \eqref{coro:roots_s12}, i.e.
\begin{equation*}\label{condi:I-II}
t_1\neq t_2, \quad t_1 \neq \omega_\lambda, \quad t_2\neq \omega_\lambda,
\end{equation*}
which are often used in the following computations. 
Let $J=1,\dots,N$ and $j=1,\dots,N-1$ in this section.

%%%%%%%%%%%%%%%%%%%%
\subsection{Solution formulas}\label{subsec3-1}

Set $\vph=\dv\Bu$.
Applying the partial Fourier transform given by \eqref{defi:PFT} to the system \eqref{eq2:half-red} yields
the ordinary differential equations: 
\begin{align}
\lambda \wht \rho +\wht \vph &=0, \text{ $x_N>0$,} \label{eq:1} \\ 
\lambda \wht u_j-\mu_*(\pd_N^2-|\xi'|^2) \wht u_j-\nu_* i\xi_j \wht\vph
-\kappa_*i\xi_j(\pd_N^2-|\xi'|^2)\wht\rho&=0, \text{ $x_N>0$,} \label{eq:2} \\
\lambda \wht u_N-\mu_*(\pd_N^2-|\xi'|^2)\wht u_N-\nu_*\pd_N\wht\vph
-\kappa_*\pd_N(\pd_N^2-|\xi'|^2)\wht\rho&=0,  \text{ $x_N>0$,} \label{eq:3}
\end{align}
with the boundary conditions: 
\begin{align}
&\pd_N\wht \rho(0)=-\wht g(0), \label{eq:5} \\
&\wht u_j(0) =\wht h_j(0), \quad \wht u_N(0)=0. \label{eq:4} 
\end{align}
%Let us assume $\xi' \neq 0$ and $\lambda\in\Sigma_\vps$ for $\vps\in(\vps_*,\pi/2)$ in what follows,
%where $\vps_*$ is defined in \eqref{angle_*}.
One inserts \eqref{eq:1} into \eqref{eq:2}, \eqref{eq:3}, and \eqref{eq:5}, and then 
\begin{align}
\lambda^2\wht u_j-\lambda\mu_*(\pd_N^2-|\xi'|^2)\wht u_j
-i\xi_j\{\lambda\nu_* -\kappa_*(\pd_N^2-|\xi'|^2)\}\wht \vph &=0,\text{ $x_N>0$,} \label{eq:41} \\
\lambda^2\wht u_N-\lambda\mu_*(\pd_N^2-|\xi'|^2)\wht u_N
-\pd_N\{\lambda\nu_* -\kappa_*(\pd_N^2-|\xi'|^2)\}\wht \vph &=0, \text{ $x_N>0$,} \label{eq:42} \\
\pd_N\wht \vph(0) &=\lambda\wht g(0). \label{eq:44}
\end{align}
Multiplying \eqref{eq:41} by $i\xi_j$ and applying $\pd_N$ to \eqref{eq:42},
we sum the resultant equations in order to obtain
\begin{equation*}
\lambda^2\wht \vph-\lambda(\mu_*+\nu_*) (\pd_N^2-|\xi'|^2)\wht \vph 
+\kappa_*(\pd_N^2-|\xi'|^2)^2\wht \vph=0, \text{ $x_N>0$,}
\end{equation*}
where we have used the fact that $\wht\vph=\sum_{j=1}^{N-1}i\xi_j\wht u_j+\pd_N\wht u_N$.
By $P_\lambda(t)$ given in \eqref{eq:chara}, the last equation is written as
\begin{equation}\label{170821_1}
P_\lambda(\pd_N)\wht \vph =0.
\end{equation}
On the other hand, \eqref{eq:41} and \eqref{eq:42} are respectively equivalent to
\begin{align}
\mu_*\lambda(\pd_N^2-\omega_\lambda^2)\wht u_j
+i\xi_j\{\nu_*\lambda-\kappa_*(\pd_N^2-|\xi'|^2)\}\wht \vph=0, \text{ $x_N>0$,} \label{1023_18:eq5} \\
\mu_*\lambda(\pd_N^2-\omega_\lambda^2)\wht u_N
+\pd_N\{\nu_*\lambda-\kappa_*(\pd_N^2-|\xi'|^2)\}\wht \vph=0, \text{ $x_N>0$.} \label{1103_18:eq5}
\end{align}
Applying $P_\lambda(\pd_N)$ to \eqref{1023_18:eq5} and \eqref{1103_18:eq5} then furnishes by \eqref{170821_1}
\begin{equation}\label{160827_1}
(\pd_N^2-\omega_\lambda^2)P_\lambda(\pd_N) \wht u_J =0. 
\end{equation}

\begin{rema}
In what follows, we solve equations \eqref{170821_1}-\eqref{160827_1} with boundary conditions \eqref{eq:4} and \eqref{eq:44}
with respect to  $\wht u_J$ and $\wht\vph$ under the following constraint:
\begin{equation}\label{eq:div-vph}
\wht\vph=\sum_{j=1}^{N-1}i\xi_j\wht u_j+\pd_N\wht u_N, \quad x_N>0.
\end{equation}
\end{rema}

In view of \eqref{170821_1}, \eqref{160827_1}, and Lemma \ref{lemm:roots_P}, 
we look for solutions $\wht u_J$ and $\wht\vph$ of the forms:
\begin{align}
\wht u_J 
&= \alpha_J e^{-\omega_\lambda x_N} + \beta_J(e^{-t_1 x_N}-e^{-\omega_\lambda x_N})
+ \gamma_J(e^{-t_2 x_N}-e^{-\omega_\lambda x_N}), \label{eq:9_0} \\
\wht\vph
&=\sigma e^{-t_1 x_N}+\tau e^{-t_2 x_N}. \label{eq:9}
\end{align}
It then holds by \eqref{eq:div-vph} that
\begin{align}
i\xi'\cdot\alpha'-i\xi'\cdot\beta'-i\xi'\cdot\gamma'
-\omega_\lambda\alpha_N+\omega_\lambda\beta_N+\omega_\lambda\gamma_N =0, \label{eq:10} \\
\sigma=i\xi'\cdot\beta'-t_1\beta_N, \quad \tau=i\xi'\cdot\gamma'-t_2\gamma_N, \label{eq:10_2}
\end{align}
%\begin{align}
%%\pd_N \wht u_J
%%&=\big(-\omega_\lambda\al_J+\omega_\lambda\beta_J+\omega_\lambda\ga_J\big)e^{-\omega_\lambda x_N} \label{eq:8} \\
%%&-t_1\beta_J e^{-t_1 x_N}-t_2\ga_J e^{-t_2 x_N}, \notag \\
%\wht \vph
%&=\big(i\xi'\cdot\alpha'-i\xi'\cdot\beta'-i\xi'\cdot\gamma'-\omega_\lambda\alpha_N
%+\omega_\lambda\beta_N+\omega_\lambda\gamma_N\big)e^{-\omega_\lambda x_N} \label{eq:9} \\
%&+\big(i\xi'\cdot\beta'-t_1\beta_N\big)e^{-t_1 x_N} 
%+ \big(i\xi'\cdot\gamma' -t_2\gamma_N \big)e^{-t_2 x_N}, \notag
%\end{align}
where $i\xi'\cdot a'=\sum_{j=1}^{N-1}i\xi_j a_j$ for $a\in \{\alpha,\beta,\gamma\}$.
On the other hand, inserting \eqref{eq:9_0} and \eqref{eq:9} into \eqref{1023_18:eq5} and \eqref{1103_18:eq5} yields
%\begin{align}
%\mu_*\lambda\beta_j(t_1^2-\omega_\lambda^2)+i\xi_j(i\xi'\cdot\beta'-t_1\beta_N)
%\{\nu_*\lambda-\kappa_*(t_1^2-|\xi'|^2)\}&=0, \label{eq:11} \\
%\mu_*\lambda\gamma_j(t_2^2-\omega_\lambda^2)+i\xi_j(i\xi'\cdot\gamma'-t_2\gamma_N)
%\{\nu_*\lambda-\kappa_*(t_2^2-|\xi'|^2)\}&=0, \label{eq:12} \\
%\mu_*\lambda\beta_N(t_1^2-\omega_\lambda^2)-t_1(i\xi'\cdot\beta'-t_1\beta_N)
%\{\nu_*\lambda-\kappa_*(t_1^2-|\xi'|^2)\}&=0, \label{eq:13} \\
%\mu_*\lambda\gamma_N(t_2^2-\omega_\lambda^2)-t_2(i\xi'\cdot\gamma'-t_2\gamma_N)
%\{\nu_*\lambda-\kappa_*(t_2^2-|\xi'|^2)\}&=0. \label{eq:14}
%\end{align}
\begin{align*}
\mu_*\lambda\beta_j(t_1^2-\omega_\lambda^2)+i\xi_j \sigma \{\nu_*\lambda-\kappa_*(t_1^2-|\xi'|^2)\}&=0, \\
\mu_*\lambda\gamma_j(t_2^2-\omega_\lambda^2)+i\xi_j\tau\{\nu_*\lambda-\kappa_*(t_2^2-|\xi'|^2)\}&=0,  \\
\mu_*\lambda\beta_N(t_1^2-\omega_\lambda^2)-t_1\sigma \{\nu_*\lambda-\kappa_*(t_1^2-|\xi'|^2)\}&=0,  \\
\mu_*\lambda\gamma_N(t_2^2-\omega_\lambda^2)-t_2\tau\{\nu_*\lambda-\kappa_*(t_2^2-|\xi'|^2)\}&=0. 
\end{align*}
%Especially, by \eqref{eq:9} and \eqref{eq:10}
%\begin{equation}\label{eq:50}
%\wht \vph = (i\xi'\cdot\beta'-t_1\beta_N)e^{-t_1 x_N}+(i\xi'\cdot\gamma'-t_2\gamma_N)e^{-t_2 x_N}. 
%\end{equation}
By these relations, we have
\begin{equation*}
\mu_*\lambda(t_1^2-\omega_\lambda^2)\left(\beta_j+\frac{i\xi_j}{t_1}\beta_N\right) = 0,\quad
\mu_*\lambda(t_2^2-\omega_\lambda^2)\left(\gamma_j+\frac{i\xi_j}{t_2}\gamma_N\right)=0.
\end{equation*}
Since $t_1\neq \omega _\lambda$ and $t_2\neq \omega_\lambda$, the last two equations imply 
\begin{equation}\label{eq:51}
\beta_j = -\frac{i\xi_j}{t_1}\beta_N, \quad \gamma_j = -\frac{i\xi_j}{t_2}\gamma_N.
\end{equation}
These relations yield 
\begin{equation}\label{eq:60_2}
i\xi'\cdot\beta'=\frac{|\xi'|^2}{t_1}\beta_N, \quad i\xi' \cdot\gamma'=\frac{|\xi'|^2}{t_2}\gamma_N,
\end{equation}
and thus
\begin{equation}\label{eq:60}
i\xi'\cdot\beta'-t_1\beta_N = -\left(\frac{t_1^2-|\xi'|^2}{t_1}\right)\beta_N, \quad
i\xi'\cdot\gamma'-t_2\gamma_N = -\left(\frac{t_2^2-|\xi'|^2}{t_2}\right)\gamma_N.
\end{equation}
%\begin{align}
%i\xi'\cdot\beta'-t_1\beta_N &= -\left(\frac{t_1^2-|\xi'|^2}{t_1}\right)\beta_N, \label{eq:60} \\
%i\xi'\cdot\gamma'-t_2\gamma_N &= -\left(\frac{t_2^2-|\xi'|^2}{t_2}\right)\gamma_N. \label{eq:61} 
%\end{align}

Next, we consider the boundary conditions. By \eqref{eq:4} and \eqref{eq:9_0}, 
\begin{equation}
\alpha_j = \wht h_j(0), \quad \alpha_N =0. \label{eq:55}
\end{equation}
%\begin{align}
%&\alpha_j = \wht h_j(0), \quad \alpha_N =0, \label{eq:55} \\
%&-t_1\big(i\xi'\cdot\beta'-t_1\beta_N\big)-t_2\big(i\xi'\cdot\gamma' -t_2\gamma_N \big)
%= \lambda\wht g(0). \label{eq:56}
%\end{align}
It then holds by the first relation of \eqref{eq:55} that 
\begin{equation}\label{eq:59}
i\xi'\cdot\alpha'=i\xi'\cdot\wht \Bh'(0), \quad \wht\Bh'(0)=(\wht h_1(0),\dots,\wht h_{N-1}(0))^\SST.
\end{equation}
On the other hand, by \eqref{eq:44} and \eqref{eq:9},
$$
-t_1\sigma-t_2\tau = \lambda\wht g(0),
$$
which, combined with \eqref{eq:10_2} and \eqref{eq:60}, furnishes
\begin{equation}\label{eq:57}
(t_1^2-|\xi'|^2)\beta_N+(t_2^2-|\xi'|^2)\gamma_N=\lambda\wht g(0).
\end{equation}

One now derives simultaneous equations with respect to $\beta_N$ and $\gamma_N$.
Inserting \eqref{eq:60_2}, \eqref{eq:59}, and $\alpha_N=0$ of \eqref{eq:55} into \eqref{eq:10} furnishes
\begin{align*}
i\xi'\cdot\wht\Bh'(0)-t_1^{-1}|\xi'|^2\beta_N-t_2^{-1}|\xi'|^2\gamma_N+\omega_\lambda\beta_N+\omega_\lambda\gamma_N=0.
\end{align*}
Hence,
\begin{equation*}
-t_2(t_1\omega_\lambda-|\xi'|^2)\beta_N-t_1(t_2\omega_\lambda-|\xi'|^2)\gamma_N=t_1t_2i\xi'\cdot\wht\Bh'(0),
\end{equation*}
which, combined with \eqref{eq:57}, yields
\begin{align}\label{SEs}
&\BL 
\begin{pmatrix}
\beta_N \\ \gamma_N
\end{pmatrix}
=
\begin{pmatrix}
\lambda \wht g(0) \\
t_1t_2i\xi'\cdot\wht\Bh'(0)
\end{pmatrix}, \\
&\BL
=\begin{pmatrix}
t_1^2-|\xi'|^2 &  t_2^2-|\xi'|^2 \\
-t_2(t_1\omega_\lambda-|\xi'|^2) & -t_1(t_2\omega_\lambda-|\xi'|^2)
\end{pmatrix}.  \notag
\end{align}

Let us solve \eqref{SEs}. % and the equations \eqref{eq:1}-\eqref{eq:4}.
By direct calculations, 
\begin{align}\label{def:detL}
\det \BL &= 
t_2(t_2^2-|\xi'|^2)(t_1\omega_\lambda-|\xi'|^2)-t_1(t_1^2-|\xi'|^2)(t_2\omega_\lambda-|\xi'|^2) \\
&=(t_2-t_1)\{t_1t_2\omega_\lambda(t_2+t_1)-|\xi'|^2(t_2^2+t_1t_2+t_1^2-|\xi'|^2)\}. \notag
\end{align}
One here proves

\begin{lemm}\label{lemm:detL}
%Assume that $\mu_*$, $\nu_*$, and $\kappa_*$ are positive constants satisfying the conditions of Case I or Case II.
%Then 
There holds $\det\BL\neq 0$ for any $(\xi',\lambda)\in\BR^{N-1}\times(\overline{\BC_+}\setminus\{0\})$,
where $\overline{\BC_+}=\{z\in\BC \mid \Re z\geq 0\}$.
\end{lemm}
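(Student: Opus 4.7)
The plan is to exploit the factorization in (\ref{def:detL}) and then show that
$$D := t_1 t_2 \omega_\lambda(t_1+t_2) - |\xi'|^2(t_1^2+t_1 t_2+t_2^2-|\xi'|^2) \neq 0$$
by a Fourier-side energy argument on the associated homogeneous problem. In Cases I and II Lemma \ref{lemm:roots_t}\,(\ref{coro:roots_s12}) gives $t_1 \neq t_2$; since $\vps_* < \pi/2$ in both cases, one can pick $\vps_1 \in (\vps_*, \pi/2)$ so that $\overline{\BC_+}\setminus\{0\} \subset \Sigma_{\vps_1}$, rendering the factor $t_2-t_1$ in (\ref{def:detL}) harmless. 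The slice $|\xi'|=0$ is immediate: there $D = t_1 t_2 \omega_\lambda(t_1+t_2)$, and Lemma \ref{lemm:symbol0}\,(1)(2) gives $\Re t_j, \Re\omega_\lambda > 0$, so no factor vanishes.

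For $|\xi'| > 0$ I would argue by contradiction. If $D=0$, then $\det \BL = 0$, so there is $(\beta_N, \gamma_N) \neq (0,0)$ with $\BL(\beta_N,\gamma_N)^\SST = 0$. Through (\ref{eq:51}), the homogeneous counterparts of (\ref{eq:55}) and (\ref{eq:59}) (which force $\alpha_J=0$), and the ansatz (\ref{eq:9_0})-(\ref{eq:9}), this produces an exponentially decaying pair $(\wht\rho, \wht u)$ on $x_N>0$ satisfying the homogeneous forms of (\ref{eq:1})-(\ref{eq:3}) together with the homogeneous boundary conditions from (\ref{eq:5})-(\ref{eq:4}). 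The identity $\wht u_N = \beta_N e^{-t_1 x_N} + \gamma_N e^{-t_2 x_N} - (\beta_N+\gamma_N)e^{-\omega_\lambda x_N}$, combined with the mutual distinctness of $t_1,t_2,\omega_\lambda$ from Lemma \ref{lemm:roots_t}\,(\ref{coro:roots_s12}), guarantees the solution is nontrivial.

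To close, I test (\ref{eq:2})-(\ref{eq:3}) against $\overline{\wht u_J}$, sum over $J$, and integrate in $x_N \in (0,\infty)$. Standard integrations by parts (all boundary terms vanish by $\wht u(0)=0$ and $\pd_N \wht\rho(0)=0$), the relation $\wht\vph = -\lambda\wht\rho$ from (\ref{eq:1}), and the divergence identity (\ref{eq:div-vph}) yield the energy identity
\begin{equation*}
\begin{aligned}
& \lambda \int_0^\infty |\wht u|^2 \intd x_N + \mu_* \int_0^\infty (|\pd_N \wht u|^2 + |\xi'|^2 |\wht u|^2) \intd x_N \\
& {} + \nu_* \int_0^\infty |\wht\vph|^2 \intd x_N + \kappa_* \bar\lambda \int_0^\infty (|\pd_N \wht\rho|^2 + |\xi'|^2 |\wht\rho|^2) \intd x_N = 0.
\end{aligned}
\end{equation*}
Taking real parts, every summand is nonnegative since $\Re\lambda \geq 0$ and $\mu_*,\nu_*,\kappa_*>0$; with $|\xi'|>0$ the $\mu_*|\xi'|^2|\wht u|^2$ term forces $\wht u \equiv 0$, the $\nu_*|\wht\vph|^2$ term then forces $\wht\vph\equiv 0$, and finally $\wht\rho=-\wht\vph/\lambda\equiv 0$, contradicting $(\beta_N,\gamma_N)\neq(0,0)$. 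The main obstacle is the $\Re\lambda=0$ slice of $\overline{\BC_+}\setminus\{0\}$, where the $\lambda$-coupling terms disappear upon taking real parts; the argument still closes because the purely dissipative contributions $\mu_*|\xi'|^2|\wht u|^2$ and $\nu_*|\wht\vph|^2$ alone annihilate the solution as long as $|\xi'|>0$, which is precisely why the slice $|\xi'|=0$ must be handled separately at the outset.
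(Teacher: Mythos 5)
Your proposal is correct and uses the same core strategy as the paper: a contradiction argument via a Fourier-side energy identity, arriving at the same weighted identity
$\lambda\sum\|\wht u_J\|^2+\mu_*\sum(\|\pd_N\wht u_J\|^2+|\xi'|^2\|\wht u_J\|^2)+\nu_*\|\wht\vph\|^2+\kappa_*\overline\lambda(\|\pd_N\wht\rho\|^2+|\xi'|^2\|\wht\rho\|^2)=0.$
The only difference is in the endgame: you take only the real part and lean on the $\mu_*|\xi'|^2\|\wht u_J\|^2$ dissipation, which forces you to treat $|\xi'|=0$ separately by a direct sign check on $D=t_1t_2\omega_\lambda(t_1+t_2)$, whereas the paper covers all cases uniformly by additionally taking the imaginary part of the identity to dispose of the $\Re\lambda=0$ case — a minor variation rather than a genuinely different route.
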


\begin{proof}
The lemma is proved by contradiction.
Suppose that $\det\BL=0$ for some $(\xi',\lambda)\in\BR^{N-1}\times(\overline{\BC_+}\setminus\{0\})$.
Then there is $(\beta_N,\gamma_N)\neq(0,0)$
satisfying \eqref{SEs} with $\wht g(0)=0$ and $\wht \Bh'(0)=0$.
This implies that the equations \eqref{1023_18:eq5} and \eqref{1103_18:eq5},
%\eqref{eq:4}, \eqref{eq:44}, \eqref{1023_18:eq5}, and \eqref{1103_18:eq5}
with $\wht\vph =\sum_{j=1}^{N-1}i\xi_j\wht u_j+\pd_N\wht u_N$ and homogeneous boundary conditions $\wht u_J(0)=0$ and $\pd_N\wht\vph(0)=0$,
%the boundary conditions \eqref{eq:4} and \eqref{eq:44}
%\eqref{eq:1}-\eqref{eq:4} with $\wht g(0)=0$ and $\wht h_j(0)=0$
admits a non-trivial solution $(\wht u_1,\dots,\wht u_N,\wht\vph)$
sufficiently smooth and decaying exponentially as $x_N\to\infty$.
Let us denote the non-trivial solution by $(u_1,\dots,u_N,\vph)$ for notational simplicity.

Recall that \eqref{1023_18:eq5} and \eqref{1103_18:eq5} are respectively equivalent to \eqref{eq:41} and \eqref{eq:42}.
One multiplies \eqref{eq:41} and \eqref{eq:42} by $\lambda^{-1}$, and then % in order to see
\begin{align}
\lambda u_j-\mu_*(\pd_N^2-|\xi'|^2) u_j -\nu_* i\xi_j\vph  
+\kappa_*\lambda^{-1} i\xi_j(\pd_N^2-|\xi'|^2)\vph&=0, \text{ $x_N>0$,}  \label{eq:101} \\
\lambda u_N-\mu_*(\pd_N^2-|\xi'|^2) u_N-\nu_* \pd_N\vph  
 +\kappa_*\lambda^{-1}\pd_N(\pd_N^2-|\xi'|^2) \vph&=0, \text{ $x_N>0$.} \label{eq:102}  
\end{align}
%In addition, it holds that 
%\begin{align}
%\vph(x_N)=\sum_{j=1}^{N-1}i\xi_j u_j(x_N)+\pd_N u_N(x_N)&, \quad x_N>0, \label{eq:105_2} \\
%u_J(0) = 0&, \label{eq:103} \\
%\pd_N\vph(0) = 0&. \label{eq:105}
%\end{align}
%\begin{align}
%\lambda\rho(x_N) +\vph(x_N) &= 0, \label{eq:100} \\ 
%\lambda u_j(x_N)-\mu_*(\pd_N^2-|\xi'|^2) u_j(x_N) -\nu_*i\xi_j \vph(x_N)& \label{eq:101}\\
%-\kappa_*i\xi_j(\pd_N^2-|\xi'|^2)\rho(x_N)&=0,   \notag \\
%\lambda  u_N(x_N)-\mu_*(\pd_N^2-|\xi'|^2) u_N(x_N) -\nu_*\pd_N \vph(x_N)& \label{eq:102} \\
%-\kappa_*\pd_N(\pd_N^2-|\xi'|^2)\rho(x_N)&=0, \notag \\
%\pd_N\rho(0) &= 0, \label{eq:105} \\
%u_J(0) &= 0. \label{eq:103} 
%\end{align}

In this proof,  we set 
$(a,b)=\int_0^\infty a(x_N)\,\overline{b(x_N)}\intd x_N$
and $\|a\| =\sqrt{(a,a)}$
for functions $a=a(x_N)$ and $b=b(x_N)$ on $\BR_+$.

{\bf Step 1.}
Multiplying \eqref{eq:101} by $\overline{u_j(x_N)}$ and integrating the resultant formula
with respect to $x_N\in(0,\infty)$ yield
\begin{align*}
\lambda\|u_j\|^2-\mu_*((\pd_N^2 u_j,u_j)-|\xi'|^2\|u_j\|^2)-\nu_*(i\xi_j\vph,u_j)& \\
+\kappa_*\lambda^{-1}(i\xi_j(\pd_N^2-|\xi'|^2)\vph, u_j)&=0,
\end{align*}
which, combined with $(\pd_N^2 u_j,u_j)=-\|\pd_N u_j\|^2$ following from integration by parts with $u_j(0)=0$ %\eqref{eq:103}
and combined with the properties:
\begin{equation*}
(i\xi_j\vph,u_j)=-(\vph,i\xi_j u_j), \quad (i\xi_j(\pd_N^2-|\xi'|^2)\vph, u_j)=-((\pd_N^2-|\xi'|^2)\vph,i\xi_j u_j),
\end{equation*}
furnishes that
\begin{align}\label{170821_3}
\lambda\|u_j\|^2+\mu_*(\|\pd_N u_j\|^2+|\xi'|^2\|u_j\|^2) +\nu_*(\vph,i\xi_j u_j) &\\
-\kappa_*\lambda^{-1}((\pd_N^2-|\xi'|^2)\vph,i\xi_j u_j)&=0. \notag
\end{align}
It similarly follows from \eqref{eq:102} that
\begin{align}\label{170821_4}
\lambda\|u_N\|^2+\mu_*(\|\pd_N u_N\|^2+|\xi'|^2\|u_N\|^2)+\nu_*(\vph,\pd_N u_N) & \\
-\kappa_*\lambda^{-1}((\pd_N^2-|\xi'|^2)\vph,\pd_N u_N)&=0. \notag
\end{align}

{\bf Step 2.}
Summing \eqref{170821_3} with respect to $j=1,\dots,N-1$ and \eqref{170821_4}, we have by $\vph=\sum_{j=1}^{N-1}i\xi_j u_j+\pd_N u_N$
\begin{align}\label{170821_5}
\lambda\sum_{J=1}^N\|u_J\|^2+\mu_*\sum_{J=1}^N\left(\|\pd_N u_J\|^2+|\xi'|^2\|u_J\|^2\right)&  \\
+\nu_*\|\vph\|^2-\kappa_*\lambda^{-1}((\pd_N^2-|\xi'|^2)\vph,\vph)&=0. \notag
\end{align}
On the other hand, by integration by parts with $\pd_N\vph(0)=0$,
$$
((\pd_N^2-|\xi'|^2)\vph,\vph)=-(\|\pd_N\vph\|^2+|\xi'|^2\|\vph\|^2).
$$
%by \eqref{eq:100} and integration by parts with \eqref{eq:105},
%\begin{equation*}
%((\pd_N^2-|\xi'|^2)\rho,\vph)=-\overline\lambda((\pd_N^2-|\xi'|^2)\rho,\rho)  
%=\overline{\lambda}(\|\pd_N\rho\|^2+|\xi'|^2\|\rho\|^2),
%\end{equation*} 
Inserting this relation into \eqref{170821_5} and noting $\lambda^{-1}=\overline\lambda|\lambda|^{-2}$ furnish
\begin{align}\label{170821_6}
\lambda\sum_{J=1}^N\|u_J\|^2+\mu_*\sum_{J=1}^N\left(\|\pd_N u_J\|^2+|\xi'|^2\|u_J\|^2\right)& \\
+\nu_*\|\vph\|^2+\kappa_*\overline\lambda|\lambda|^{-2}(\|\pd_N\vph\|^2+|\xi'|^2\|\vph\|^2)&=0. \notag
\end{align}

{\bf Step 3.}
One takes the real part of \eqref{170821_6} and 
the imaginary part of \eqref{170821_6} in order to obtain
\begin{align}
(\Re\lambda)\left\{\sum_{J=1}^N\|u_J\|^2+\kappa_*|\lambda|^{-2}(\|\pd_N\vph\|^2+|\xi'|^2\|\vph\|^2)\right\}& \label{170821_7} \\
\quad+\mu_*\sum_{J=1}^N\left(\|\pd_N u_J\|^2+|\xi'|^2\|u_J\|^2\right)+\nu_*\|\vph\|^2&=0, \notag \\
(\Im\lambda)\left\{\sum_{J=1}^N\|u_J\|^2-\kappa_*|\lambda|^{-2}(\|\pd_N\vph\|^2+|\xi'|^2\|\vph\|^2)\right\}&=0. \label{170821_8}
\end{align}

It now holds by \eqref{170821_7} that $\vph=0$.
%which, combined with \eqref{eq:100}, implies $\rho=0$.
One then sees that $(u_1,\dots,u_N)=(0,\dots,0)$ by \eqref{170821_7} when $\Re\lambda>0$
and by \eqref{170821_8} when $\Re\lambda=0$.
Hence, $(u_1,\dots,u_N,\vph)=(0,0,\dots,0)$,
which contradicts the fact that $(u_1,\dots,u_N,\vph)$ is a non-trivial solution.
This completes the proof of the lemma.
\end{proof}

Let us write the inverse matrix $\BL^{-1}$ of $\BL$ as follows:
\begin{equation*}
\BL^{-1} = \frac{1}{\det\BL}
\begin{pmatrix}
L_{11} & L_{12} \\
L_{21} & L_{22}
\end{pmatrix},
\end{equation*}
where
\begin{alignat}{2}\label{eq:cofacI}
L_{11} &= -t_1(t_2\omega_\lambda-|\xi'|^2),  &&\quad
L_{12} = -(t_2^2-|\xi'|^2), \\
L_{21} &= t_2(t_1\omega_\lambda-|\xi'|^2), && \quad
L_{22} =t_1^2-|\xi'|^2. \notag
\end{alignat}
One then sees that, by solving \eqref{SEs},
\begin{align}\label{bega}
\beta_N &=
\frac{\lambda L_{11}}{\det\BL}\wht g(0)+\frac{t_1 t_2 L_{12}}{\det\BL}i\xi'\cdot\wht\Bh'(0), \\
\gamma_N &=
\frac{\lambda L_{21}}{\det\BL}\wht g(0) +\frac{t_1 t_2 L_{22}}{\det\BL}i\xi'\cdot\wht\Bh'(0). \notag
\end{align}
On the other hand, one has, by \eqref{eq:9_0}, \eqref{eq:9}, \eqref{eq:10_2}, \eqref{eq:51}, \eqref{eq:60}, and \eqref{eq:55},
%which, combined with ,
%gives the exact formulas of $\beta_j$, $\gamma_j$.
%Hence, we have obtained
\begin{align*}
%\wht\rho(x_N)
%&=
%\left(\frac{t_1^2-|\xi'|^2}{\lambda t_1}\right)e^{-t_1 x_N}\beta_N 
%+\left(\frac{t_2^2-|\xi'|^2}{\lambda t_2}\right) e^{-t_2 x_N}\gamma_N, \\
\wht u_j(x_N)
&=
\wht h_j(0)e^{-\omega_\lambda x_N} 
-\frac{i\xi_j}{t_1}\beta_N(e^{-t_1 x_N}-e^{-\omega_\lambda x_N})  \\
&-\frac{i\xi_j}{t_2}\gamma_N(e^{-t_2 x_N}-e^{-\omega_\lambda x_N}), \\ 
\wht u_N(x_N)
&=
\beta_N\left(e^{-t_1 x_N}-e^{-\omega_\lambda x_N}\right)
+\gamma_N\left(e^{-t_2 x_N}-e^{-\omega_\lambda x_N}\right), \notag \\
\wht\vph(x_N)&=-\left(\frac{t_1^2-|\xi'|^2}{t_1}\right)\beta_Ne^{-t_1 x_N}
-\left(\frac{t_2^2-|\xi'|^2}{t_2}\right)\gamma_N e^{-t_2 x_N},
\end{align*}
and sets $\wht\rho(x_N)=-\lambda^{-1}\wht\vph(x_N)$ in view of \eqref{eq:1}.
%where we have used \eqref{eq:1}, \eqref{eq:50}, and \eqref{eq:60}
%in order to derive the formula of $\wht\rho(x_N)$.
Recall the inverse partial Fourier transform given in \eqref{defi:IPFT} and
set $\rho=\CF_{\xi'}^{-1}[\wht\rho(x_N)](x')$ and $u_J=\CF_{\xi'}^{-1}[\wht u_J(x_N)](x')$.
Then $\rho$ and $\Bu=(u_1,\dots, u_N)^\SST$ solve the system \eqref{eq2:half-red}.

%%%%%%%%%%%%%%%%%%%%%%%%%%%%%%%%%%%%%%%%%%%%%%%%%%%%%%%%%%%%%%%%%%%%%%
\subsection{Analysis of symbols}\label{subsec3-2}
This subsection estimates several symbols 
arising from the representation formulas of solutions obtained in Subsection \ref{subsec3-1}.

%\begin{lemm}\label{lemm:symbol0}
%There is a positive constant $C$, depending on at most $\mu_*$, $\nu_*$, and $\kappa_*$,
%such that, for any $(\xi',\lambda)\in\BR^{N-1}\times\BC_+$,
%\begin{equation*}
%\Re\om_\lambda\geq C(|\lambda|^{1/2}+|\xi'|), \quad  \Re t_\ell\geq C(|\lambda|^{1/2}+|\xi'|) \quad (\ell=1,2). 
%\end{equation*}
%It especially holds that, for any $(\xi',\lambda)\in\BR^{N-1}\times\BC_+$ , 
%\begin{equation*}
%\Re(t_\ell+\omega_\lambda)\geq C(|\lambda|^{1/2}+|\xi'|) \quad (\ell=1,2).
%\end{equation*}
%\end{lemm}

Let us define the following symbols: %for $\ell=1,2$,
\begin{align}\label{symbols_2}
\Fm_k(\xi',\lambda)&=\frac{t_k(t_k+\omega_\lambda)\det\BL}{\lambda(t_2-t_1)} \quad (k=1,2),  \\
\Fn_1(\xi',\lambda)&=\frac{(t_2+\omega_\lambda)L_{11}}{\lambda}, \quad
\Fn_2(\xi',\lambda)=\frac{(t_1+\omega_\lambda)L_{21}}{\lambda}, \notag \\
\Fp_1(\xi',\lambda)&=\frac{t_1+\omega_\lambda}{t_2+\omega_\lambda},
\quad \Fp_2(\xi',\lambda)=\frac{t_2+\omega_\lambda}{t_1+\omega_\lambda}. \notag
\end{align}
Subsequently, $k=1$ or $k=2$, and also
one often denotes $\Fm_k(\xi,\lambda)$, $\Fn_k(\xi',\lambda)$, and $\Fp_k(\xi',\lambda)$
by $\Fm_k$, $\Fn_k$, and $\Fp_k$, respectively, for short.
Recall by \eqref{defi:t} and \eqref{defi:omega} that 
\begin{equation}\label{recall:1}
t_k^2-|\xi'|^2=s_k\lambda, \quad t_k^2-\omega_\lambda^2=(s_k-\mu_*^{-1})\lambda, \quad
\omega_\lambda^2-|\xi'|^2 =\mu_*^{-1}\lambda.
\end{equation}
Since $\det\BL$ is written as
\begin{align*}
\det\BL %=t_1t_2\om_\lambda(t_2+t_1)-|\xi'|^2(t_2^2+t_1t_2+t_1^2-|\xi'|^2) \\
&=(t_2-t_1)\{t_2\omega_\lambda(t_2+t_1)(t_1-\omega_\lambda)-\omega_\lambda^2(t_1^2-|\xi'|^2) \\
& +(\omega_\lambda^2-|\xi'|^2)(t_2^2+t_1t_2+t_1^2-|\xi'|^2)\} \\
&=(t_2-t_1)\{t_1\omega_\lambda(t_2+t_1)(t_2-\omega_\lambda)-\omega_\lambda^2(t_2^2-|\xi'|^2) \\
&+(\omega_\lambda^2-|\xi'|^2)(t_2^2+t_1t_2+t_1^2-|\xi'|^2)\},
\end{align*}
one has by \eqref{recall:1}
\begin{align}\label{180318_1}
\Fm_k(\xi',\lambda)
&=(s_k-\mu_*^{-1}) t_1t_2 \omega_\lambda (t_2+t_1)-s_k t_k \omega_\lambda^2 (t_k+\omega_\lambda) \\
&+\mu_*^{-1}t_k(t_k+\omega_\lambda)(t_2^2+t_1 t_2+t_1^2-|\xi'|^2). \notag
\end{align}
In addition, by \eqref{recall:1},
\begin{align*}
(t_k+\omega_\lambda)(t_k\omega_\lambda-|\xi'|^2)
&=(t_k+\omega_\lambda)\{(t_k-\omega_\lambda)\omega_\lambda+\omega_\lambda^2-|\xi'|^2\} \\
&=(s_k-\mu_*^{-1})\lambda\omega_\lambda+\mu_*^{-1}\lambda(t_k+\omega_\lambda),
\end{align*}
which furnishes
\begin{align}\label{180227_1}
\Fn_1(\xi',\lambda)&=-t_1\{(s_2-\mu_*^{-1})\omega_\lambda+\mu_*^{-1}(t_2+\omega_\lambda)\}, \\
\Fn_2(\xi',\lambda)&=t_2\{(s_1-\mu_*^{-1})\omega_\lambda+\mu_*^{-1}(t_1+\omega_\lambda)\}. \notag
\end{align}

By \eqref{eq:cofacI}, \eqref{180318_1}, \eqref{180227_1}, and Lemmas \ref{lemm:algebra} and \ref{lemm:symbol0},
we immediately obtain

\begin{lemm}\label{lemm:symbol1}
Let $k=1,2$. It then holds that 
\begin{equation*}
L_{k 1}\in\BBM_{3,1}(\BC_+), \quad
\Fm_k \in \BBM_{4,1}(\BC_+), \quad 
\Fn_k \in \BBM_{2,1}(\BC_+), \quad
\Fp_k \in\BBM_{0,1}(\BC_+).
\end{equation*}
\end{lemm}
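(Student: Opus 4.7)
The plan is to reduce each symbol to a product of factors that belong to known $\BBM_{*,1}(\BC_+)$ classes and then invoke Lemma \ref{lemm:algebra}(1) to read off the total order. The building blocks come from Lemma \ref{lemm:symbol0}(3), which provides $t_k^r, (t_k+\omega_\lambda)^r \in \BBM_{r,1}(\Sigma_{\vps_1})$ and $\omega_\lambda^r \in \BBM_{r,1}(\Sigma_{\vps_2})$ for every $r\in\BR$, together with the memberships $\xi_k,\lambda^{1/2}\in\BBM_{1,1}$ recorded in Subsection \ref{subsec2-1}. Because $\vps_*<\pi/2$, one may choose $\vps_1\in(\vps_*,\pi/2)$, and then $|\arg\lambda|<\pi/2<\pi-\vps_1$ for $\lambda\in\BC_+$, so that $\BC_+\subset\Sigma_{\vps_1}$ and the above symbol memberships all restrict to $\BC_+$.

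For $L_{k 1}$ I would use the definition \eqref{eq:cofacI} directly: $t_k\omega_\lambda\in\BBM_{2,1}$ by Lemma \ref{lemm:algebra}(1) and $|\xi'|^2=\sum_{j=1}^{N-1}\xi_j^2\in\BBM_{2,1}$, so $t_k\omega_\lambda-|\xi'|^2\in\BBM_{2,1}$, and multiplication by the outer $t_{k'}\in\BBM_{1,1}$ yields $L_{k 1}\in\BBM_{3,1}$. For $\Fn_k$ I would invoke the reformulation \eqref{180227_1}: the bracketed expression is a constant-coefficient linear combination of $\omega_\lambda$ and $t_{k'}+\omega_\lambda$, each in $\BBM_{1,1}$, hence lies in $\BBM_{1,1}$, and multiplication by the outer factor $t_j\in\BBM_{1,1}$ delivers $\BBM_{2,1}$. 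For $\Fm_k$ I would exploit \eqref{180318_1}, which writes $\Fm_k$ as a sum of three terms, each a product whose factor orders sum to four (in particular, $t_2^2+t_1 t_2+t_1^2-|\xi'|^2\in\BBM_{2,1}$ as a sum of degree-two monomials); each summand therefore lies in $\BBM_{4,1}$, and so does their sum.

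For $\Fp_k$ the denominator has positive order, so Lemma \ref{lemm:algebra}(1) does not apply on its own. Here I would appeal again to Lemma \ref{lemm:symbol0}(3), now with $r=-1$, obtaining $(t_{k'}+\omega_\lambda)^{-1}\in\BBM_{-1,1}$; Lemma \ref{lemm:algebra}(1) then combines this with the numerator in $\BBM_{1,1}$ to give $\Fp_k\in\BBM_{0,1}$.

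The main obstacle, and the point of the reformulations \eqref{180318_1} and \eqref{180227_1}, is to absorb the apparent singularities $1/\lambda$ and $1/(t_2-t_1)$ that appear in the raw definitions \eqref{symbols_2} of $\Fm_k$ and $\Fn_k$. A direct attack is hopeless because neither $\lambda^{-1}$ nor $(t_2-t_1)^{-1}$ is accessible to the symbol calculus of Subsection \ref{subsec2-1}; there is no positive lower bound on $|\lambda|$ in $\BC_+$, and in Cases I and II one has $t_1\ne t_2$ only as a non-quantitative fact. The cancellations follow precisely from the identities \eqref{recall:1}, which extract a factor of $\lambda$ from the differences $t_k^2-|\xi'|^2$, $t_k^2-\omega_\lambda^2$, and $\omega_\lambda^2-|\xi'|^2$, together with the explicit $(t_2-t_1)$ factor visible in \eqref{def:detL}. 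Once these cancellations have produced the polynomial-in-roots expressions \eqref{180318_1} and \eqref{180227_1}, the remainder of the verification is purely mechanical symbol arithmetic.
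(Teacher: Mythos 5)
Your proposal is correct and follows the paper's own route exactly: read $L_{k1}$ off \eqref{eq:cofacI}, $\Fm_k$ off \eqref{180318_1}, $\Fn_k$ off \eqref{180227_1}, handle the denominator of $\Fp_k$ via Lemma \ref{lemm:symbol0}(3) with $r=-1$, and close with Lemma \ref{lemm:algebra}(1); your observation that the reformulations are precisely what remove the inaccessible $\lambda^{-1}$ and $(t_2-t_1)^{-1}$ factors is the (unstated) point of the paper's preparatory identities. The only slip is cosmetic: in $L_{k1}$ the outer factor is $t_k$ and the inner product involves $t_{k'}$, not the other way around, but since both lie in $\BBM_{1,1}(\BC_+)$ this does not affect the order count.
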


%As the third step, we estimates symbols obtained above.
%Recall the definition $\ep_*=\arg s_2$.
%The following lemma was essentially proved in \cite{Saito1}.
%\begin{lemm}\label{lemm:symbols}
%Let $r\in\BR$ and $\ell=1,2$. Then the following assertions hold true.
%\begin{enumerate}[$(1)$]
%\item
%For any $\ep\in(0,\pi/2)$, we have $\omega_\lambda^r\in\BBM_{r,1}(\Si_{\ep,0})$.
%\item
%For any $\ep\in(\ep_*,\pi/2)$, we have $t_\ell^r,(t_\ell+\omega_\lambda)^r\in\BBM_{r,1}(\Si_{\ep,0})$.
%In addition,
%\begin{equation*}
%\Fp_\ell(\xi',\lambda)\in\BBM_{0,1}(\Si_{\ep,0}),
%\,\Fn_\ell(\xi',\lambda)\in\BBM_{2,1}(\Si_{\ep,0}),
%\, L_{\ell 1}\in\BBM_{3,1}(\Si_{\ep,0}).
%\end{equation*}
%\end{enumerate}
%\end{lemm}

To treat $\Fm_k(\xi',\lambda)^{-1}$, we prove

\begin{lemm}\label{lemm:lopatinski}
There exists a positive constant $C_{\mu_*,\kappa_*,\nu_*}$ such that, 
for any $(\xi',\lambda)\in\BR^{N-1}\times (\overline{\BC_+}\setminus\{0\})$, there holds the estimate:
\begin{equation}\label{180305_1}
|\Fm_k(\xi',\lambda)|\geq C_{\mu_*,\kappa_*,\nu_*}(|\lambda|^{1/2}+|\xi'|)^4 \quad (k=1,2).
\end{equation}
\end{lemm}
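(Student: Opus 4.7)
The plan is to combine a parabolic homogeneity argument with the non-vanishing established in Lemma \ref{lemm:detL}, concluded by a compactness/continuity reasoning. First, I would establish the homogeneity: under the parabolic scaling $(\xi',\lambda)\mapsto(r\xi',r^2\lambda)$ with $r>0$, both $t_k$ and $\omega_\lambda$ rescale as $rt_k$ and $r\omega_\lambda$ respectively, since $t_k^2=|\xi'|^2+s_k\lambda$ and $\omega_\lambda^2=|\xi'|^2+\mu_*^{-1}\lambda$. A direct degree count in the definition
$$ \Fm_k=\frac{t_k(t_k+\omega_\lambda)\det\BL}{\lambda(t_2-t_1)}, $$
using \eqref{def:detL}, then yields $\Fm_k(r\xi',r^2\lambda)=r^4\Fm_k(\xi',\lambda)$. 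Because $(|\lambda|^{1/2}+|\xi'|)$ also scales by a factor of $r$, the quotient $|\Fm_k(\xi',\lambda)|/(|\lambda|^{1/2}+|\xi'|)^4$ is scale-invariant, so it suffices to bound it from below on the compact set
$$ K=\{(\xi',\lambda)\in\BR^{N-1}\times\overline{\BC_+}\mid |\lambda|^{1/2}+|\xi'|=1\}. $$

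Second, I would use the alternative representation \eqref{180318_1}, which expresses $\Fm_k$ as a polynomial in $t_1,t_2,\omega_\lambda,\xi'$ without any division by $\lambda$ or $(t_2-t_1)$. To justify continuity on $K$ I would verify that the arguments $|\xi'|^2+s_k\lambda$ and $|\xi'|^2+\mu_*^{-1}\lambda$ never land on the branch cut $(-\infty,0]$ when $\lambda\in\overline{\BC_+}\setminus\{0\}$: in Case II this is immediate from $s_k>0$ real, and in Case I the condition $\Re s_k>0$ together with $\Im s_k\neq 0$ forces the imaginary part of $s_k\lambda$ to vanish only along a ray where the real part is strictly positive. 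At points of $K$ with $\lambda=0$ (so $|\xi'|=1$), the functions $t_k=|\xi'|$ and $\omega_\lambda=|\xi'|$ are continuous, so \eqref{180318_1} shows $\Fm_k$ is continuous throughout $K$.

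Third, I would verify non-vanishing on $K$. At points with $\lambda\neq 0$: Lemma \ref{lemm:detL} gives $\det\BL\neq 0$; the factors $t_k$, $t_k+\omega_\lambda$, and $\lambda$ are all nonzero thanks to $\Re t_k,\Re\omega_\lambda>0$; and $t_2-t_1\neq 0$ because $s_1\neq s_2$ in Cases I and II. Hence $\Fm_k\neq 0$. At the remaining points $\lambda=0,\;|\xi'|=1$, substituting $t_1=t_2=\omega_\lambda=|\xi'|$ into \eqref{180318_1} gives after cancellation
$$ \Fm_k(\xi',0)=2(s_k-\mu_*^{-1})|\xi'|^4-2s_k|\xi'|^4+4\mu_*^{-1}|\xi'|^4=2\mu_*^{-1}|\xi'|^4>0. $$
Thus $|\Fm_k|$ is a continuous, strictly positive function on the compact set $K$, so it attains a positive minimum $c$, and rescaling via the homogeneity delivers \eqref{180305_1} with $C_{\mu_*,\kappa_*,\nu_*}=c$.

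The main obstacle I expect is the apparent $1/\lambda$ singularity of $\Fm_k$ at the boundary $\lambda=0$: Lemma \ref{lemm:detL} is stated only for $\lambda\neq 0$, so the compactness argument cannot be closed unless $\Fm_k$ is shown to extend continuously to $\lambda=0$ with a nonzero value there. The essential point is that $\det\BL$ carries a factor $(t_2-t_1)$ explicit in \eqref{def:detL}, which cancels the denominator $(t_2-t_1)$ of $\Fm_k$, while the remaining $1/\lambda$ is removed by the algebraic identities in \eqref{recall:1}, leading to the polynomial form \eqref{180318_1}. Without this cancellation one would have to treat the $\lambda\to 0$ limit by hand and lose the clean compactness argument.
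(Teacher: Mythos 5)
Your proof is correct, and it takes a genuinely different (and leaner) route than the paper. The paper handles the normalized region by splitting into three sub-cases: an asymptotic expansion of $\Fm_k$ as $|\xi'|^2/|\lambda|\to 0$ (Case~1), a second asymptotic expansion as $|\lambda|/|\xi'|^2\to 0$ (Case~2), and a compactness--continuity argument only on a compact annulus bounded away from both degenerations (Case~3). You instead run a single compactness argument on the full parabolic sphere $K=\{|\lambda|^{1/2}+|\xi'|=1\}$, which forces you to face the apparent singularity at the boundary $\lambda=0$ head-on: you resolve it by switching to the polynomial representation \eqref{180318_1}, verifying that the square-root arguments stay off the branch cut (so $t_1,t_2,\omega_\lambda$ are jointly continuous on $K$), and computing the boundary value $\Fm_k(\xi',0)=2\mu_*^{-1}|\xi'|^4>0$. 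That boundary computation is exactly what the paper's Case~2 obtains as the leading term of its expansion in $y=\lambda/|\xi'|^2$, so the two arguments overlap in substance; your version packages it as a continuity statement rather than an $O(y)$ estimate. What the paper's three-case split buys is that it never needs to discuss continuity of $\Fm_k$ at $\lambda=0$ or branch-cut issues; what your version buys is a single, shorter compactness argument instead of two separate asymptotic computations plus a third compactness step. Both are sound; your presentation is cleaner, but be careful to state explicitly that the extension of $\Fm_k$ to $\lambda=0$ is made via \eqref{180318_1} (the defining formula \eqref{symbols_2} is literally $0/0$ there), which you do acknowledge as the main obstacle and handle correctly.
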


\begin{proof}
{\bf Case 1}. One considers the case $|\xi'|^2/|\lambda|\leq R_1$, where
$\xi'\in\BR^{N-1}$, $\lambda\in\overline{\BC_+}\setminus\{0\}$, and
some number $R_1\in(0,1)$ determined below.

Let $z=|\xi'|^2/\lambda$. Then,
\begin{equation*}
t_k=\sqrt{s_k\lambda}(1+O(z)), \quad \omega_\lambda=\sqrt{\mu_*^{-1}\lambda}(1+O(z))
\quad \text{as $|z|\to 0$,}
\end{equation*}
which yields that
$$
t_k+\omega_\lambda
=\Big(\sqrt{s_k}+\sqrt{\mu_*^{-1}}\,\Big)\sqrt{\lambda}(1+O(z)) \quad \text{as $|z|\to0$}
$$
and that by the second equality of \eqref{def:detL}
\begin{align*}
&\frac{\det\BL}{t_2-t_1}
=\sqrt{s_2}\sqrt{s_1}\sqrt{\mu_*^{-1}}\Big(\sqrt{s_2}+\sqrt{s_1}\Big)\lambda^2(1+O(z)) \quad \text{as $|z|\to0$.}
\end{align*}
One thus has by \eqref{symbols_2}
\begin{equation*}
\Fm_k(\xi',\lambda)=\sqrt{s_k}\Big(\sqrt{s_k}+\sqrt{\mu_*^{-1}}\,\Big)
\sqrt{s_2}\sqrt{s_1}\sqrt{\mu_*^{-1}}\Big(\sqrt{s_2}+\sqrt{s_1}\Big)
\lambda^2(1+O(z))
\end{equation*}
as $|z|\to 0$.
Since there is a positive constant $M$, depending on at most $\mu_*$, $\nu_*$, and $\kappa_*$, such that
$\Re\sqrt{s_k}\geq M$, one observes that
%by Lemma \ref{lemm:roots_s}, one observes that
\begin{align*}
|\sqrt{s_k}|&\geq \Re\sqrt{s_k} \geq M, \\
\Big|\sqrt{s_k}+\sqrt{\mu_*^{-1}}\Big|
&\geq \Re\Big(\sqrt{s_k}+\sqrt{\mu_*^{-1}}\,\Big) \geq M+\sqrt{\mu_*^{-1}}, \\
|\sqrt{s_2}+\sqrt{s_1}|&\geq \Re(\sqrt{s_1}+\sqrt{s_2})\geq 2 M.
\end{align*}
Combining these inequalities with the last equality yields that
there is a constant $R_1\in(0,1)$ such that,
for any $(\xi',\lambda)\in\BR^{N-1}\times(\overline{\BC_+}\setminus\{0\})$ with $|\xi'|^2/|\lambda|\leq R_1$,
\begin{equation*}
|\Fm_\ell(\xi',\lambda)|\geq  C_{\mu_*,\nu_*,\kappa_*}|\lambda|^2
\end{equation*}
for some positive constant $C_{\mu_*,\nu_*,\kappa_*}$.
On the other hand, it holds by $(a+b)^2/2\leq (a^2+b^2)$ with $a,b\geq0$ that 
\begin{align*}
|\lambda|^2&=\frac{1}{2}(|\lambda|^2+|\lambda|^2)\geq 
\frac{1}{2}\left\{|\lambda|^2+\left(\frac{|\xi'|^2}{R_1}\right)^2\right\}\geq \frac{1}{4}\left(|\lambda|+\frac{|\xi'|^2}{R_1}\right)^2 \\
& \geq \frac{1}{16}\left(|\lambda|^{1/2}+\frac{|\xi'|}{\sqrt{R_1}}\right)^4.
\end{align*}
By the last two inequalities,
we have \eqref{180305_1} for $(\xi',\lambda)\in\BR^{N-1}\times(\overline{\BC_+}\setminus\{0\})$ with $|\xi'|^2/|\lambda|\leq R_1$.

{\bf Case 2}.
One considers the case $|\lambda|/|\xi'|^2\leq R_2$, where
$\xi'\in\BR^{N-1}\setminus\{0\}$, $\lambda\in\overline{\BC_+}\setminus\{0\}$, and
some number $R_2\in(0,1)$ determined below.

Let $y=\lambda/|\xi'|^2$. Then,
\begin{equation*}
t_k=|\xi'|(1+O(y)), \quad \omega_\lambda=|\xi'|(1+O(y))
\quad \text{as $|y|\to 0$,}
\end{equation*}
which, combined with \eqref{180318_1}, furnishes that
\begin{equation*}
\Fm_k(\xi',\lambda)=2\mu_*^{-1}|\xi'|^4(1+O(y)) \quad \text{as $|y|\to 0$.}
\end{equation*}
Thus there is a constant $R_2\in(0,1)$ such that,
for any $(\xi',\lambda)\in(\BR^{N-1}\setminus\{0\})\times(\overline{\BC_+}\setminus\{0\})$ with $|\lambda|/|\xi'|^2\leq R_2$,
\begin{equation*}
|\Fm_\ell(\xi',\lambda)|\geq \mu_*^{-1}|\xi'|^4.
\end{equation*}
Similarly to Step 1, this inequality yields \eqref{180305_1}
for $(\xi',\lambda)\in(\BR^{N-1}\setminus\{0\})\times(\overline{\BC_+}\setminus\{0\})$ with $|\lambda|/|\xi'|^2\leq R_2$.

{\bf Case 3}.
One considers the case $|\xi'|^2/|\lambda|\geq R_1/2$ and $|\lambda|/|\xi'|^2\geq R_2/2$,
where $\xi'\in\BR^{N-1}\setminus\{0\}$,
$\lambda\in\overline{\BC_+}\setminus\{0\}$,
and $R_1$, $R_2$ are positive constants introduced in the above two steps.
The condition of this case is equivalent to
\begin{equation}\label{180305_3}
\frac{R_2}{2}|\xi'|^2\leq |\lambda| \leq \frac{2}{R_1}|\xi'|^2.
\end{equation}

Let $\wtd\xi'$, $\wtd\lambda$, $\wtd t_k$, and $\wtd\omega_\lambda$ be given by
\begin{alignat*}{2}
\wtd\xi'&=(|\lambda|^{1/2}+|\xi'|)^{-1}\xi', \quad 
&\wtd\lambda&=(|\lambda|^{1/2}+|\xi'|)^{-2}\lambda, \\
\wtd t_k&=\sqrt{|\wtd\xi'|^2+s_k\wtd\lambda}, 
&\wtd\omega_\lambda&=\sqrt{|\wtd\xi'|^2+\mu_*^{-1}\wtd\lambda}.
\end{alignat*}
One then observes that
\begin{equation*}
\Fm_k(\xi',\lambda)=(|\lambda|^{1/2}+|\xi'|)^4\Fm_k(\wtd\xi',\wtd\lambda)
\end{equation*}
and that \eqref{180305_3} implies $r_1\leq|\wtd\xi'|\leq r_2$ and $r_3\leq|\wtd\lambda|\leq r_4$, where 
\begin{alignat*}{2}
r_1&=\left(\sqrt{\frac{2}{R_1}}+1\right)^{-1}, \quad 
&r_2&=\left(\sqrt{\frac{R_2}{2}}+1\right)^{-1}, \\
r_3&=\left(\sqrt{\frac{2}{R_2}}+1\right)^{-2}, \quad
&r_4&=\left(\sqrt{\frac{R_1}{2}}+1\right)^{-2}.
\end{alignat*}
We here define a compact set $K$ as follows:
\begin{equation*}
K=\{(\wtd\xi',\wtd\lambda)\in\BR^{N-1}\times\overline{\BC_+}
\mid r_1\leq|\wtd\xi'|\leq r_2, r_3\leq|\wtd\lambda|\leq r_4\}.
\end{equation*} 
Since $\Fm_k(\xi',\lambda)$ is continuous and $\Fm_k(\xi',\lambda)\neq 0$ on 
$\BR^{N-1}\times (\overline{\BC_+}\setminus\{0\})$
by \eqref{symbols_2} and Lemma \ref{lemm:detL},
there exists at least one minimum of $|\Fm_\ell(\wtd\xi',\wtd\lambda)|$ over $K$ such that
\begin{equation*}
m_k:=\min_{(\wtd\xi',\wtd\lambda)\in K}|\Fm_k(\wtd\xi',\wtd\lambda)|>0.
\end{equation*}
Thus, for any $(\xi',\lambda)\in(\BR^{N-1}\setminus\{0\})\times(\overline{\BC_+}\setminus\{0\})$ with \eqref{180305_3},
\begin{equation*}
|\Fm_k(\xi',\lambda)|
=(|\lambda|^{1/2}+|\xi'|)^4|\Fm_k(\wtd\xi',\wtd\lambda)|\geq m_k(|\lambda|^{1/2}+|\xi'|)^4,
\end{equation*}
which implies that \eqref{180305_1} holds for Case 3.

Summing up the above estimates, we have completed the proof of Lemma \ref{lemm:lopatinski}.
\end{proof}

\begin{coro}\label{coro:lopatinski}
Let $k=1,2$. Then $\Fm_k^{-1}\in\BBM_{-4,1}(\BC_+)$.
\end{coro}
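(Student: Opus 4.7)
The plan is to deduce Corollary \ref{coro:lopatinski} from the upper bounds in Lemma \ref{lemm:symbol1} together with the uniform lower bound in Lemma \ref{lemm:lopatinski}. By the definition of $\BBM_{-4,1}(\BC_+)$, the task is to show
\[
\left|\pd_{\xi'}^{\alpha'}\!\left(\!\left(\lambda\frac{d}{d\lambda}\right)^{\!n}\!\Fm_k(\xi',\lambda)^{-1}\right)\right|\leq C(|\lambda|^{1/2}+|\xi'|)^{-4-|\alpha'|}
\]
for every $\alpha'\in\BN_0^{N-1}$ and every $n\in\{0,1\}$, uniformly on $(\BR^{N-1}\setminus\{0\})\times\BC_+$. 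The base case ($\alpha'=0$, $n=0$) is exactly Lemma \ref{lemm:lopatinski}, and the general case should be obtained by differentiating $1/\Fm_k$ and keeping careful track of exponents.

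The core step I would isolate is the following general principle: if $h\in\BBM_{r,1}(\Lambda)$ satisfies $|h(\xi',\lambda)|\geq c_0(|\lambda|^{1/2}+|\xi'|)^{r}$ uniformly on its domain, then $h^{-1}\in\BBM_{-r,1}(\Lambda)$. For the pure spatial derivatives this follows by induction on $|\alpha'|$ from the Faà di Bruno-type expansion
\[
\pd_{\xi'}^{\alpha'}\!\Big(\frac{1}{h}\Big)=\sum_{\ell=1}^{|\alpha'|}\sum_{\substack{\alpha_1+\cdots+\alpha_\ell=\alpha'\\|\alpha_i|\geq 1}}\frac{c_{\alpha_1,\ldots,\alpha_\ell}}{h^{\ell+1}}\prod_{i=1}^{\ell}\pd_{\xi'}^{\alpha_i}h.
\]
Each factor in the product is bounded by $C(|\lambda|^{1/2}+|\xi'|)^{r-|\alpha_i|}$, the denominator is bounded below by $c_0^{\ell+1}(|\lambda|^{1/2}+|\xi'|)^{r(\ell+1)}$, and the exponents sum to exactly $-r(\ell+1)+r\ell-|\alpha'|=-r-|\alpha'|$. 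For $n=1$ one first writes $(\lambda d/d\lambda)(1/h)=-h^{-2}(\lambda d_\lambda h)$ and applies $\pd_{\xi'}^{\alpha'}$ via Leibniz; each resulting term carries at most one factor of the form $\pd_{\xi'}^{\gamma}(\lambda d_\lambda h)$, which is again of order $r-|\gamma|$ by the hypothesis $h\in\BBM_{r,1}(\Lambda)$, and the same exponent count yields $-r-|\alpha'|$.

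Applying this principle to $h=\Fm_k$ with $r=4$, using $\Fm_k\in\BBM_{4,1}(\BC_+)$ from Lemma \ref{lemm:symbol1} together with the lower bound $|\Fm_k|\geq C(|\lambda|^{1/2}+|\xi'|)^{4}$ from Lemma \ref{lemm:lopatinski}, immediately yields $\Fm_k^{-1}\in\BBM_{-4,1}(\BC_+)$. The substantive obstacle in this chain of reasoning, namely the uniform lower bound on $|\Fm_k|$, has already been resolved in Lemma \ref{lemm:lopatinski} via the three-region argument (small $|\xi'|^2/|\lambda|$, small $|\lambda|/|\xi'|^2$, and a compactness step in between); the reciprocation step itself is pure Faà di Bruno bookkeeping and presents no additional difficulty.
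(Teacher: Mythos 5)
Your proof is correct and follows essentially the same route as the paper: both use the upper bounds of Lemma \ref{lemm:symbol1}, the lower bound of Lemma \ref{lemm:lopatinski}, a Fa\`a di Bruno/Bell expansion for the spatial derivatives, and the identity $\left(\lambda\frac{d}{d\lambda}\right)\Fm_k^{-1}=-\Fm_k^{-2}\left(\lambda\frac{d}{d\lambda}\Fm_k\right)$ combined with Leibniz for the $n=1$ case. The only cosmetic difference is that the paper first establishes the intermediate estimate $|\pd_{\xi'}^{\alpha'}\Fm_k^r|\leq C(|\lambda|^{1/2}+|\xi'|)^{4r-|\alpha'|}$ for a general real power $r$ (used with $r=-1$ and $r=-2$), whereas you phrase the same bookkeeping as a self-contained reciprocation principle.
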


\begin{proof}
Recall Bell's formula for derivatives of the composite function of $f(t)$ and $g(\xi')$  as follows:
for any multi-index $\alpha'\in\BN_0^{N-1}$,
\begin{equation*}%\label{Bell}
\pd_{\xi'}^{\alpha'} f(g(\xi'))=\sum_{k=1}^{|\alpha|} f^{(k)}(g(\xi'))
\sum_{\stackrel{\text{\scriptsize{$\alpha_1'+\dots+\alpha_k'=\alpha'$,}}}{|\alpha_j'|\geq 1}}
\Gamma_{\alpha_1',\dots,\alpha_k'}^{\alpha'}(\pd_{\xi'}^{\alpha_1'}g(\xi'))\dots(\pd_{\xi'}^{\alpha_k'}g(\xi'))
\end{equation*}
with suitable coefficients $\Gamma_{\alpha_1',\dots,\alpha_k'}^{\alpha'}$,
where $f^{(k)}(t)$ is the $k$th derivative of $f(t)$.

One first proves, for any $r\in \BR$ and multi-index $\alpha'\in\BN_0^{N-1}$, 
\begin{equation}\label{180305_5}
|\pd_{\xi'}^{\alpha'}\Fm_k(\xi',\lambda)^r|\leq C_{r,\alpha',\mu_*,\nu_*,\kappa_*}(|\lambda|^{1/2}+|\xi'|)^{4r-|\alpha'|},
\end{equation}
where $(\xi',\lambda)\in(\BR^{N-1}\setminus\{0\})\times\BC_+$.
Using Bell's formula with $f(t)=t^r$ and $g(\xi')=\Fm_k(\xi',\lambda)$, we have
by Lemmas \ref{lemm:symbol1} and \ref{lemm:lopatinski}
\begin{align*}
|\pd_{\xi'}^{\alpha'}\Fm_k(\xi',\lambda)^r|
&\leq C_{r,\alpha',\mu_*,\nu_*,\kappa_*}\sum_{k=1}^{|\alpha'|}|\Fm_k(\xi',\lambda)|^{r-k}(|\lambda|^{1/2}+|\xi'|)^{4k-|\alpha'|} \\
&\leq C_{r,\alpha',\mu_*,\nu_*,\kappa_*}\sum_{k=1}^{|\alpha'|}(|\lambda|^{1/2}+|\xi'|)^{4(r-k)}(|\lambda|^{1/2}+|\xi'|)^{4k-|\alpha'|} \\
&\leq C_{r,\alpha',\mu_*,\nu_*,\kappa_*}(|\lambda|^{1/2}+|\xi'|)^{4r-|\alpha'|}
\end{align*}
for any $(\xi',\lambda)\in(\BR^{N-1}\setminus\{0\})\times\BC_+$.
%with some positive constant $C_{r,\alpha',\mu_*,\nu_*,\kappa_*}$ independent of $\xi'$ and $\lambda$.
This implies that \eqref{180305_5} holds true.

Now it holds that
\begin{equation*}
\lambda\frac{d}{d\lambda}\Fm_k(\xi',\lambda)^{-1}=-\Fm_k(\xi',\lambda)^{-2}\left(\lambda\frac{d}{d\lambda}\Fm_k(\xi',\lambda)\right).
\end{equation*}
Combining this relation with \eqref{180305_5} for $r=-2$ and Lemma \ref{lemm:symbol1} furnishes,
by Leibniz's rule, that for any multi-index $\alpha'\in\BN_0^{N-1}$ and
$(\xi',\lambda)\in(\BR^{N-1}\setminus\{0\})\times\BC_+$
\begin{align*}
&\left|\pd_{\xi'}^{\alpha'}\left(\lambda\frac{d}{d\lambda}\Fm_k(\xi',\lambda)^{-1}\right)\right| \\
&\leq C_{\alpha'}\sum_{\beta'+\gamma'=\alpha'}\left|\pd_{\xi'}^{\beta'}\Fm_k(\xi',\lambda)^{-2}\right|
\left|\pd_{\xi'}^{\gamma'}\left(\lambda\frac{d}{d\lambda}\Fm_k(\xi',\lambda)\right)\right| \\
&\leq C_{\alpha',\mu_*,\nu_*,\kappa_*} \sum_{\beta'+\gamma'=\alpha'}
(|\lambda|^{1/2}+|\xi'|)^{-8-|\beta'|}(|\lambda|^{1/2}+|\xi'|)^{4-|\gamma'|} \\
&\leq C_{\alpha',\mu_*,\nu_*,\kappa_*}(|\lambda|^{1/2}+|\xi'|)^{-4-|\alpha'|}.
\end{align*}
This inequality and \eqref{180305_5} with $r=-1$ complete the proof of the corollary.
\end{proof}

%%%%%%%%%%%%%%%%%%%%%%%%%%%%%%%%%%%%%%%%%%%%%%%%%%%%%%%%%%%%%%%%%%%%%%
\subsection{Proof of Theorem \ref{theo:half-red}}\label{subsec3-3}
This subsection proves Theorem \ref{theo:half-red} 
by means of results obtained in Subsections \ref{subsec3-1} and \ref{subsec3-2}.
Let $\CM_0(x_N)$, $\CM_1(x_N)$, and $\CM_2(x_N)$ be symbols given in \eqref{stkernel:1}.

It now holds that
\begin{align*}
\rho
&=
\CF_{\xi'}^{-1}\left[\left\{\left(\frac{t_1^2-|\xi'|^2}{\lambda t_1}\right)\beta_N+\left(\frac{t_2-|\xi'|^2}{\lambda t_2}\right)\gamma_N\right\} e^{-t_1 x_N}\right](x') \\
&+\CF_{\xi'}^{-1}\left[\left(\frac{t_2^2-|\xi'|^2}{\lambda t_2}\right)\gamma_N\left(e^{-t_2 x_N}-e^{-t_1 x_N}\right)\right](x') \\
&=:\rho_1+\rho_2.
\end{align*}
By \eqref{eq:cofacI}, \eqref{bega}, \eqref{symbols_2}, and \eqref{recall:1}, we see that 
\begin{align*}
\rho_1&=
\CF_{\xi'}^{-1}\left[
\left(\frac{s_1\lambda L_{11}}{t_1\det\BL}+\frac{s_2\lambda L_{21}}{t_2\det\BL}\right) \wht g(0)e^{-t_1 x_N}\right](x') \\
&+\CF_{\xi'}^{-1}\left[
\left(\frac{s_1t_2L_{12}+s_2 t_1 L_{22}}{\det\BL}\right)i\xi'\cdot\wht \Bh'(0)e^{-t_1 x_N}
\right](x') \\
&=\sum_{k=1}^2\CF_{\xi'}^{-1}\left[\frac{s_k (t_2+t_1)\Fp_k(\xi',\lambda)\Fn_k(\xi',\lambda)}{(s_2-s_1)\Fm_k(\xi',\lambda)}e^{-t_1 x_N}\wht g(0)\right](x') \\
&-\sum_{l=1}^{N-1}\CF_{\xi'}^{-1}\left[\frac{s_1s_2i\xi_l t_1(t_1+\omega_\lambda)}{\Fm_1(\xi',\lambda)}e^{-t_1 x_N}\wht h_l(0)\right](x').
\end{align*}
On the other hand, $\beta_N$ and $\gamma_N$ are written as
\begin{align}\label{171010_1}
\beta_N
&=\frac{t_1(t_1+\omega_\lambda)L_{11}}{(t_2-t_1)\Fm_1(\xi',\lambda)}\wht g(0)
-\frac{s_2 t_1^2 t_2(t_1+\omega_\lambda)}{(t_2-t_1)\Fm_1(\xi',\lambda)}i\xi'\cdot\wht\Bh'(0), \\
\gamma_N
&=\frac{t_2(t_2+\omega_\lambda)L_{21}}{(t_2-t_1)\Fm_2(\xi',\lambda)}\wht g(0)
+\frac{s_1t_1t_2^2(t_2+\omega_\lambda)}{(t_2-t_1)\Fm_2(\xi',\lambda)}i\xi'\cdot\wht\Bh'(0), \notag
\end{align}
which furnishes, together with \eqref{recall:1}, that
\begin{align*}
\rho_2&=
\CF_{\xi'}^{-1}\left[\frac{s_2(t_2+\omega_\lambda)L_{21}}{\Fm_2(\xi',\lambda)}\CM_0(x_N)\wht g(0)\right](x') \notag \\
&+\sum_{l=1}^{N-1}\CF_{\xi'}^{-1}\left[\frac{s_1s_2 i\xi_l t_1t_2(t_2+\omega_\lambda)}{\Fm_2(\xi',\lambda)}\CM_0(x_N)\wht h_l(0)\right](x'). 
\end{align*} 
Recalling $\rho=\rho_1+\rho_2$, one obtains
\begin{align}\label{rep:1}
\rho
&=\sum_{k=1}^2\CF_{\xi'}^{-1}\left[\frac{s_k (t_2+t_1)\Fp_k(\xi',\lambda)\Fn_k(\xi',\lambda)}{(s_2-s_1)\Fm_k(\xi',\lambda)}e^{-t_1 x_N}\wht g(0)\right](x') \\
&-\sum_{l=1}^{N-1}\CF_{\xi'}^{-1}\left[\frac{s_1s_2i\xi_l t_1(t_1+\omega_\lambda)}{\Fm_1(\xi',\lambda)}e^{-t_1 x_N}\wht h_l(0)\right](x') \notag \\
&+\CF_{\xi'}^{-1}\left[\frac{s_2(t_2+\omega_\lambda)L_{21}}{\Fm_2(\xi',\lambda)}\CM_0(x_N)\wht g(0)\right](x') \notag \\
&+\sum_{l=1}^{N-1}\CF_{\xi'}^{-1}\left[\frac{s_1s_2 i\xi_l t_1t_2(t_2+\omega_\lambda)}{\Fm_2(\xi',\lambda)}\CM_0(x_N)\wht h_l(0)\right](x'). \notag
\end{align}
Similarly, we observe by \eqref{171010_1} that, for $u_j$ $(j=1,\dots,N-1)$ and $u_N$, 
\begin{align}\label{rep:2}
u_j
&=\CF_{\xi'}^{-1}\left[e^{-\omega_\lambda x_N}\wht l_j(0)\right](x') \\
&-\sum_{k=1}^2\CF_{\xi'}^{-1}\left[\frac{i\xi_j(t_k+\omega_\lambda)L_{k1}}{\Fm_k(\xi',\lambda)}\CM_k(x_N)\wht g(0)\right](x') \notag \\
&+\sum_{k=1}^2\sum_{l=1}^{N-1}\CF_{\xi'}^{-1}\left[\frac{(-1)^k s_1s_2\xi_j\xi_l t_1t_2(t_k+\omega_\lambda)}{s_k\Fm_k(\xi',\lambda)}
\CM_k(x_N)\wht h_l(0)\right](x'), \notag \\
u_N
&=\sum_{k=1}^2\CF_{\xi'}^{-1}\left[\frac{t_k(t_k+\omega_\lambda)L_{k 1}}{\Fm_k(\xi',\lambda)}\CM_k(x_N)\wht g(0)\right](x') \notag \\
&+\sum_{k=1}^2\sum_{l=1}^{N-1}\CF_{\xi'}^{-1}\left[\frac{(-1)^k s_1s_2i\xi_l  t_1t_2t_k(t_k+\omega_\lambda)}
{s_k\Fm_k(\xi',\lambda)}\CM_k(x_N)\wht h_l(0)\right](x'). \notag
\end{align}

By Lemmas \ref{lemm:algebra}, \ref{lemm:symbol0}, and \ref{lemm:symbol1} and by Corollary \ref{coro:lopatinski},
the symbols of $\rho$, $u_J$ ($J=1,\dots,N$) satisfy the following conditions:
%Concerning $\rho$, one has
For $\rho$, there hold
\begin{align}\label{multi:rho}
\frac{s_k(t_2+t_1)\Fp_k(\xi',\lambda)\Fn_k(\xi',\lambda)}{(s_2-s_1)\Fm_k(\xi',\lambda)},
\ \frac{s_1s_2i\xi_l t_1(t_1+\omega_\lambda)}{\Fm_1(\xi',\lambda)}
&\in 
\BBM_{-1,1}(\BC_+), \\
\frac{s_2(t_2+\omega_\lambda)L_{21}}{\Fm_2(\xi',\lambda)},
\ \frac{s_1s_2i\xi_l t_1t_2(t_2+\omega_\lambda)}{\Fm_2(\xi',\lambda)}
&\in \BBM_{0,1}(\BC_+); \notag
\end{align} 
For $u_J$, there hold
\begin{align}\label{multi:u}
\frac{i\xi_j(t_k+\omega_\lambda)L_{k 1}}{\Fm_k(\xi',\lambda)},
\ \frac{(-1)^k s_1s_2\xi_j\xi_l t_1t_2(t_k+\omega_\lambda)}{s_k \Fm_k(\xi',\lambda)}
&\in\BBM_{1,1}(\BC_+), \\
\frac{t_k(t_k+\omega_\lambda)L_{k 1}}{\Fm_k(\xi',\lambda)},
\ \frac{(-1)^k s_1s_2i\xi_l  t_1 t_2t_k (t_k+\omega_\lambda)}{s_k\Fm_k(\xi',\lambda)}
&\in\BBM_{1,1}(\BC_+). \notag
\end{align}

%Finally, we show the existence of $\CR$-bounded solutions operator families associated with $\rho$, $u_J$ $(J=1,\dots,N)$.
Finally, combing \eqref{rep:1}-\eqref{multi:u} with Lemmas \ref{lemm:multiplier1}, \ref{lemm:multiplier2}, and \ref{prop:R}
shows the existence of solution operators $\CA^2(\lambda)$ and $\CB^2(\lambda)$ stated in Theorem \ref{theo:half-red}.
This completes the proof of Theorem \ref{theo:half-red} for Cases I and II.

%%%%%%%%%%%%%%%%%%%%%%%%%%%%%%%%%%%%%%%%%%%%%%%%%%%%%%%%%%%%%%%%%%%%%%
\section{Proof of Theorem \ref{theo:half-red} for Case III}\label{sec4}
This section proves Theorem \ref{theo:half-red} for Case III. 
Throughout this section, we assume that $\mu_*$, $\nu_*$, and $\kappa_*$ are positive constants satisfying
the condition of Case III.
One then recalls Lemmas \ref{lemm:roots_s} \eqref{lemm:roots_s3} and \ref{lemm:roots_t} \eqref{coro:roots_s3},
and considers the case $\mu_*>\nu_*$ only, i.e.
\begin{equation*}
t_1=\omega_\lambda=\sqrt{|\xi'|^2+\mu_*^{-1}\lambda}, \quad t_2=\sqrt{|\xi'|^2+\nu_*^{-1}\lambda},
\end{equation*}
which are often used in the following computations.
Let $J=1,\dots,N$ and $j=1,\dots,N-1$ in this section.

%%%%%%%%%%
\subsection{Solution formulas}\label{subsec4-1}
One first considers
\eqref{170821_1}-\eqref{160827_1} with \eqref{eq:4}, \eqref{eq:44}, and \eqref{eq:div-vph}
in order to derive solution formulas of \eqref{eq2:half-red}.
In view of \eqref{170821_1}, \eqref{160827_1}, and Lemma \ref{lemm:roots_P}, we look for solutions $\wht u_J$ and $\wht\vph$ of the forms:
\begin{align}
\wht u_J&=\alpha_J e^{-\omega_\lambda x_N}+\beta_J x_Ne^{-\omega_\lambda x_N}+\gamma_J(e^{-t_2 x_N}-e^{-\omega_\lambda x_N}), \label{eq:400} \\
\wht\vph&=\sigma e^{-\omega_\lambda x_N}+\tau e^{-t_2 x_N}. \label{eq:400_2}
\end{align}
It then holds by \eqref{eq:div-vph} that
%\begin{align*}
%\wht\vph
%&=
%(i\xi'\cdot\alpha'-\omega_\lambda \alpha _N+\beta_N-i\xi'\cdot\gamma'+\omega_\lambda\gamma_N)e^{-\omega_\lambda x_N} \\
%&+(i\xi'\cdot\beta'-\omega_\lambda\beta_N)x_N e^{-\omega_\lambda x_N} +(i\xi'\cdot\gamma'-t_2\gamma_N)e^{-t_2 x_N}. \notag
%\end{align*}
%Inserting this formula into \eqref{170821_1} furnishes
\begin{align}
\sigma&=i\xi'\cdot\alpha'-i\xi'\cdot\gamma'-\omega_\lambda\alpha_N+\beta_N+\omega_\lambda \gamma_N, \label{eq:402_3} \\
0&=i\xi'\cdot\beta'-\omega_\lambda\beta_N, \label{eq:402} \\
\tau&=i\xi'\cdot\gamma'-t_2 \gamma_N. \label{eq:402_5}
\end{align}
%and thus
%\begin{align}\label{eq:401}
%\wht\vph&=
%(i\xi'\cdot\alpha'-\omega_\lambda \alpha _N+\beta_N-i\xi'\cdot\gamma'+\omega_\lambda\gamma_N)e^{-\omega_\lambda x_N} \\
%&+(i\xi'\cdot\gamma'-t_2\gamma_N)e^{-t_2 x_N}. \notag
%\end{align}
On the other hand, by the assumption $\kappa_*=\mu_*\nu_*$, 
\begin{equation*}
\nu_*\lambda-\kappa_*(\pd_N^2-|\xi'|^2)=-\mu_*\nu_*(\pd_N^2-\omega_\lambda^2),
\end{equation*}
and thus \eqref{1023_18:eq5} and \eqref{1103_18:eq5} are respectively equivalent to 
\begin{align}\label{eq:420}
\lambda(\pd_N^2-\omega_\lambda^2)\wht u_j-\nu_* i \xi_j(\pd_N^2-\omega_\lambda^2)\wht\vph&=0, \\
\lambda(\pd_N^2-\omega_\lambda^2)\wht u_N-\nu_* \pd_N(\pd_N^2-\omega_\lambda^2)\wht\vph&=0. \notag
\end{align}
Here note that
$$
(\pd_N^2-\omega_\lambda^2)(x_N e^{-\omega_\lambda x_N})=-2\omega_\lambda e^{-\omega_\lambda x_N}.
$$
Inserting \eqref{eq:400} and \eqref{eq:400_2}, together with the last relation, into \eqref{eq:420} yields
%One then inserts \eqref{eq:400} and \eqref{eq:401} into these equations in order to obtain
\begin{alignat*}{2}
-2\lambda\omega_\lambda\beta_j&=0, \quad
(t_2^2-\omega_\lambda^2)(\lambda\gamma_j-\nu_*i\xi_j\tau)&&=0, \\
-2\lambda\omega_\lambda\beta_N&=0, \quad
(t_2^2-\omega_\lambda^2)(\lambda\gamma_N+\nu_* t_2\tau)&&=0,
\end{alignat*}
which, combined with $t_2\neq \omega_\lambda$, furnishes
\begin{align}
\beta_J&=0, \label{eq:403} \\
\lambda\gamma_j-\nu_*i\xi_j\tau&=0, \label{eq:404} \\
\lambda\gamma_N+\nu_* t_2\tau&=0. \label{eq:405}
\end{align}

\begin{rema}
The relation \eqref{eq:403} implies \eqref{eq:402}, 
while the relations \eqref{eq:404} and \eqref{eq:405} imply \eqref{eq:402_5}.
\end{rema}

One has by \eqref{eq:400}, \eqref{eq:402_3}, and \eqref{eq:403},
\begin{align}
\wht u_J&=\alpha_J e^{-\omega_\lambda x_N}+\gamma_J(e^{-t_2 x_N}-e^{-\omega_\lambda x_N}), \label{eq:406} \\ 
\sigma&=i\xi'\cdot\alpha'-i\xi'\cdot\gamma'-\omega_\lambda\alpha_N+\omega_\lambda \gamma_N, \label{eq:406_3}
\end{align}
and also
%\begin{align}
%\wht u_J&=\alpha_J e^{-\omega_\lambda x_N}+\gamma_J(e^{-t_2 x_N}-e^{-\omega_\lambda x_N}), \label{eq:406} \\
%\wht \vph&=(i\xi'\cdot\alpha'-\omega_\lambda \alpha _N-i\xi'\cdot\gamma'+\omega_\lambda\gamma_N)e^{-\omega_\lambda x_N}  \label{eq:407} \\
%&+(i\xi'\cdot\gamma'-t_2\gamma_N)e^{-t_2 x_N}.  \notag
%\end{align}
by \eqref{eq:404} and \eqref{eq:405}
\begin{equation}\label{eq:408}
\gamma_j =-\frac{i\xi_j}{t_2}\gamma_N. 
\end{equation}
%\begin{align}
%i\xi'\cdot\gamma'-t_2\gamma_N&=-\frac{\lambda}{\nu_*t_2}\gamma_N. \label{eq:409}
%\end{align}
Furthermore, \eqref{eq:408} yields
\begin{equation}\label{eq:410}
i\xi'\cdot\gamma'=\frac{|\xi'|^2}{t_2}\gamma_N.
\end{equation}

Next, we consider the boundary conditions. % \eqref{eq:4} and \eqref{eq:44}.
By \eqref{eq:4} and \eqref{eq:406},
\begin{equation}\label{eq:411}
\alpha_j=\wht h_j(0), \quad \alpha_N=0.
\end{equation}
It then holds by the first relation of \eqref{eq:411} that
\begin{equation*}%\label{eq:421}
i\xi'\cdot\alpha'=i\xi'\cdot\wht\Bh'(0), \quad \wht\Bh'(0)=(\wht h_1(0),\dots,\wht h_{N-1}(0))^\SST.
\end{equation*}
Combining this relation with \eqref{eq:406_3}, \eqref{eq:410}, and $\alpha_N=0$ of \eqref{eq:411} furnishes
\begin{equation}\label{eq:412_1}
\sigma=i\xi'\cdot\wht\Bh'(0)-\frac{|\xi'|^2}{t_2}\gamma_N+\omega_\lambda\gamma_N,
\end{equation}
while \eqref{eq:405} implies
\begin{equation}\label{eq:412_2}
\tau=-\frac{\nu_*^{-1}\lambda}{t_2}\gamma_N.
\end{equation}
One the other hand, by \eqref{eq:44} and \eqref{eq:400_2},
$$
-\omega_\lambda\sigma - t_2\tau= \lambda \wht g(0),
$$
%$$
%\pd_N\wht\vph(0)=-\omega_\lambda (i\xi'\cdot\alpha'-\omega_\lambda \alpha _N-i\xi'\cdot\gamma'+\omega_\lambda\gamma_N) 
%-t_2(i\xi'\cdot\gamma'-t_2\gamma_N), 
%$$
%which, combined with , \eqref{eq:406_3}, \eqref{eq:410}, \eqref{eq:421}, furnishes
%\begin{equation}\label{eq:412}
%\wht\vph=\left(i\xi'\cdot \wht h'(0)-\frac{|\xi'|^2}{t_2}\gamma_N+\omega_\lambda\gamma_N\right)e^{-\omega_\lambda x_N} 
%-\frac{\lambda}{\nu_* t_2}\gamma_N e^{-t_2 x_N}.
%\end{equation}
%This formula immediately yields
which, combined with \eqref{eq:412_1} and \eqref{eq:412_2}, furnishes
$$
-\omega_\lambda\left(i\xi'\cdot\wht\Bh'(0)-\frac{|\xi'|^2}{t_2}\gamma_N+\omega_\lambda\gamma_N\right)+\nu_*^{-1}\lambda\gamma_N
=\lambda\wht g(0).
$$
%\pd_N\wht\vph(0)=-\omega_\lambda\left(i\xi'\cdot\wht\Bh'(0)-\frac{|\xi'|^2}{t_2}\gamma_N+\omega_\lambda\gamma_N\right)+\frac{\lambda}{\nu_*}\gamma_N.
%Inserting this relation into \eqref{eq:44}, we have
Solving this equation with respect to $\gamma_N$, we have
\begin{equation*}
\gamma_N=\frac{t_2(\lambda\wht g(0)+\omega_\lambda i\xi'\cdot\wht\Bh'(0))}{\omega_\lambda|\xi'|^2-t_2\omega_\lambda^2+t_2\nu_*^{-1}\lambda}.
\end{equation*}
Here note that by $t_2^2=|\xi'|^2+\nu_*^{-1}\lambda$
\begin{align*}
\omega_\lambda|\xi'|^2-t_2\omega_\lambda^2+t_2\nu_*^{-1}\lambda
&=\omega_\lambda(t_2^2-\nu_*^{-1}\lambda)-t_2\omega_\lambda^2+t_2\nu_*^{-1}\lambda \\
&=(t_2-\omega_\lambda)(t_2\omega_\lambda+\nu_*^{-1}\lambda).
\end{align*}
The above formula of $\gamma_N$ is thus written as\footnote{
Estimates of $t_2\omega_\lambda+\nu_*^{-1}\lambda$ are given in Lemma \ref{lemm:multicaseIII} below. 
}
\begin{equation}\label{eq:413}
\gamma_N=\frac{t_2(\lambda\wht g(0)+\omega_\lambda i\xi'\cdot\wht\Bh'(0))}{(t_2-\omega_\lambda)(t_2\omega_\lambda+\nu_*^{-1}\lambda)}.
\end{equation}

Finally, one has, by \eqref{eq:400_2}, \eqref{eq:406}, \eqref{eq:408}, \eqref{eq:411}, \eqref{eq:412_1}, and \eqref{eq:412_2},
\begin{align*}
\wht u_j(x_N)
&= \wht h_j(0)e^{-\omega_\lambda x_N}-\frac{i\xi_j}{t_2}\gamma_N(e^{-t_2 x_N}-e^{-\omega_\lambda x_N}), \\
\wht u_N(x_N)
&=\gamma_N(e^{-t_2 x_N}-e^{-\omega_\lambda x_N}), \\
\wht\vph(x_N)
&=\left(i\xi'\cdot \wht \Bh'(0)-\frac{|\xi'|^2}{t_2}\gamma_N+\omega_\lambda\gamma_N\right)e^{-\omega_\lambda x_N} 
-\frac{\nu_*^{-1}\lambda}{t_2}\gamma_N e^{-t_2 x_N},
\end{align*}
and sets $\wht\rho(x_N)=-\lambda^{-1}\wht \vph(x_N)$ in view of \eqref{eq:1}.
Recall the inverse partial Fourier transform given in \eqref{defi:IPFT} and
set $\rho=\CF_{\xi'}^{-1}[\wht\rho(x_N)](x')$ and $u_J=\CF_{\xi'}^{-1}[\wht u_J(x_N)](x')$.
Then $\rho$ and $\Bu=(u_1,\dots, u_N)^\SST$ solve the system \eqref{eq2:half-red}.

%Then, by \eqref{eq:408} and \eqref{eq:413},
%\begin{equation}\label{eq:414}
%\gamma_j=-\frac{i\xi_j(\lambda\wh g(0)+\omega_\lambda i\xi'\cdot\wh h'(0))}{(t_2-\omega_\lambda)(t_2\om_\lambda+\nu_*^{-1}\lambda)}.
%\end{equation}
%Hence, we have
%\begin{align*}
%\wht\rho(x_N)
%&=-\frac{1}{\lambda}\left(i\xi'\cdot \wht h'(0)-\frac{|\xi'|^2}{t_2}\gamma_N+\omega_\lambda\gamma_N\right)e^{-\omega_\lambda x_N} 
%+\frac{1}{\nu_* t_2}\gamma_N e^{-t_2 x_N} \\
%\wht u_j(x_N)
%&= \wht h_j(0)e^{-\omega_\lambda x_N}-\frac{i\xi_j}{t_2}\gamma_N(e^{-t_2 x_N}-e^{-\omega_\lambda x_N}), \\
%\wht u_N(x_N)
%&=\gamma_N(e^{-t_2 x_N}-e^{-\omega_\lambda x_N}).
%\end{align*}
%Setting $\rho=\CF_{\xi'}^{-1}[\wht \rho(x_N)](x')$ and $u_J=\CF_{\xi'}^{-1}[\wht u_J(x_N)](x')$,
%we see that $\rho$ and $\BU=(u_1,\dots,u_N)^\SST$ solve the system \eqref{eq2:half-red}.

The last part of this subsection is devoted to the proof of the following lemma.

\begin{lemm}\label{lemm:multicaseIII}
%Assume that $\mu_*$, $\nu_*$, and $\kappa_*$ are positive constant satisfying the condition of Case III.
%Let $\xi'\in\BR^{N-1}$ and $\lambda\in\overline{\BC_+}\setminus\{0\}$.
%Then the following assertions hold true:
\begin{enumerate}[$(1)$]
\item
$t_2\omega_\lambda+\nu_*^{-1}\lambda\in\BBM_{2,1}(\BC_+)$.
\item
There is a positive constant $C_{\mu_*,\nu_*,\kappa_*}$ such that
\begin{equation}\label{Jan17:eq5}
|t_2\omega_\lambda+\nu_*^{-1}\lambda|\geq C_{\mu_*,\nu_*,\kappa_*}(|\lambda|^{1/2}+|\xi'|)^2
\end{equation}
for any $(\xi',\lambda)\in\BR^{N-1}\times\BC_+$.
\item
$(t_2\omega_\lambda+\nu_*^{-1}\lambda)^{-1}\in\BBM_{-2,1}(\BC_+)$.
\end{enumerate}
\end{lemm}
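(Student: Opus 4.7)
Part (1) is an immediate consequence of what is already established. Lemma \ref{lemm:symbol0}(3) gives $t_2, \omega_\lambda \in \BBM_{1,1}(\BC_+)$, so Lemma \ref{lemm:algebra}(1) yields $t_2\omega_\lambda \in \BBM_{2,1}(\BC_+)$; since $\nu_*^{-1}\lambda \in \BBM_{2,1}(\BC_+)$ as well, and $\BBM_{2,1}(\BC_+)$ is a vector space, the sum lies in $\BBM_{2,1}(\BC_+)$.

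For part (2), I will follow the homogeneity-plus-compactness template used in the proof of Lemma \ref{lemm:lopatinski}. The essential input is the non-vanishing of $t_2\omega_\lambda + \nu_*^{-1}\lambda$ on $\BR^{N-1} \times (\overline{\BC_+}\setminus\{0\})$. To verify this, note that both $t_2^2 = |\xi'|^2 + \nu_*^{-1}\lambda$ and $\omega_\lambda^2 = |\xi'|^2 + \mu_*^{-1}\lambda$ lie in $\overline{\BC_+}$, so by the chosen branch of the square root their arguments are in $[-\pi/4, \pi/4]$, whence $\Re(t_2\omega_\lambda) \geq 0$ and obviously $\Re(\nu_*^{-1}\lambda) \geq 0$. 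If the sum vanishes, both terms must be purely imaginary with cancelling imaginary parts; in particular $\Re\lambda = 0$, and $t_2\omega_\lambda$ purely imaginary forces $\arg t_2 = \arg\omega_\lambda = \pm\pi/4$ (with the same sign), which in turn requires $t_2^2$ and $\omega_\lambda^2$ to be purely imaginary, hence $\xi' = 0$. Writing $\lambda = i\tau$, a direct calculation gives $t_2\omega_\lambda + \nu_*^{-1}\lambda = i\bigl(\sqrt{\mu_*^{-1}\nu_*^{-1}} + \nu_*^{-1}\bigr)\tau$, which vanishes only when $\tau = 0$, contradicting $\lambda \neq 0$.

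Once non-vanishing is established, introduce the rescaled variables
\[
\wtd\xi' = (|\lambda|^{1/2}+|\xi'|)^{-1}\xi', \quad \wtd\lambda = (|\lambda|^{1/2}+|\xi'|)^{-2}\lambda,
\]
for which the homogeneity identities $\wtd t_2 = (|\lambda|^{1/2}+|\xi'|)^{-1}t_2$ and $\wtd\omega_\lambda = (|\lambda|^{1/2}+|\xi'|)^{-1}\omega_\lambda$ yield
\[
t_2\omega_\lambda + \nu_*^{-1}\lambda = (|\lambda|^{1/2}+|\xi'|)^2\bigl(\wtd t_2\wtd\omega_\lambda + \nu_*^{-1}\wtd\lambda\bigr).
\]
The pair $(\wtd\xi',\wtd\lambda)$ ranges over the compact set $K = \{(\wtd\xi',\wtd\lambda) \in \BR^{N-1}\times\overline{\BC_+} : |\wtd\xi'| + |\wtd\lambda|^{1/2} = 1\}$, which excludes the origin. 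By the preceding paragraph the right-hand factor is continuous and nowhere zero on $K$, hence attains a positive minimum $C_{\mu_*,\nu_*,\kappa_*}$; this gives \eqref{Jan17:eq5}.

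Part (3) then follows by the same Bell's-formula argument as in Corollary \ref{coro:lopatinski}: combining (1) and (2) produces the estimate $|\pd_{\xi'}^{\alpha'}(t_2\omega_\lambda + \nu_*^{-1}\lambda)^r| \leq C(|\lambda|^{1/2}+|\xi'|)^{2r-|\alpha'|}$ for any $r \in \BR$ and multi-index $\alpha'$, and Leibniz's rule applied to the identity $\lambda\frac{d}{d\lambda}(t_2\omega_\lambda + \nu_*^{-1}\lambda)^{-1} = -(t_2\omega_\lambda+\nu_*^{-1}\lambda)^{-2}\lambda\frac{d}{d\lambda}(t_2\omega_\lambda+\nu_*^{-1}\lambda)$ then verifies the $\BBM_{-2,1}(\BC_+)$ bounds for both $n=0$ and $n=1$. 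The only non-routine step is the boundary-case analysis for non-vanishing in (2), particularly on the imaginary axis $\Re\lambda = 0$; once that is settled, the rest is a direct adaptation of the methods already deployed in the paper.
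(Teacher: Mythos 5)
Your proposal is correct, and parts (1) and (3) are argued exactly as in the paper. Where you diverge is part (2): the paper argues directly by computing
$\Re\bigl(\nu_*^{-1}\lambda/t_2\bigr)=\frac{(\Re t_2)(|t_2|^4-|\xi'|^4)}{|t_2|^2(|t_2|^2+|\xi'|^2)}$
and showing $|t_2|^4-|\xi'|^4=\nu_*^{-2}|\lambda|^2+2|\xi'|^2\nu_*^{-1}\Re\lambda\geq\nu_*^{-2}|\lambda|^2>0$, whence $\Re(\nu_*^{-1}\lambda/t_2)>0$, so that
$|t_2\omega_\lambda+\nu_*^{-1}\lambda|=|t_2|\,\bigl|\omega_\lambda+\nu_*^{-1}\lambda/t_2\bigr|\geq(\Re t_2)(\Re\omega_\lambda)$
and Lemma \ref{lemm:symbol0} gives \eqref{Jan17:eq5} with an explicit constant. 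You instead prove non-vanishing on $\BR^{N-1}\times(\overline{\BC_+}\setminus\{0\})$ by a boundary-case analysis and then invoke degree-$2$ homogeneity plus compactness, in the spirit of the proof of Lemma \ref{lemm:lopatinski} (and in fact more cleanly, since exact homogeneity here lets you reduce to a single compact set rather than three cases). Both routes are sound; the paper's is shorter and produces a constant that is an explicit product of the constants in Lemma \ref{lemm:symbol0}, whereas yours is a more general template and would apply verbatim to other homogeneous symbols. Two small points you should make explicit: (i) in the boundary analysis, $t_2\omega_\lambda=0$ can be excluded trivially, since then the sum equals $\nu_*^{-1}\lambda\neq 0$ — this is needed to pass from ``$\Re(t_2\omega_\lambda)=0$'' to ``$\arg(t_2\omega_\lambda)=\pm\pi/2$''; (ii) the compact set $K$ contains points with $\wtd\lambda=0$, which lie outside the non-vanishing set you established in the preceding paragraph, so you should note separately that at $\wtd\lambda=0$ the rescaled factor equals $|\wtd\xi'|^2=1\neq 0$ on $K$. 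With those added sentences the argument is complete.
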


\begin{proof}
(1) The required property follows from Lemmas \ref{lemm:algebra} and \ref{lemm:symbol0} immediately.

(2). First, let us prove 
\begin{equation}\label{Jan17:eq3}
\Re\left(\frac{\nu_*^{-1}\lambda}{t_2}\right)>0.
\end{equation}

By $t_2^2=|\xi'|^2+\nu_*^{-1}\lambda$,
$$
\frac{\nu_*^{-1}\lambda}{t_2}=\frac{t_2^2-|\xi'|^2}{t_2}=t_2-\frac{|\xi'|^2}{|t_2|^2}\overline{t_2},
$$
which implies
\begin{equation}\label{Jan17:eq2}
\Re\left(\frac{\nu_*^{-1}\lambda}{t_2}\right)
=(\Re t_2)\left(1-\frac{|\xi'|^2}{|t_2|^2}\right)
%=\frac{\Re (t_2)}{|t_2|^2}\left(|t_2|^2-|\xi'|^2\right)
=\frac{(\Re t_2)(|t_2|^4-|\xi'|^4)}{|t_2|^2(|t_2|^2+|\xi'|^2)}.
\end{equation}
%\begin{align*}
%\Re\left(\frac{\nu_*^{-1}\lambda}{t_2}\right)
%&=\Re (t_2)-\frac{|\xi'|^2}{|t_2|^2}\Re(\overline{t_2})
%=\Re (t_2)\left(1-\frac{|\xi'|^2}{|t_2|^2}\right) \\
%&=\frac{\Re (t_2)}{|t_2|^2}\left(|t_2|^2-|\xi'|^2\right) \\
%&=\frac{\Re (t_2)}{|t_2|^2(|t_2|^2+|\xi'|^2)}\left(|t_2|^4-|\xi'|^4\right).
%\end{align*}
One here observes by $|t_2|^4=||\xi'|^2+\nu_*^{-1}\lambda|^2$ that
\begin{align*}
|t_2|^4-|\xi'|^4
%&=||\xi'|^2+\nu_*^{-1}\lambda|^2-|\xi'|^4 \\
&=(|\xi'|^2+\nu_*^{-1}\Re\lambda)^2+(\nu_*^{-1}\Im\lambda)^2-|\xi'|^4 \\
&=\nu_*^{-2}|\lambda|^2+2|\xi'|^2\nu_*^{-1}\Re\lambda, 
\end{align*}
which, combined with $\Re\lambda>0$, furnishes
$$
|t_2|^4-|\xi'|^4\geq \nu_*^{-2}|\lambda|^2.
$$
Since $\Re t_2>0$ by Lemma \ref{lemm:roots_P}, the last inequality and \eqref{Jan17:eq2} imply \eqref{Jan17:eq3}.

Next, we prove \eqref{Jan17:eq5}. By \eqref{Jan17:eq3},
$$
\left|\omega_\lambda+\frac{\nu_*^{-1}\lambda}{t_2}\right|\geq \Re\left(\omega_\lambda+\frac{\nu_*^{-1}\lambda}{t_2}\right)
\geq \Re\omega_\lambda,
$$
which, combined with Lemma \ref{lemm:symbol0}, furnishes
\begin{align*}
|t_2\omega_\lambda+\nu_*^{-1}\lambda|
&=|t_2|\left|\omega_\lambda+\frac{\nu_*^{-1}\lambda}{t_2}\right| \\
&\geq (\Re t_2)(\Re \omega_\lambda) \\
&\geq C_{\mu_*,\nu_*,\kappa_*}(|\lambda|^{1/2}+|\xi'|)^2.
\end{align*}
This completes the proof of \eqref{Jan17:eq5}.

(3). The required property follows from (1) and (2) %and Bell's formula
in the same manner as one has proved Corollary \ref{coro:lopatinski} from Lemmas \ref{lemm:symbol1} and \ref{lemm:lopatinski},
so that the detailed proof may be omitted.
\end{proof}

%%%%%%%%%%
\subsection{Proof of Theorem \ref{theo:half-red}}\label{subsec4-2}
This subsection proves Theorem \ref{theo:half-red} by means of results obtained in Subsection \ref{subsec4-1}.
Let $\CM(x_N)$ be given in \eqref{stkernel:2}.

First, one considers the formula of $\rho$, which is written as
\begin{align*}
\rho
&=\CF_{\xi'}^{-1}
\left[\left\{
-\frac{1}{\lambda}\bigg(i\xi'\cdot \wht\Bh'(0)-\frac{|\xi'|^2}{t_2}\gamma_N+\omega_\lambda\gamma_N\bigg)+\frac{\nu_*^{-1}}{ t_2}\gamma_N
\right\} e^{-\omega_\lambda x_N}\right](x') \\
&+\CF_{\xi'}^{-1}\left[\frac{\nu_*^{-1}}{t_2}\gamma_N(e^{-t_2 x_N}-e^{-\omega_\lambda x_N})\right](x') \\
&=:\rho_1+\rho_2.
\end{align*}
The relation $t_2^2=|\xi'|^2+\nu_*^{-1}\lambda$ yields
\begin{equation*}%\label{eq:414}
\rho_1=\CF_{\xi'}^{-1}
\left[\frac{1}{\lambda}\left\{-i\xi'\cdot\wht\Bh'(0)+(t_2-\omega_\lambda)\gamma_N\right\}e^{-\omega_\lambda x_N}\right](x').
\end{equation*}
Since it holds by \eqref{eq:413} that
\begin{equation*}
-i\xi'\cdot\wht\Bh'(0)+(t_2-\omega_\lambda)\gamma_N 
=\lambda\left(\frac{t_2}{t_2\omega_\lambda +\nu_*^{-1}\lambda}\wht g(0)-\frac{\nu_*^{-1}}{t_2\omega_\lambda+\nu_*^{-1}\lambda}i\xi'\cdot\wht\Bh'(0)\right)
\end{equation*}
the above formula of $\rho_1$ is reduced to
\begin{align*}
\rho_1
&=\CF_{\xi'}^{-1}\left[\left(\frac{t_2}{t_2\omega_\lambda +\nu_*^{-1}\lambda}\right)e^{-\omega_\lambda x_N}\wht g(0)\right](x') \\
&-\sum_{k=1}^{N-1}\left[\left(\frac{\nu_*^{-1}i\xi_k}{t_2\omega_\lambda+\nu_*^{-1}\lambda}\right)e^{-\omega_\lambda x_N}\wht h_k(0)\right](x').
\end{align*}
On the other hand, by \eqref{eq:413},
\begin{align*}
\rho_2
&=\CF_{\xi'}^{-1}
\left[\left(\frac{\nu_*^{-1}\lambda}{t_2\omega_\lambda+\nu_*^{-1}\lambda}\right)\CM(x_N)\,\wht g(0)\right](x') \\
&+\sum_{k=1}^{N-1}\CF_{\xi'}^{-1}\left[\left(\frac{\nu_*^{-1}\omega_\lambda i\xi_k}{t_2\omega_\lambda+\nu_*^{-1}\lambda}\right)\CM(x_N)\,\wht h_k(0)\right](x').
\end{align*}
Recalling $\rho=\rho_1+\rho_2$, one obtains
\begin{align}\label{solIII:rho}
\rho
&=\CF_{\xi'}^{-1}\left[\left(\frac{t_2}{t_2\omega_\lambda +\nu_*^{-1}\lambda}\right)e^{-\omega_\lambda x_N}\wht g(0)\right](x') \\
&-\sum_{k=1}^{N-1}\left[\left(\frac{\nu_*^{-1}i\xi_k}{t_2\omega_\lambda+\nu_*^{-1}\lambda}\right)e^{-\omega_\lambda x_N}\wht h_k(0)\right](x') \notag \\
&+\CF_{\xi'}^{-1}\left[\left(\frac{\nu_*^{-1}\lambda}{t_2\omega_\lambda+\nu_*^{-1}\lambda}\right)\CM(x_N)\,\wht g(0)\right](x') \notag \\
&+\sum_{k=1}^{N-1}\CF_{\xi'}^{-1}\left[\left(\frac{\nu_*^{-1}\omega_\lambda i\xi_k}{t_2\omega_\lambda+\nu_*^{-1}\lambda}\right)\CM(x_N)\,\wht h_k(0)\right](x'). \notag
\end{align}

Next, we consider the formulas of $u_J$. By \eqref{eq:413},
\begin{align}\label{solIII:u}
u_j&=\CF_{\xi'}^{-1}\left[e^{-\omega_\lambda x_N}\wht h_j(0)\right](x') \\
&-\CF_{\xi'}^{-1}\left[\left(\frac{i\xi_j\lambda}{t_2\omega_\lambda+\nu_*^{-1}\lambda}\right)\CM(x_N)\,\wht g(0)\right](x') \notag \\
&+ \sum_{k=1}^{N-1}\CF_{\xi'}^{-1}\left[\left(\frac{\xi_j\xi_k\omega_\lambda}{t_2\omega_\lambda+\nu_*^{-1}\lambda}\right)\CM(x_N)\,\wht h_k(0)\right] (x'), \notag \\
u_N&=
\CF_{\xi'}^{-1}\left[\frac{t_2\lambda}{t_2\omega_\lambda+\nu_*^{-1}\lambda}\CM(x_N)\,\wht g(0)\right](x') \notag \\
&+\sum_{k=1}^{N-1}\CF_{\xi'}^{-1}\left[\frac{t_2\omega_\lambda i\xi_k}{t_2\omega_\lambda+\nu_*^{-1}\lambda}\CM(x_N)\,\wht h_k(0)\right](x'). \notag
\end{align}

By Lemmas \ref{lemm:algebra}, \ref{lemm:symbol0}, and \ref{lemm:multicaseIII}, the symbols of $\rho$ and $u_J$ satisfy the following condition:
%Concerning $\rho$, one has
For $\rho$, there hold
\begin{align}\label{symbolIII:rho}
\frac{t_2}{t_2\omega_\lambda +\nu_*^{-1}\lambda}, 
\ \frac{\nu_*^{-1}i\xi_k}{t_2\omega_\lambda+\nu_*^{-1}\lambda}
&\in\BBM_{-1,1}(\BC_+), \\
\frac{\nu_*^{-1}\lambda}{t_2\omega_\lambda+\nu_*^{-1}\lambda},
\ \frac{\nu_*^{-1}\omega_\lambda i\xi_k}{t_2\omega_\lambda+\nu_*^{-1}\lambda}
&\in\BBM_{0,1}(\BC_+); \notag 
\end{align}
%Concerning $u_J$, one has
For $u_J$, there hold
\begin{align}\label{symbolIII:u}
1&\in\BBM_{0,1}(\BC_+), \\
\frac{i\xi_j\lambda}{t_2\omega_\lambda+\nu_*^{-1}\lambda}, 
\ \frac{\xi_j\xi_k\omega_\lambda}{t_2\omega_\lambda+\nu_*^{-1}\lambda}
&\in \BBM_{1,1}(\BC_+), \notag \\
\frac{t_2\lambda}{t_2\omega_\lambda+\nu_*^{-1}\lambda},
\ \frac{t_2\omega_\lambda i\xi_k}{t_2\omega_\lambda+\nu_*^{-1}\lambda}
&\in \BBM_{1,1}(\BC_+). \notag
\end{align}

Finally, combining \eqref{solIII:rho}-\eqref{symbolIII:u} with Lemmas \ref{lemm:multiplier1}, \ref{lemm:multiplier3}, and \ref{prop:R}
shows the existence of solution operators $\CA^2(\lambda)$ and $\CB^2(\lambda)$ stated in Theorem \ref{theo:half-red}.
This completes the proof of Theorem \ref{theo:half-red} for Case III.

%%%%%%%%%%%%%%%%%%%%%%%%%%%%%%%%%%%%%%%%%%%%%%%%%%%%%%%%%%%%%%%%%%%%%%
\section{Proof of Theorem \ref{theo:half-red} for Case IV}\label{sec5}
This section proves Theorem \ref{theo:half-red} for Case IV.
Throughout this section, we assume that $\mu_*$, $\nu_*$, and $\kappa_*$ are positive constants satisfying
the condition of Case IV.
One then recalls Lemmas \ref{lemm:roots_s} \eqref{lemm:roots_s4} and \ref{lemm:roots_t} \eqref{coro:roots_s4}, i.e.
\begin{equation*}
\mu_*\neq \nu_*, \quad 
t_1=t_2=\sqrt{|\xi'|^2+\left(\frac{\mu_*+\nu_*}{2\kappa_*}\right)\lambda}, \quad t_2\neq \omega_\lambda,
\end{equation*}
which are often used in the following computations.
Let $J=1,\dots,N$ and $j=1,\dots,N-1$ in this section.

%%%%%%%%%%
\subsection{Solution formulas}\label{subsec5-1}
One first considers
\eqref{170821_1}-\eqref{160827_1} with \eqref{eq:4}, \eqref{eq:44}, and \eqref{eq:div-vph}
in order to derive solution formulas of \eqref{eq2:half-red}.
In view of \eqref{170821_1}, \eqref{160827_1}, and Lemma \ref{lemm:roots_P}, we look for solutions $\wht u_J$ and $\wht\vph$ of the forms:
\begin{align}
\wht u_J
&=\alpha_J e^{-\omega_\lambda x_N}+\beta_J(e^{-t_2 x_N}-e^{-\omega_\lambda x_N})+\gamma_J x_N e^{-t_2 x_N}, \label{eq:200} \\
\wht \vph
&=\sigma e^{-t_2 x_N}+\tau x_N e^{-t_2 x_N}. \label{eq:200_3}
\end{align}
%Then,
%\begin{align}\label{eq:201}
%\pd_N \wht u_J&=(-\omega_\lambda \alpha_J+\omega_\lambda \beta_J) e^{-\omega_\lambda x_N}+(-t_*\beta_J+\gamma_J)e^{-t_+ x_N}  \\
%&-t_*\gamma_J x_N e^{-t_* x_N},\notag
%\end{align}
%which implies
It then holds by \eqref{eq:div-vph} that
\begin{align}
0&=i\xi'\cdot\alpha'-i\xi'\cdot\beta'-\omega_\lambda \alpha_N+\omega_\lambda \beta_N, \label{eq:203} \\
\sigma&=i\xi'\cdot\beta'-t_2\beta_N+\gamma_N, \label{eq:203_3} \\
\tau&=i\xi'\cdot\gamma'-t_2\gamma_N. \label{eq:203_5}
\end{align}
%It then holds that
%\begin{align}\label{eq:202}
%\wht\vph&=(i\xi'\cdot\alpha'-i\xi'\cdot\beta'-\omega_\lambda \alpha_N+\omega_\lambda \beta_N)e^{-\omega_\lambda x_N} \\
%&+(i\xi'\cdot\beta'-t_2\beta_N+\gamma_N)e^{-t_2 x_N} \notag \\
%&+(i\xi'\cdot\gamma'-t_2\gamma_N)x_N e^{-t_2 x_N}, \notag
%\end{align}
%where $i\xi'\cdot a'=\sum_{j=1}^{N-1}i\xi_j a_j$ for $a\in\{\alpha,\beta,\gamma\}$
Here note that
\begin{align*}
(\pd_N^2-\omega_\lambda)^2 \wht u_J
&=\beta_J(t_2^2-\omega_\lambda^2)e^{-t_2 x_N}
+\gamma_J\{-2t_2 e^{-t_2 x_N}+(t_2^2-\omega_\lambda^2)x_N e^{-t_2 x_N}\}, \\
(\pd_N^2-|\xi'|^2)\wht\vph
&=\sigma(t_2^2-|\xi'|^2)e^{-t_2 x_N}+\tau\{-2t_2 e^{-t_2 x_N}+(t_2^2-|\xi'|^2)x_N e^{-t_2 x_N}\}, \\
\pd_N(\pd_N^2-|\xi'|^2)\wht\vph
&=-\sigma t_2(t_2^2-|\xi'|^2)e^{-t_2 x_N} \\
&+\tau\{2 t_2^2 e^{-t_2 x_N}+(t_2^2-|\xi'|^2)e^{-t_2 x_N}-t_2(t_2^2-|\xi'|^2)x_N e^{-t_2 x_N}\},
\end{align*}
and also
$$
\pd_N\wht\vph=(-\sigma t_2+\tau)e^{-t_2 x_N}-\tau t_2 x_N e^{-t_2 x_N}.
$$
Inserting \eqref{eq:200} and \eqref{eq:200_3}, together with the last four relations, into \eqref{1023_18:eq5} and \eqref{1103_18:eq5} 
furnishes that
\begin{align}
\mu_*\lambda\{(t_2^2-\omega_\lambda^2)\beta_j-2 t_2 \gamma_j\}
+i\xi_j \sigma\{\nu_*\lambda-\kappa_*(t_2^2-|\xi'|^2)\}   
+2\kappa_*i\xi_j t_2\tau&=0,  \label{eq:204} \\
\mu_*\lambda(t_2^2-\omega_\lambda^2)\gamma_j+i\xi_j\tau\{\nu_*\lambda-\kappa_*(t_2^2-|\xi'|^2)\} &=0, \label{eq:205} \\
\mu_*\lambda\{(t_2^2-\omega_\lambda^2)\beta_N-2t_2\gamma_N\}  
+(-t_2\sigma+\tau)\{\nu_*\lambda-\kappa_*(t_2^2-|\xi'|^2)\}-2\kappa_* t_2^2\tau&=0,  \label{eq:220} \\
\mu_*\lambda(t_2^2-\omega_\lambda^2)\gamma_N-t_2\tau\{\nu_*\lambda-\kappa_*(t_2^2-|\xi'|^2)\}&=0. \label{eq:221}
\end{align}
%\begin{align}
%\mu_*\lambda\{(t_2^2-\omega_\lambda^2)\beta_j-2 t_2 \gamma_j\}
%+\nu_*\lambda i\xi_j (i\xi'\cdot\beta'-t_2\beta_N+\gamma_N) \label{eq:204} \\
%-\kappa_* i\xi_j \{(t_2^2-|\xi'|^2)(i\xi'\cdot\beta'-t_2\beta_N+\gamma_N)-2 t_2(i\xi'\cdot\gamma'-t_2\gamma_N)\}&=0,  \notag \\
%\mu_*\lambda(t_2^2-\omega_\lambda^2)\gamma_j+\nu_*\lambda i\xi_j(i\xi'\cdot\gamma'-t_2\gamma_N) \label{eq:205} \\
%-\kappa_* i\xi_j(t_2^2-|\xi'|^2)(i\xi'\cdot\gamma'-t_2\gamma_N)&=0, \notag \\
%\mu_*\lambda\{(t_2^2-\omega_\lambda^2)\beta_N-2t_2\gamma_N\} \label{eq:220} \\
%+\nu_*\lambda\{-t_2(i\xi'\cdot\beta'-t_2\beta_N+\gamma_N)+(i\xi'\cdot\gamma'-t_2\gamma_N)\} \notag \\
%-\kappa_*\{-t_2(t_2^2-|\xi'|^2)(i\xi'\cdot\beta'-t_2\beta_N+\gamma_N)+2 t_2^2(i\xi'\cdot\gamma'-t_2\gamma_N) \notag \\
%+(t_2^2-|\xi'|^2)(i\xi'\cdot\gamma'-t_2\gamma_N)\}&=0, \notag \\
%\mu_*\lambda(t_2^2-\omega_\lambda^2)\gamma_N-\nu_*\lambda t_2 (i\xi'\cdot\gamma'-t_2\gamma_N) \label{eq:221} \\
%+\kappa_* t_2(t_2^2-|\xi'|^2)(i\xi'\cdot\gamma'-t_2\gamma_N)&=0.
%\notag
%\end{align}
%Especially, by \eqref{eq:202} and \eqref{eq:203},
%\begin{equation}\label{eq:215}
%\wht\vph=(i\xi'\cdot\beta'-t_2\beta_N+\gamma_N)e^{-t_2 x_N} +(i\xi'\cdot\gamma'-t_2\gamma_N)x_N e^{-t_2 x_N}. 
%\end{equation}
%On the other hand, since %it holds by $t_2^2=|\xi'|^2+(\mu_*+\nu_*)\lambda/(2\kappa_*)$ that
It then holds by $t_2^2=|\xi'|^2+(\mu_*+\nu_*)\lambda/(2\kappa_*)$ that
\begin{equation*}
\nu_*\lambda-\kappa_*(t_2^2-|\xi'|^2)=\frac{(\nu_*-\mu_*)\lambda}{2},
\end{equation*}
which, inserted into \eqref{eq:204}-\eqref{eq:221}, furnishes
\begin{align}
2\mu_*\lambda\{(t_2^2-\omega_\lambda^2)\beta_j-2 t_2\gamma_j \}  
+  i\xi_j \sigma(\nu_*-\mu_*)\lambda +4\kappa_*i\xi_j t_2\tau &=0, \label{eq:206} \\
2\mu_*(t_2^2-\omega_\lambda^2)\gamma_j + i\xi_j \tau(\nu_*-\mu_*) &=0, \label{eq:207} \\
2\mu_*\lambda\{(t_2^2-\omega_\lambda^2)\beta_N-2 t_2\gamma_N\}
+ (-t_2\sigma+\tau)(\nu_*-\mu_*)\lambda -4\kappa_*t_2^2\tau&=0, \label{eq:212} \\
2\mu_*(t_2^2-\omega_\lambda^2)\gamma_N-t_2\tau(\nu_*-\mu_*)&=0. \label{eq:211} 
\end{align}
By \eqref{eq:207} and \eqref{eq:211}, 
\begin{equation*}
2\mu_*(t_2^2-\omega_\lambda^2)(t_2\gamma_j+i\xi_j\gamma_N)=0,
\end{equation*}
which, combined with $t_2\neq \omega_\lambda$, furnishes
\begin{equation}\label{eq:213}
\gamma_j=-\frac{i\xi_j}{t_2}\gamma_N.
\end{equation}
This relation yields
\begin{equation*}%\label{eq:214}
i\xi'\cdot\gamma' =\frac{|\xi'|^2}{t_2}\gamma_N, 
\end{equation*}
and thus by \eqref{eq:203_5}
\begin{equation}\label{eq:214_2}
\tau=i\xi'\cdot\gamma'-t_2\gamma_N=-\left(\frac{t_2^2-|\xi'|^2}{t_2}\right)\gamma_N
=-\left(\frac{\mu_*+\nu_*}{2\kappa_*}\right)\frac{\lambda}{t_2}\gamma_N. 
\end{equation}
On the other hand, one multiplies \eqref{eq:206} by $t_2$ and \eqref{eq:212} by $i\xi_j$,
and sum the resultant equations in order to obtain
$$
2\mu_* t_2 \{(t_2^2-\omega_\lambda^2)\beta_j-2t_2\gamma_j\}
+2\mu_* i\xi_j \{(t_2^2-\omega_\lambda^2)\beta_N-2t_2\gamma_N\} 
+ i\xi_j \tau(\nu_*-\mu_*)=0.
$$
Inserting \eqref{eq:213} into the last equation furnishes
\begin{equation*}
2\mu_* t_2(t_2^2-\omega_\lambda^2)\beta_j
=-2\mu_*  i \xi_j  (t_2^2-\omega_\lambda^2)\beta_N
-i\xi_j\tau(\nu_*-\mu_*),
\end{equation*}
which, combined with $t_2\neq\omega_\lambda$, yields
\begin{equation}\label{eq:230_1}
\beta_j=-\frac{i\xi}{t_2}\beta_N-\frac{i\xi_j}{t_2}
\left(\frac{\tau}{t_2^2-\omega_\lambda^2}\right)\left(\frac{\nu_*-\mu_*}{2\mu_*}\right).
\end{equation}
One here notes by the assumption $\eta_*=0$
\begin{equation}\label{eq:230_3}
\frac{\mu_*+\nu_*}{2\kappa_*}=\frac{2}{\mu_*+\nu_*},
\end{equation}
and thus one observes, by $t_2^2=|\xi'|^2+(\mu_*+\nu_*)\lambda/(2\kappa_*)$ and $\omega_\lambda^2=|\xi'|^2+\mu_*^{-1}\lambda$,
\begin{equation}\label{eq:230_2}
t_2^2-\omega_\lambda^2=\left(\frac{\mu_*+\nu_*}{2\kappa_*}-\frac{1}{\mu_*}\right)\lambda
=-\frac{(\nu_*-\mu_*)}{\mu_*(\mu_*+\nu_*)}\lambda.
\end{equation}
Then, by $\mu_*\neq\nu_*$, \eqref{eq:214_2}, \eqref{eq:230_3}, and \eqref{eq:230_2},
\begin{align*}
\frac{\tau}{t_2^2-\omega_\lambda^2}
&=\left\{-\frac{(\nu_*-\mu_*)}{\mu_*(\mu_*+\nu_*)}\lambda\right\}^{-1}
\left\{-\left(\frac{2}{\mu_*+\nu_*}\right)\frac{\lambda}{t_2}\gamma_N\right\} \\
&=\left(\frac{2\mu_*}{\nu_*-\mu_*}\right)\frac{1}{t_2}\gamma_N,
\end{align*}
which, combined with \eqref{eq:230_1}, furnishes
\begin{equation}\label{eq:230}
\beta_j=-\frac{i\xi}{t_2}\beta_N-\frac{i\xi_j}{t_2^2}\gamma_N.
\end{equation}
This relation yields
\begin{equation}\label{eq:216_2}
i\xi'\cdot\beta'=\frac{|\xi'|^2}{t_2}\beta_N+\frac{|\xi'|^2}{t_2^2}\gamma_N, 
\end{equation}
and thus by \eqref{eq:203_3} 
\begin{equation}\label{eq:216_5}
\sigma=i\xi'\cdot\beta'-t_2\beta_N+\gamma_N=-\left(\frac{t_2^2-|\xi'|^2}{t_2}\right)\beta_N+\left(\frac{t_2^2+|\xi'|^2}{t_2^2}\right)\gamma_N.
\end{equation}

%On the other hand,
%multiplying \eqref{eq:206} by $i\xi_j$ and summing the resultant equalities with respect to $j=1,\dots,N-1$ yield
%\begin{align*}
%2\mu_*\lambda\{(t_2^2-\omega_\lambda^2)i\xi'\cdot\beta'-2 t_2 i\xi'\cdot\gamma'\} 
%-(\nu_*-\mu_*)\lambda|\xi'|^2(i\xi'\cdot\beta'-t_2\beta_N+\gamma_N)& \\
%-4\kappa_*|\xi'|^2 t_2(i\xi'\cdot\gamma'-t_2\gamma_N)&=0.
%\end{align*}
%Inserting \eqref{eq:214} and \eqref{eq:214_2} into this equality, we have
%\begin{align*}
%\{2\mu_*\lambda(t_2^2-\omega_\lambda^2)-(\nu_*-\mu_*)\lambda|\xi'|^2\}i\xi'\cdot\beta' +(\nu_*-\mu_*)\lambda|\xi'|^2 t_2\beta_N& \\
%+(\nu_*-\mu_*)\lambda|\xi'|^2\gamma_N&=0.
%\end{align*}
%It thus holds that
%\begin{align}\label{eq:216}
%&\{2\mu_*(t_2^2-\omega_\lambda^2)-(\nu_*-\mu_*)|\xi'|^2\}i\xi'\cdot\beta'  \\
%&=-(\nu_*-\mu_*)|\xi'|^2t_2\beta_N-(\nu_*-\mu_*)|\xi'|^2\gamma_N. \notag
%\end{align}
%Since
%\begin{equation*}
%\frac{\mu_*+\nu_*}{2\kappa_*}-\frac{1}{\mu_*}=\frac{2}{\mu_*+\nu_*}-\frac{1}{\mu_*}=-\frac{\nu_*-\mu_*}{\mu_*(\mu_*+\nu_*)},
%\end{equation*}
%we note that
%\begin{align}\label{eq:216_3}
%&2\mu_*(t_2^2-\omega_\lambda^2)-(\nu_*-\mu_*)|\xi'|^2 \\
%&=-2\mu_*\frac{(\nu_*-\mu_*)}{\mu_*(\mu_*+\nu_*)} \lambda -(\nu_*-\mu_*)|\xi'|^2 \notag \\
%&=-(\nu_*-\mu_*)\left(|\xi'|^2+\frac{2}{\mu_*+\nu_*}\lambda\right) \notag \\
%&=-(\nu_*-\mu_*) t^2. \notag
%\end{align}
%Thus \eqref{eq:216} can be written as
%%\begin{equation}\label{eq:216_2}
%%t_2^2i\xi'\cdot\beta=|\xi'|^2 t_2\beta_N+|\xi'|^2\gamma_N. 
%%\end{equation}

Next, we consider the boundary conditions.
By \eqref{eq:4} and \eqref{eq:200},
\begin{equation}\label{eq:217}
\alpha_j=\wht h_j(0), \quad \alpha_N=0.
\end{equation}
It then holds by the first relation of \eqref{eq:217} that
\begin{equation}\label{eq:217_2}
i\xi'\cdot\alpha'=i\xi'\cdot\wht \Bh'(0), \quad \wht\Bh'(0)=(\wht h_1(0),\dots,\wht h_{N-1}(0))^\SST.
\end{equation}
On the other hand, by \eqref{eq:200_3}, %\eqref{eq:230}, 
\begin{equation*}
\pd_N\wht\vph(0)=-t_2\sigma+\tau,
\end{equation*}
which, combined with \eqref{eq:44}, \eqref{eq:214_2}, \eqref{eq:216_5}, and $t_2^2=|\xi'|^2+(\mu_*+\nu_*)\lambda/(2\kappa_*)$, furnishes that %by $t_2^2=|\xi'|^2+(\mu_*+\nu_*)\lambda/(2\kappa_*)$
\begin{equation}\label{eq:219_2}
(t_2^2-|\xi'|^2)\beta_N-2 t_2\gamma_N=\lambda\wht g(0).
\end{equation}

%
%\begin{equation}\label{eq:219}
%-2\mu_*(t_2-\omega_\lambda)(t_2+\omega_\lambda)\beta_N-2(\nu_*-\mu_*)t_2\gamma_N =(\nu_*-\mu_*)\lambda \wht g(0).
%\end{equation}
%
%
%
%
%implies
%\begin{align*}
%&(\nu_*-\mu_*)\lambda\pd_N\wht\vph(0) \\
%&=-(\nu_*-\mu_*)\lambda t_2(i\xi'\cdot\beta'-t_2\beta_N+\gamma_N)+(\nu_*-\mu_*)\lambda(i\xi'\cdot\gamma'-t_2\gamma_N).
%\end{align*}
%Combining this equality with \eqref{eq:212} and \eqref{eq:214_2} furnishes
%\begin{align*}
%(\nu_*-\mu_*)\pd_N\wht\vph(0)=-2\mu_*(t_2^2-\omega_\lambda^2)\beta_N-2(\nu_*-\mu_*) t_2\gamma_N,
%\end{align*}
%and 

From now on, we derive simultaneous equations with respect to $\beta_N$ and $\gamma_N$.
To this end, we note by \eqref{eq:230_3} and \eqref{eq:230_2} that the following relation holds: 
\begin{align}\label{eq:216_3}
&2\mu_*(t_2^2-\omega_\lambda^2)-(\nu_*-\mu_*)|\xi'|^2 \\
&=-2\mu_*\frac{(\nu_*-\mu_*)}{\mu_*(\mu_*+\nu_*)} \lambda -(\nu_*-\mu_*)|\xi'|^2 \notag \\
&=-(\nu_*-\mu_*)\left(\frac{2}{\mu_*+\nu_*}\lambda+|\xi'|^2\right) \notag \\
&=-(\nu_*-\mu_*) t_2^2. \notag
\end{align}
One multiplies \eqref{eq:203} by $-(\nu_*-\mu_*)t_2^2$ in order to obtain
\begin{align*}
-(\nu_*-\mu_*)t_2^2(i\xi'\cdot\alpha'-i\xi'\cdot\beta'-\omega_\lambda\alpha_N+\omega_\lambda \beta_N)=0,
\end{align*}
which, combined with \eqref{eq:216_2}, \eqref{eq:217_2}, \eqref{eq:216_3}, and $\alpha_N=0$ of \eqref{eq:217} furnishes
\begin{align*}
-(\nu_*-\mu_*)t_2^2i\xi'\cdot \wht \Bh'(0)+(\nu_*-\mu_*)(t_2|\xi'|^2\beta_N+|\xi'|^2\gamma_N) & \\
+\{2\mu_*(t_2^2-\omega_\lambda^2)-(\nu_*-\mu_*)|\xi'|^2\}\omega_\lambda\beta_N&=0.
\end{align*}
It thus holds that
\begin{align}\label{eq:218}
&(t_2-\omega_\lambda)\{2\mu_*\omega_\lambda(t_2+\omega_\lambda)+(\nu_*-\mu_*)|\xi'|^2\}\beta_N+(\nu_*-\mu_*)|\xi'|^2\gamma_N \\
&=(\nu_*-\mu_*)t_2^2 i\xi'\cdot\wht \Bh'(0). \notag
\end{align}
On the other hand, \eqref{eq:216_3} is written as
\begin{equation}\label{eq:219_0}
(\nu_*-\mu_*)(t_2^2-|\xi'|^2)=-2\mu_*(t_2-\omega_\lambda)(t_2+\omega_\lambda).
\end{equation}
One then multiplies \eqref{eq:219_2} by $\nu_*-\mu_*$,
and inserts the last relation into the resultant formula in order to obtain
\begin{equation*}%\label{eq:219}
-2\mu_*(t_2-\omega_\lambda)(t_2+\omega_\lambda)\beta_N-2(\nu_*-\mu_*)t_2\gamma_N =(\nu_*-\mu_*)\lambda \wht g(0).
\end{equation*}
Summing up this equation and \eqref{eq:218}, we have achieved
\begin{equation}\label{eq:220}
\BM\begin{pmatrix}\beta_N \\ \gamma_N\end{pmatrix}=
\begin{pmatrix}
(\nu_*-\mu_*)\lambda \wht g(0) \\ 
(\nu_*-\mu_*)t_2^2 i\xi'\cdot\wht \Bh'(0)
\end{pmatrix},
%=:
%\begin{pmatrix}G \\ H\end{pmatrix},
\end{equation}
where %we have set
\begin{equation*}
\BM=
\begin{pmatrix}
-2\mu_*(t_2-\omega_\lambda)(t_2+\omega_\lambda) & -2(\nu_*-\mu_*)t_2 \\
(t_2-\omega_\lambda)\{2\mu_*\omega_\lambda(t_2+\omega_\lambda)+(\nu_*-\mu_*)|\xi'|^2\} & (\nu_*-\mu_*)|\xi'|^2 
\end{pmatrix}.
\end{equation*}

Let us solve \eqref{eq:220}. By direct calculations,
$$
\det\BM
=(\nu_*-\mu_*)(t_2-\omega_\lambda)\Fq(\xi',\lambda), 
$$
where
$$
\Fq(\xi',\lambda)
=2\{(2\mu_*(t_2+\omega_\lambda)\omega_\lambda+(\nu_*-\mu_*)|\xi'|^2) t_2-\mu_*(t_2+\omega_\lambda)|\xi'|^2\}.
$$
%\begin{align*}
%&=2(\nu_*-\mu_*)(t_2-\omega_\lambda)[\{2\mu_*(t_2+\omega_\lambda)\omega_\lambda+(\nu_*-\mu_*)|\xi'|^2\} t_2 \\
%&-\mu_*(t_2+\omega_\lambda)|\xi'|^2], \\
%\end{align*}
One here has
\begin{lemm}\label{lemm:detL_IV}
%Assume that $\mu_*$, $\nu_*$, and $\kappa_*$ are positive constant satisfying the conditions of Case IV.
There holds $\det\BM\neq 0$ for any $(\xi',\lambda)\in\BR^{N-1}\times(\overline{\BC_+}\setminus\{0\})$.
\end{lemm}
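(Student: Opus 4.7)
The plan is to adapt the proof of Lemma \ref{lemm:detL} directly. First I would dispose of the outer factor: under Case IV one has $\mu_*\neq\nu_*$ by Lemma \ref{lemm:roots_s}\eqref{lemm:roots_s4}, and by \eqref{recall:1} (which still holds in Case IV) together with $s_2\neq\mu_*^{-1}$ one gets $t_2^2-\omega_\lambda^2=(s_2-\mu_*^{-1})\lambda\neq 0$ whenever $\lambda\neq 0$; since $\Re t_2>0$ and $\Re\omega_\lambda>0$ on $\BR^{N-1}\times(\overline{\BC_+}\setminus\{0\})$ (by the choice of branch for the square root, arguing as in Lemma \ref{lemm:symbol0}), the case $t_2=-\omega_\lambda$ is excluded. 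Hence the prefactor $(\nu_*-\mu_*)(t_2-\omega_\lambda)$ never vanishes and it suffices to prove $\Fq(\xi',\lambda)\neq 0$.

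Suppose for contradiction that $\Fq(\xi',\lambda)=0$, so that $\det\BM=0$, at some $(\xi',\lambda)\in\BR^{N-1}\times(\overline{\BC_+}\setminus\{0\})$. Then the homogeneous version of \eqref{eq:220}, obtained by setting $\wht g(0)=0$ and $\wht\Bh'(0)=0$, admits a pair $(\beta_N,\gamma_N)\neq(0,0)$. Substituting this pair back through the relations \eqref{eq:213}, \eqref{eq:214_2}, \eqref{eq:230}, \eqref{eq:216_5} into the ansatz \eqref{eq:200}-\eqref{eq:200_3}, together with $\alpha_j=0$ and $\alpha_N=0$ from the homogeneous version of \eqref{eq:217}, produces a quintuple $(\wht u_1,\dots,\wht u_N,\wht\vph)$ which (i) solves \eqref{1023_18:eq5}-\eqref{1103_18:eq5} subject to the divergence constraint \eqref{eq:div-vph}, (ii) satisfies the homogeneous boundary conditions $\wht u_J(0)=0$ and $\pd_N\wht\vph(0)=0$, (iii) decays exponentially as $x_N\to\infty$ since $\Re t_2,\Re\omega_\lambda>0$, and (iv) is nontrivial, because $\wht u_N(x_N)=\beta_N(e^{-t_2 x_N}-e^{-\omega_\lambda x_N})+\gamma_N x_N e^{-t_2 x_N}$ cannot vanish identically on $(0,\infty)$ for $(\beta_N,\gamma_N)\neq(0,0)$ and $t_2\neq\omega_\lambda$.

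At this point I would invoke verbatim the energy argument used in Steps 1-3 of the proof of Lemma \ref{lemm:detL}. The identities \eqref{170821_3}, \eqref{170821_4}, their sum \eqref{170821_5}, the integration by parts that converts $((\pd_N^2-|\xi'|^2)\vph,\vph)$ into $-(\|\pd_N\vph\|^2+|\xi'|^2\|\vph\|^2)$, and the final real/imaginary split \eqref{170821_7}-\eqref{170821_8} only use the PDE system \eqref{eq:101}-\eqref{eq:102}, the divergence constraint, and the homogeneous boundary data, none of which are sensitive to which of Cases I-V we are in. From \eqref{170821_7} one concludes $\vph\equiv 0$, and then either \eqref{170821_7} (if $\Re\lambda>0$) or \eqref{170821_8} (if $\Re\lambda=0$) forces $u_1=\dots=u_N=0$, contradicting the nontriviality established above.

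The main obstacle, I expect, is not the energy estimate itself, which is purely linear-algebraic once the reduction is in place, but rather the bookkeeping step of verifying that a nontrivial $(\beta_N,\gamma_N)$ solving the homogeneous \eqref{eq:220} really does propagate back to a nontrivial solution of \eqref{1023_18:eq5}-\eqref{1103_18:eq5}. This amounts to retracing the derivations of Subsection \ref{subsec5-1} with $\wht g(0)=0$ and $\wht\Bh'(0)=0$, checking at each step that the eliminations leading to \eqref{eq:213}, \eqref{eq:230}, and \eqref{eq:216_5} are legitimate (the only divisions are by $t_2$ and by $t_2^2-\omega_\lambda^2$, both nonzero on the relevant domain as noted above). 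Once this is confirmed, the energy method transfers without change and closes the proof.
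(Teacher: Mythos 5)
Your proposal is correct and takes essentially the same approach as the paper: the paper's own proof is a one-line remark that the argument mirrors that of Lemma \ref{lemm:detL}, and your reconstruction fills in exactly the intended details — factoring out $(\nu_*-\mu_*)(t_2-\omega_\lambda)$ (nonvanishing by $\mu_*\neq\nu_*$, $s_2\neq\mu_*^{-1}$, and $\Re t_2,\Re\omega_\lambda>0$), noting the divisions by $t_2$ and $t_2^2-\omega_\lambda^2$ in the backpropagation are legitimate, establishing nontriviality via the linear independence of $e^{-t_2 x_N}$, $e^{-\omega_\lambda x_N}$, $x_N e^{-t_2 x_N}$, and then running the case-independent energy identities \eqref{170821_3}--\eqref{170821_8} to reach a contradiction.
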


\begin{proof}
The proof is similar to the proof of Lemma \ref{lemm:detL}, so that the detailed proof may be omitted.
\end{proof}

Let us write the inverse matrix $\BM^{-1}$ of $\BM$ as follows:
\begin{equation*}
\BM^{-1}=\frac{1}{\det\BM}
\begin{pmatrix}
M_{11} & M_{12} \\
M_{21} & M_{22}
\end{pmatrix},
\end{equation*}
where
\begin{align}\label{eq:236}
M_{11}&=(\nu_*-\mu_*)|\xi'|^2, \quad
M_{12}=2(\nu_*-\mu_*)t_2, \\
M_{21} &=-(t_2-\omega_\lambda)\{2\mu_*(t_2+\omega_\lambda)\omega_\lambda+(\nu_*-\mu_*)|\xi'|^2\}, \notag \\
M_{22}&=-2\mu_*(t_2-\omega_\lambda)(t_2+\omega_\lambda). \notag
\end{align}
One then sees that, by solving \eqref{eq:220},
\begin{align}\label{eq:235}
\beta_N
&=\frac{(\nu_*-\mu_*) M_{11}}{\det\BM}\lambda \wht g(0)
+\frac{(\nu_*-\mu_*) M_{12}}{\det\BM}t_2^2i\xi'\cdot\wht\Bh'(0), \\
\gamma_N
&=\frac{(\nu_*-\mu_*) M_{21}}{\det\BM}\lambda \wht g(0)
+\frac{(\nu_*-\mu_*) M_{22}}{\det\BM}t_2^2 i\xi'\cdot\wht\Bh'(0). \notag
\end{align}
On the other hand, one has, by \eqref{eq:200}, \eqref{eq:200_3}, \eqref{eq:213}, \eqref{eq:214_2},  \eqref{eq:230}, \eqref{eq:216_5}, and \eqref{eq:217},
\begin{align*}
\wht u_j(x_N)
&=\wht h_j(0)e^{-\omega_\lambda x_N} \\
&-\left(\frac{i\xi_j}{t_2}\beta_N+\frac{i\xi_j}{t_2^2}\gamma_N\right)(e^{-t_2 x_N}-e^{-\omega_\lambda x_N})
-\frac{i\xi_j}{t_2}\gamma_N x_N e^{-t_2 x_N}, \\
\wht u_N(x_N)
&=\beta_N(e^{-t_2 x_N}-e^{-\omega_\lambda x_N})+\gamma_N x_N e^{-t_2 x_N}, \\
\wht\vph(x_N)
&=\left\{-\left(\frac{t_2^2-|\xi'|^2}{t_2}\right)\beta_N+\left(\frac{t_2^2+|\xi'|^2}{t_2^2}\right)\gamma_N\right\}e^{-t_2 x_N} \\
&-\left(\frac{\mu_*+\nu_*}{2\kappa_*}\right)\frac{\lambda}{t_2}\gamma_N x_N e^{-t_2 x_N},
\end{align*}
and sets $\wht\rho(x_N)=-\lambda^{-1}\wht \vph(x_N)$ in view of \eqref{eq:1}.
Recall the inverse partial Fourier transform given in \eqref{defi:IPFT} and
set $\rho=\CF_{\xi'}^{-1}[\wht\rho(x_N)](x')$ and $u_J=\CF_{\xi'}^{-1}[\wht u_J(x_N)](x')$.
Then $\rho$ and $\Bu=(u_1,\dots, u_N)^\SST$ solve the system \eqref{eq2:half-red}.

%%%%%%%%%%
\subsection{Analysis of symbols}\label{subsec5-2}
This subsection estimates several symbols arising from the representation formulas of solutions
obtained in Subsection \ref{subsec5-1}.
We often denote $\Fq(\xi',\lambda)$ by $\Fq$ for short in what follows.

One starts with

\begin{lemm}\label{lemm1:case4}
%The following assertions hold true:
\begin{enumerate}[$(1)$]
\item
$\Fq\in\BBM_{3,1}(\BC_+)$.
\item
$M_{11}, M_{22}\in\BBM_{2,1}(\BC_+)$, $M_{12}\in\BBM_{1,1}(\BC_+)$, and
$M_{21}\in\BBM_{3,1}(\BC_+)$.
\item
It holds that
\begin{equation*}
\frac{t_2^2-|\xi'|^2}{t_2-\omega_\lambda}\in\BBM_{1,1}(\BC_+), \quad
\frac{M_{21}}{t_2-\omega_\lambda}\in \BBM_{2,1}(\BC_+), \quad
\frac{M_{22}}{t_2-\omega_\lambda}\in\BBM_{1,1}(\BC_+).
\end{equation*}
\end{enumerate}
\end{lemm}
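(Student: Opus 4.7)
My plan is to reduce every claim to the algebra of $\BBM_{r,1}$-symbols codified in Lemma \ref{lemm:algebra}, using the basic membership $t_2, \omega_\lambda, t_2+\omega_\lambda \in \BBM_{1,1}(\BC_+)$ and $\xi_k \in \BBM_{1,1}(\BC_+)$ supplied by Lemma \ref{lemm:symbol0}, plus the algebraic simplifications already recorded in \eqref{eq:230_2}--\eqref{eq:230_3} for Case IV.

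For assertion (1), I would read off $\Fq$ term by term. Each of the three summands $(t_2+\omega_\lambda)\omega_\lambda t_2$, $|\xi'|^2 t_2$, and $(t_2+\omega_\lambda)|\xi'|^2$ is a product of three factors each belonging to $\BBM_{1,1}(\BC_+)$ (using $|\xi'|^2 = \sum_k \xi_k^2$ and the product rule). Hence each summand lies in $\BBM_{3,1}(\BC_+)$, and, since $\BBM_{r,1}(\BC_+)$ is a vector space, so does $\Fq$.

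For assertion (2), the same bookkeeping applies entry by entry: $M_{11}$ is a scalar multiple of $|\xi'|^2$, hence in $\BBM_{2,1}$; $M_{12}$ is a scalar multiple of $t_2$, hence in $\BBM_{1,1}$; $M_{22}=-2\mu_*(t_2-\omega_\lambda)(t_2+\omega_\lambda)$ is a product of two $\BBM_{1,1}$-symbols, hence in $\BBM_{2,1}$; and $M_{21}=-(t_2-\omega_\lambda)\{2\mu_*(t_2+\omega_\lambda)\omega_\lambda+(\nu_*-\mu_*)|\xi'|^2\}$ is a product of a $\BBM_{1,1}$-symbol with a $\BBM_{2,1}$-symbol, hence in $\BBM_{3,1}$.

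The genuine content of the lemma is assertion (3), because dividing by $t_2-\omega_\lambda$ does not a priori preserve membership in $\BBM_{r,1}$. The key observation is that the identity \eqref{eq:230_2} together with $\mu_*\neq\nu_*$ (Case IV) gives
\begin{equation*}
\frac{1}{t_2-\omega_\lambda} \;=\; \frac{t_2+\omega_\lambda}{t_2^2-\omega_\lambda^2} \;=\; -\,\frac{\mu_*(\mu_*+\nu_*)(t_2+\omega_\lambda)}{(\nu_*-\mu_*)\lambda},
\end{equation*}
so the putative singularity at $t_2=\omega_\lambda$ is compensated by an explicit factor of $\lambda$ in the denominator. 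I would then distribute this identity into each of the three quotients: for $M_{21}$ and $M_{22}$ the explicit factor $(t_2-\omega_\lambda)$ cancels directly, yielding $-\{2\mu_*(t_2+\omega_\lambda)\omega_\lambda+(\nu_*-\mu_*)|\xi'|^2\} \in \BBM_{2,1}$ and $-2\mu_*(t_2+\omega_\lambda) \in \BBM_{1,1}$ respectively; while for the first quotient I would combine $t_2^2-|\xi'|^2 = \frac{\mu_*+\nu_*}{2\kappa_*}\lambda = \frac{2}{\mu_*+\nu_*}\lambda$ (using $\eta_*=0$, i.e.\ \eqref{eq:230_3}) with the above inversion so that the factor $\lambda$ cancels, giving $-\frac{2\mu_*}{\nu_*-\mu_*}(t_2+\omega_\lambda)\in\BBM_{1,1}$. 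The step I expect to need most care is verifying that the cancellation of $\lambda$ in the first quotient uses \emph{both} $\eta_*=0$ (which forces $t_2^2-|\xi'|^2$ to be a pure multiple of $\lambda$) and $\mu_*\neq\nu_*$ (which keeps the denominator $\nu_*-\mu_*$ nonzero); without the exact matching of these two case hypotheses the argument would fail.
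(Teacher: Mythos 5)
Your proof is correct and follows essentially the same approach as the paper: parts (1) and (2) by routine application of Lemma \ref{lemm:algebra} and Lemma \ref{lemm:symbol0}, and part (3) by exploiting the Case IV identities \eqref{eq:230_2}--\eqref{eq:230_3} to make the singular factor $(t_2-\omega_\lambda)^{-1}$ cancel. The paper explicitly verifies only the first quotient in (3) and leaves the other two to the reader; your observation that $M_{21}$ and $M_{22}$ carry an explicit factor $(t_2-\omega_\lambda)$ that cancels directly is exactly the intended argument.
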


\begin{proof}
(1), (2).
The required properties follow from \eqref{eq:236} and Lemmas \ref{lemm:algebra} and \ref{lemm:symbol0} immediately.

(3). We here prove the first assertion only.
Since it holds by \eqref{eq:230_3} that
$$
t_2^2-|\xi'|^2=\left(\frac{\mu_*+\nu_*}{2\kappa_*}\right)\lambda=\frac{2}{\mu_*+\nu_*}\lambda,
$$
one has by \eqref{eq:230_2}
\begin{align*}
\frac{t_2^2-|\xi'|^2}{t_2-\omega_\lambda}
&=\frac{t_2^2-|\xi'|^2}{t_2^2-\omega_\lambda^2}(t_2+\omega_\lambda) \\
&=\left\{-\frac{(\nu_*-\mu_*)}{\mu_*(\mu_*+\nu_*)}\lambda\right\}^{-1}\frac{2\lambda}{\mu_*+\nu_*}(t_2+\omega_\lambda) \\
&=-\frac{2\mu_*}{\nu_*-\mu_*}(t_2+\omega_\lambda).
\end{align*}
Combining this relation with Lemmas \ref{lemm:algebra} and \ref{lemm:symbol0} furnishes
$$
\frac{t_2^2-|\xi'|^2}{t_2-\omega_\lambda}\in\BBM_{1,1}(\BC_+).
$$
This completes the proof of the lemma.
\end{proof}

Similarly to the proof of Lemma \ref{lemm:lopatinski} and Corollary \ref{coro:lopatinski},
one can prove the following lemma by using Lemmas \ref{lemm:detL_IV} and \ref{lemm1:case4} (1).

\begin{lemm}\label{lemm2:case4}
 $\Fq^{-1}\in\BBM_{-3,1}(\BC_+)$.
\end{lemm}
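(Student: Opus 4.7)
The plan is to mirror, almost verbatim, the argument that produced Corollary \ref{coro:lopatinski} from Lemmas \ref{lemm:symbol1} and \ref{lemm:lopatinski}. Concretely, I would first establish a lower bound
\begin{equation*}
|\Fq(\xi',\lambda)|\geq C_{\mu_*,\nu_*,\kappa_*}(|\lambda|^{1/2}+|\xi'|)^3
\end{equation*}
on $(\BR^{N-1}\setminus\{0\})\times(\overline{\BC_+}\setminus\{0\})$, and then combine it with the upper bound $\Fq\in\BBM_{3,1}(\BC_+)$ from Lemma \ref{lemm1:case4}~(1) via Bell's formula and Leibniz's rule exactly as in Corollary \ref{coro:lopatinski}. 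The nonvanishing of $\Fq$ needed below is automatic: Lemma \ref{lemm:detL_IV} gives $\det\BM=(\nu_*-\mu_*)(t_2-\omega_\lambda)\Fq\neq 0$ on $\BR^{N-1}\times(\overline{\BC_+}\setminus\{0\})$, and since $\nu_*\neq \mu_*$ in Case IV and $t_2\neq \omega_\lambda$ by Lemma \ref{lemm:roots_t}~\eqref{coro:roots_s4}, it follows that $\Fq\neq 0$ on this set.

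For the lower bound I would follow the three-case split used for $\Fm_k$. In \textbf{Case 1} $(|\xi'|^2/|\lambda|\leq R_1)$, set $z=|\xi'|^2/\lambda$ and expand: $t_2=\sqrt{(\mu_*+\nu_*)\lambda/(2\kappa_*)}(1+O(z))$ and $\omega_\lambda=\sqrt{\mu_*^{-1}\lambda}(1+O(z))$, so the $|\xi'|^2$ terms in $\Fq$ are $O(|\xi'|^2|\lambda|^{1/2})$ while $2\mu_*(t_2+\omega_\lambda)\omega_\lambda t_2$ is $\asymp|\lambda|^{3/2}$ with a nonzero leading coefficient (the fact that $\Re\sqrt{s_2},\Re\sqrt{\mu_*^{-1}}\geq M>0$ gives a quantitative lower bound on the real part). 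In \textbf{Case 2} $(|\lambda|/|\xi'|^2\leq R_2)$, set $y=\lambda/|\xi'|^2$ and expand $t_2=|\xi'|(1+O(y))$, $\omega_\lambda=|\xi'|(1+O(y))$, so that
\begin{equation*}
\Fq(\xi',\lambda)=2(\mu_*+\nu_*)|\xi'|^3(1+O(y)) \quad \text{as $|y|\to 0$,}
\end{equation*}
which yields $|\Fq|\geq c|\xi'|^3$ for $R_2$ small enough. \textbf{Case 3} $(R_1/2\leq|\xi'|^2/|\lambda|$ and $R_2/2\leq|\lambda|/|\xi'|^2)$ is handled by homogenizing: set $\wtd\xi'=(|\lambda|^{1/2}+|\xi'|)^{-1}\xi'$, $\wtd\lambda=(|\lambda|^{1/2}+|\xi'|)^{-2}\lambda$, note that $\Fq(\xi',\lambda)=(|\lambda|^{1/2}+|\xi'|)^3\Fq(\wtd\xi',\wtd\lambda)$, observe that $(\wtd\xi',\wtd\lambda)$ ranges over a compact subset $K$ of $\BR^{N-1}\times\overline{\BC_+}$ bounded away from $\{(\wtd\xi',\wtd\lambda):\wtd\lambda=0\text{ and }\wtd\xi'=0\}$, and use continuity of $\Fq$ together with $\Fq\neq 0$ on $K$ to get a uniform positive minimum.

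Having secured the lower bound, the derivative estimates are routine. Applying Bell's formula to $f(t)=t^r$ with $g(\xi')=\Fq(\xi',\lambda)$ and invoking Lemma \ref{lemm1:case4}~(1) and the lower bound, one obtains
\begin{equation*}
|\pd_{\xi'}^{\alpha'}\Fq(\xi',\lambda)^r|\leq C_{r,\alpha',\mu_*,\nu_*,\kappa_*}(|\lambda|^{1/2}+|\xi'|)^{3r-|\alpha'|}
\end{equation*}
for every multi-index $\alpha'\in\BN_0^{N-1}$ and every real $r$. In particular this handles the $n=0$ estimate with $r=-1$. For the $n=1$ estimate, apply the identity
\begin{equation*}
\lambda\frac{d}{d\lambda}\Fq(\xi',\lambda)^{-1}=-\Fq(\xi',\lambda)^{-2}\Big(\lambda\frac{d}{d\lambda}\Fq(\xi',\lambda)\Big),
\end{equation*}
and combine the $r=-2$ case of the previous bound with Lemma \ref{lemm1:case4}~(1) applied to $\lambda(d/d\lambda)\Fq\in\BBM_{3,1}(\BC_+)$ through Leibniz's rule. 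The main obstacle, if any, is the bookkeeping in Case 1 of the lower bound: the leading-order coefficient involves several square roots of complex quantities, and one has to verify that the real part of the leading coefficient is bounded below by a positive constant depending only on $\mu_*,\nu_*,\kappa_*$. Everything else follows the template already established in Section \ref{sec3}.
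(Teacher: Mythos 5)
Your proposal is correct and follows essentially the same route the paper indicates: establish the Lopatinskii-type lower bound $|\Fq|\geq C(|\lambda|^{1/2}+|\xi'|)^3$ via the same three-case split used for $\Fm_k$ in Lemma \ref{lemm:lopatinski} (deducing $\Fq\neq 0$ from Lemma \ref{lemm:detL_IV} together with $\nu_*\neq\mu_*$ and $t_2\neq\omega_\lambda$ in Case IV), and then transfer this to $\Fq^{-1}\in\BBM_{-3,1}(\BC_+)$ via Bell's formula and the Leibniz rule exactly as in Corollary \ref{coro:lopatinski}, using the upper bound $\Fq\in\BBM_{3,1}(\BC_+)$ from Lemma \ref{lemm1:case4}~(1). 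The asymptotic computations in your Cases 1 and 2 check out (in particular the leading coefficient $2(\mu_*+\nu_*)$ in Case 2 and the cubic homogeneity used in Case 3), and since $s_2>0$ in Case IV the leading coefficient in Case 1 is a genuine positive real, so the ``bookkeeping obstacle'' you flag is in fact absent.
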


%%%%%%%%%%
\subsection{Proof of Theorem \ref{theo:half-red}}\label{subsec5-3}
This subsection proves Theorem \ref{theo:half-red} by means of results obtained in Subsections \ref{subsec5-1} and \ref{subsec5-2}.
Let $\CM(x_N)$ be given in \eqref{stkernel:2}.

First, we consider the solution formula of $\rho$.
It holds by \eqref{eq:235} that
\begin{align*}
&-\left(\frac{t_2^2-|\xi'|^2}{t_2}\right)\beta_N+\left(\frac{t_2^2+|\xi'|^2}{t_2^2}\right)\gamma_N \\
&=
\left\{-\left(\frac{t_2^2-|\xi'|^2}{t_2-\omega_\lambda}\right)t_2 M_{11}+(t_2^2+|\xi'|^2)
\left(\frac{M_{21}}{t_2-\omega_\lambda}\right)\right\}\frac{\lambda \wht g(0)}{t_2^2\Fq(\xi',\lambda)} \\
&+\left\{-t_2(t_2^2-|\xi'|^2)M_{12}+(t_2^2+|\xi'|^2)M_{22}\right\}\frac{i\xi'\cdot\wht\Bh'(0)}{(t_2-\omega_\lambda)\Fq(\xi',\lambda)}.
\end{align*}
On the other hand, since 
\begin{align*}
&-t_2(t_2^2-|\xi'|^2)M_{12}+(t_2^2+|\xi'|^2)M_{22} \\
&=-2t_2^2(\nu_*-\mu_*)(t_2^2-|\xi'|^2)-2\mu_*(t_2^2+|\xi'|^2)(t_2-\omega_\lambda)(t_2+\omega_\lambda),
\end{align*}
one has by \eqref{eq:230_3} and \eqref{eq:219_0}
\begin{align*}
&-t_2(t_2^2-|\xi'|^2)M_{12}+(t_2^2+|\xi'|^2)M_{22} \\
&=2\mu_*(t_2-\omega_\lambda)(t_2+\omega_\lambda)(t_2^2-|\xi'|^2) \\
&=\left(\frac{4\mu_*}{\mu_*+\nu_*}\right)\lambda(t_2-\omega_\lambda)(t_2+\omega_\lambda).
\end{align*}
Thus we have
\begin{align}\label{sol:rho_case4}
\rho
&=
-\CF_{\xi'}^{-1}
\left[
\left(-\frac{t_2^2-|\xi'|^2}{t_2-\omega_\lambda}\frac{M_{11}}{t_2\Fq(\xi',\lambda)}
+\frac{t_2^2+|\xi'|^2}{t_2^2\Fq(\xi',\lambda)}
\frac{M_{21}}{t_2-\omega_\lambda}\right) 
e^{-t_2 x_N}\wht g(0)
\right](x')  \\
&-\left(\frac{4\mu_*}{\mu_*+\nu_*}\right)\sum_{k=1}^{N-1}\CF_{\xi'}^{-1}\left[
\frac{ i\xi_k(t_2+\omega_\lambda)}{\Fq(\xi',\lambda)}e^{-t_2 x_N}\wht h_k(0)\right](x') \notag \\
&+\left(\frac{\mu_*+\nu_*}{2\kappa_*}\right)
\CF_{\xi'}^{-1}\left[\frac{\lambda}{t_2\Fq(\xi',\lambda)}\frac{M_{21}}{t_2-\omega_\lambda}x_N e^{-t_2 x_N}\wht g(0)\right](x') \notag \\
&+\left(\frac{\mu_*+\nu_*}{2\kappa_*}\right)\sum_{k=1}^{N-1}
\CF_{\xi'}^{-1}\left[\frac{i\xi_k t_2}{\Fq(\xi',\lambda)}\frac{M_{22}}{t_2-\omega_\lambda}x_N e^{-t_2 x_N}\wht h_k(0)\right](x'). \notag
\end{align}

Next, we consider the formulas of $u_J$.
By \eqref{eq:235},
\begin{align}\label{sol:u_case4}
u_j&=\CF_{\xi'}^{-1}\left[e^{-\omega_\lambda x_N}\wht h_j(0)\right](x') \\
&-\CF_{\xi'}^{-1}
\left[\left(\frac{\lambda i\xi_j M_{11}}{t_2 \Fq(\xi',\lambda)}
+\frac{\lambda i\xi_j M_{21}}{t_2^2\Fq(\xi',\lambda)}\right)\CM(x_N)\wht g(0)\right](x') \notag \\
&+\sum_{k=1}^{N-1}\CF_{\xi'}^{-1}\left[
\left(\frac{\xi_j\xi_k t_2 M_{12}}{\Fq(\xi',\lambda)}+\frac{\xi_j\xi_k M_{22}}{\Fq(\xi',\lambda)}\right)
\CM(x_N)\wht h_k(0)\right](x') \notag \\
&-\CF_{\xi'}^{-1}\left[\frac{\lambda i\xi_j}{t_2\Fq(\xi',\lambda)}\frac{M_{21}}{t_2-\omega_\lambda}x_N e^{-t_2 x_N}\wht g(0)\right](x') \notag \\
&+\sum_{k=1}^{N-1}\CF_{\xi'}^{-1}
\left[\frac{\xi_j\xi_k t_2}{\Fq(\xi',\lambda)}\frac{M_{22}}{t_2-\omega_\lambda}x_N e^{-t_2 x_N}\wht h_k(0)\right](x'), \notag \\
u_N&=
\CF_{\xi'}^{-1}\left[\frac{\lambda M_{11}}{\Fq(\xi',\lambda)}\CM(x_N)\wht g(0)\right](x') \notag \\
&+\sum_{k=1}^{N-1}\CF_{\xi'}^{-1}
\left[\frac{i\xi_k t_2^2 M_{12}}{\Fq(\xi',\lambda)}\CM(x_N)\wht h_k(0)\right](x') \notag \\
&+\CF_{\xi'}^{-1}
\left[\frac{\lambda}{\Fq(\xi',\lambda)}\frac{M_{21}}{t_2-\omega_\lambda}x_N e^{-t_2 x_N}\wht g(0)\right](x') \notag \\
&+\sum_{k=1}^{N-1}
\CF_{\xi'}^{-1}\left[\frac{i\xi_k t_2^2}{\Fq(\xi',\lambda)}\frac{M_{22}}{t_2-\omega_\lambda}x_N e^{-t_2 x_N}\wht h_k(0)\right](x'). \notag
\end{align}

By Lemmas \ref{lemm:algebra}, \ref{lemm:symbol0}, \ref{lemm1:case4}, and \ref{lemm2:case4},
the symbols of $\rho$ and $u_J$ satisfy the following conditions:
For $\rho$, there hold
\begin{align}\label{symbols:rho_case4}
\frac{t_2^2-|\xi'|^2}{t_2-\omega_\lambda}\frac{M_{11}}{t_2\Fq(\xi',\lambda)},
\ \frac{t_2^2+|\xi'|^2}{t_2^2\Fq(\xi',\lambda)}\frac{M_{21}}{t_2-\omega_\lambda},
\ \frac{ i\xi_k(t_2+\omega_\lambda)}{\Fq(\xi',\lambda)}
&\in\BBM_{-1,1}(\BC_+), \\
\frac{\lambda}{t_2\Fq(\xi',\lambda)}\frac{M_{21}}{t_2-\omega_\lambda},
\ \frac{i\xi_k t_2}{\Fq(\xi',\lambda)}\frac{M_{22}}{t_2-\omega_\lambda}
&\in \BBM_{0,1}(\BC_+); \notag
\end{align}
For $u_J$, there hold
\begin{align}\label{symbols:u_case4}
1&\in\BBM_{0,1}(\BC_+), \\
\frac{\lambda i\xi_j M_{11}}{t_2 \Fq(\xi',\lambda)},
\ \frac{\lambda i\xi_j M_{21}}{t_2^2\Fq(\xi',\lambda)},
\ \frac{\xi_j\xi_k t_2 M_{12}}{\Fq(\xi',\lambda)},
\ \frac{\xi_j\xi_k M_{22}}{\Fq(\xi',\lambda)}
&\in\BBM_{1,1}(\BC_+), \notag \\
\frac{\lambda i\xi_j}{t_2\Fq(\xi',\lambda)}\frac{M_{21}}{t_2-\omega_\lambda}, 
\ \frac{\xi_j\xi_k t_2}{\Fq(\xi',\lambda)}\frac{M_{22}}{t_2-\omega_\lambda}
&\in \BBM_{1,1}(\BC_+), \notag \\
\frac{\lambda M_{11}}{\Fq(\xi',\lambda)},
\ \frac{i\xi_k t_2^2 M_{12}}{\Fq(\xi',\lambda)},
\ \frac{\lambda}{\Fq(\xi',\lambda)}\frac{M_{21}}{t_2-\omega_\lambda},
\ \frac{i\xi_k t_2^2}{\Fq(\xi',\lambda)}\frac{M_{22}}{t_2-\omega_\lambda}
&\in \BBM_{1,1}(\BC_+). \notag
\end{align}

At this point, we introduce the following lemma 
in order to show the existence of $\CR$-bounded solutions operator families associated with
$\rho$ and $u_J$ (cf. the appendix below for the proof).

\begin{lemm}\label{lemm:multi-inhom}
Let $q\in(1,\infty)$ and $a\in(0,\infty)$.
Assume
$$
w_l(\xi',\lambda)\in\BBM_{l-1,1}(\BC_+) \quad (l=1,2),
$$
and set for $x=(x',x_N)\in\BR_+^N$
%one defines operators $W_l(\lambda)$, $l=1,2$, as
\begin{align*}
[W_l(\lambda)f](x)
=\CF_{\xi'}^{-1}\left[w_l(\xi',\lambda)x_Ne^{-\sqrt{|\xi'|^2+a\lambda}x_N}\wht f(\xi',0)\right](x') \quad (l=1,2),
\end{align*}
with $\lambda\in\BC_+$ and $f\in H_q^2(\BR_+^N)$.
Then the following assertions hold true:
\begin{enumerate}[$(1)$]
\item\label{lemm:multi-inhom_1}
For $\lambda\in\BC_+$, there is an operator $\wtd W_1(\lambda)$, with
$$
\wtd W_1(\lambda)\in \hlm(\BC_+,\CL(L_q(\BR_+^N)^{N^2+N+1},H_q^3(\BR_+^N))),
$$
such that for any $f\in H_q^2(\BR_+^N)$
$$
W_1(\lambda)f=\wtd W_1(\lambda)(\nabla^2 f,\lambda^{1/2}\nabla f,\lambda f).
$$
In addition, for $n=0,1$,
$$
\CR_{\CL(L_q(\BR_+^N)^{N^2+N+1},\FA_q^0(\BR_+^N))}
\left(\left\{\left.\left(\lambda\frac{d}{d\lambda}\right)^n\left(\CS_\lambda^0\wtd W_1(\lambda)\right)
\right|\lambda\in\BC_+\right\}\right)
\leq C,
$$
with some positive constant $C$ depending solely on $N$, $q$, and $a$.
Here, $\FA_q^0(\BR_+^N)$ and $\CS_\lambda^0$ are given in \eqref{defi:AB} for $G=\BR_+^N$.
%%%%%
\item\label{lemm:multi-inhom_2}
For $\lambda\in\BC_+$, there is an operator $\wtd W_2(\lambda)$, with
$$
\wtd W_2(\lambda)\in \hlm(\BC_+,\CL(L_q(\BR_+^N)^{N^2+N+1},H_q^2(\BR_+^N))),
$$
such that for any $f\in H_q^2(\BR_+^N)$
$$
W_2(\lambda)f=\wtd W_2(\lambda)(\nabla^2 f,\lambda^{1/2}\nabla f,\lambda f).
$$
In addition, for $n=0,1$,
$$
\CR_{\CL(L_q(\BR_+^N)^{N^2+N+1})}
\left(\left\{\left.\left(\lambda\frac{d}{d\lambda}\right)^n\left(\CT_\lambda \wtd W_2(\lambda)\right)
\right|\lambda\in\BC_+\right\}\right)
\leq C,
$$
with some positive constant $C$ depending solely on $N$, $q$, and $a$.
Here, $\CT_\lambda$ is given in \eqref{defi:AB}.
\end{enumerate}
\end{lemm}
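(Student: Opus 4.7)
The plan is to mirror the proof of Lemma \ref{lemm:multiplier1} in \cite{Saito1}, with one additional ingredient to absorb the factor $x_N$ present in the kernel. Set $\omega=\sqrt{|\xi'|^2+a\lambda}$. By Lemma \ref{lemm:symbol0}, $\omega\in\BBM_{1,1}(\BC_+)$, $\omega^{-1}\in\BBM_{-1,1}(\BC_+)$, and there exists $c_a>0$ such that $\Re\omega\geq c_a(|\lambda|^{1/2}+|\xi'|)$ on $\BC_+$. The heuristic is that the $x_N$ factor buys exactly one power of $\omega^{-1}$, so that the effective symbolic order of $w_l(\xi',\lambda)x_N e^{-\omega x_N}$ is $(l-1)-1=l-2$, matching the orders $-1$ and $0$ that are treated in Lemma \ref{lemm:multiplier1}(1) and (2) respectively.

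First I would compute the derivatives entering the $H_q^3$- and $H_q^2$-norms. A direct induction gives, for $j\geq 1$,
\begin{equation*}
\pd_N^j\bigl(x_N e^{-\omega x_N}\bigr)=j(-\omega)^{j-1}e^{-\omega x_N}+(-\omega)^j x_N e^{-\omega x_N},
\end{equation*}
while tangential derivatives $\pd_{x'}^{\alpha'}$ only bring down $(i\xi')^{\alpha'}$ factors on the Fourier side. Hence every mixed derivative $\pd_N^j\pd_{x'}^{\alpha'}$ applied to $W_l(\lambda)f$ decomposes into two types of pieces: (a) one with Fourier symbol $w_l(i\xi')^{\alpha'}(-\omega)^{j-1}\in\BBM_{l-1+|\alpha'|+j-1,1}(\BC_+)$ multiplying the pure exponential kernel $e^{-\omega x_N}$, which is covered verbatim by Lemma \ref{lemm:multiplier1}; and (b) one with symbol $w_l(i\xi')^{\alpha'}(-\omega)^j$ multiplying the kernel $x_N e^{-\omega x_N}$, which is the genuinely new contribution.

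For type (b) the crucial auxiliary bound is
\begin{equation*}
\bigl|x_N e^{-\omega x_N}\bigr|\leq\frac{2}{e\,\Re\omega}\,e^{-(\Re\omega)x_N/2},\qquad x_N>0,
\end{equation*}
which follows from $\sup_{s>0}se^{-s/2}=2/e$. Combined with $\Re\omega\gtrsim|\lambda|^{1/2}+|\xi'|$, it shows that $w_l(i\xi')^{\alpha'}(-\omega)^j x_N e^{-\omega x_N}$ obeys, in $\xi'$, the same Mikhlin-type derivative estimates as a kernel $\tilde w(\xi',\lambda)e^{-c\omega x_N}$ with $\tilde w\in\BBM_{l-1+|\alpha'|+j-1,1}(\BC_+)$ and a fixed $c\in(0,1)$; the extra $\xi'$-derivatives falling on $x_N$ via the chain rule through $\omega$ are handled by Leibniz together with Lemma \ref{lemm:algebra}. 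Evaluating the $L_q(\BR_+^N)$-norm by Fubini in $(x_N,x')$ and then applying the operator-valued Fourier multiplier theorem of Weis, exactly as in the proofs of Lemmas \ref{lemm:multiplier1}, \ref{lemm:multiplier2}, \ref{lemm:multiplier3} in \cite{Saito1}, yields the required $\CR$-bounds for $\CS_\lambda^0\wtd W_1(\lambda)$ and $\CT_\lambda\wtd W_2(\lambda)$, uniform in $\lambda\in\BC_+$. The reduction from the boundary trace $\wht f(\xi',0)$ to the interior data $(\nabla^2 f,\lambda^{1/2}\nabla f,\lambda f)$ is then carried out exactly as there.

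The main obstacle I expect is the differentiation in $\lambda$ required by the factor $(\lambda d/d\lambda)^n$: each such $\lambda$-derivative of $x_N e^{-\omega x_N}$ produces an extra power $x_N^k e^{-\omega x_N}$ that must be absorbed. This is handled by the iterated pointwise bound $|x_N^k e^{-\omega x_N}|\leq C_k(\Re\omega)^{-k}e^{-(\Re\omega)x_N/2}$ (from $\sup_{s>0}s^k e^{-s/2}=(2k/e)^k$), which supplies exactly the missing powers of $\omega^{-1}$ so that every resulting symbol lies in the expected class. Once these estimates are organized, the closure properties of $\CR$-bounded families collected in Lemma \ref{prop:R}, together with Kahane's inequality (Remark \ref{rema:Kahane}), assemble the final bound with constants depending only on $N$, $q$, and $a$.
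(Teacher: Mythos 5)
Your pointwise absorption idea is the right key estimate, and it does appear inside the paper's argument: the bound
$(|\lambda|^{1/2}+|\xi'|)^{1-|\alpha'|}x_N e^{-b(|\lambda|^{1/2}+|\xi'|)x_N}\le C(|\lambda|^{1/2}+|\xi'|)^{-|\alpha'|}e^{-(b/2)(|\lambda|^{1/2}+|\xi'|)x_N}$
is exactly how the extra $x_N$ is consumed in the appendix. But there is a genuine gap in the way you intend to put this to use.

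The lemma is not merely an $L_q$-estimate on the output of $W_l(\lambda)$; it asserts the existence of an operator $\wtd W_l(\lambda)\in\CL(L_q(\BR_+^N)^{N^2+N+1},\cdot)$ through which $W_l(\lambda)$ factors. Producing this operator requires the boundary-to-interior reduction
$$g(x_N)h(0)=-\int_0^\infty g'(x_N+y_N)h(y_N)\,dy_N-\int_0^\infty g(x_N+y_N)h'(y_N)\,dy_N$$
with $g(s)=s\,e^{-Bs}$ and $h=\wht f(\xi',\cdot)$, and this step does \emph{not} go through ``exactly as there'': unlike the pure-exponential case, $g'(s)=e^{-Bs}-Bse^{-Bs}$ brings in the symmetric kernels $(x_N+y_N)e^{-B(x_N+y_N)}$ and $B(x_N+y_N)e^{-B(x_N+y_N)}$, which are not covered by Lemma \ref{lemm:multiplier1} nor by the corresponding lemma of \cite{Saito1}. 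The paper isolates exactly this new kernel type in Lemma \ref{lemm:A3} and proves its $L_q$- and $\CR$-boundedness via the singular-integral criterion of Lemma \ref{lemm:A2} (a kernel bound $|\CK_\lambda(x)|\le M/|x|^N$), not via the Weis multiplier theorem as you invoke. Your pointwise bounds $|x_N^k e^{-\omega x_N}|\le C_k(\Re\omega)^{-k}e^{-(\Re\omega)x_N/2}$ are precisely the ingredients needed to verify that kernel bound, but you neither formulate the new convolution operators nor state or prove the $L_q$-boundedness lemma they require. Without that, differentiating the output and splitting into ``type (a)'' and ``type (b)'' pieces does not close the argument, because the type (b) pieces are still boundary operators and have not been expressed through interior data.

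A secondary organizational point: you differentiate $W_l(\lambda)f$ first and plan the reduction to interior data afterwards, whereas the paper does the reduction first and then uses the identities $1=\lambda/B^2-\sum(i\xi_j)^2/B^2$ and $B=\lambda/B-\sum(i\xi_j)^2/B$ to distribute across $(\nabla^2 f,\lambda^{1/2}\nabla f,\lambda f)$. The two orders are compatible in principle, but if you differentiate first you must still carry out the reduction on each type (b) piece, at which point the $(x_N+y_N)e^{-B(x_N+y_N)}$ kernel reappears and the gap described above resurfaces.
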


Finally, combining \eqref{sol:rho_case4}-\eqref{symbols:u_case4}
with Lemmas \ref{lemm:multiplier1}, \ref{lemm:multiplier3}, \ref{prop:R}, and \ref{lemm:multi-inhom}
shows the existence of solution operators $\CA^2(\lambda)$ and $\CB^2(\lambda)$
stated in Theorem \ref{theo:half-red}.
This completes the proof of Theorem \ref{theo:half-red} for Case IV.

%%%%%%%%%%%%%%%%%%%%%%%%%%%%%%%%%%%%%%%%%%%%%%%%%%%%%%%%%%%%%%%%%%%%%%
\section{Proof of Theorem \ref{theo:half-red} for Case V}\label{sec6}
This section proves Theorem \ref{theo:half-red} for Case V.
Throughout this section, we assume that $\mu_*$, $\nu_*$, and $\kappa_*$ are positive constants satisfying 
the condition of Case V.
One then recalls Lemmas \ref{lemm:roots_s} \eqref{lemm:roots_s5} and \ref{lemm:roots_t} \eqref{coro:roots_s5}, i.e.
\begin{equation*}
\mu_*=\nu_*, \quad  t_1=t_2=\omega_\lambda=\sqrt{|\xi'|^2+\mu_*^{-1}\lambda},
\end{equation*}
which are often used in the following computations.
Let $J=1,\dots,N$ and $j=1,\dots,N-1$ in this section.

%%%%%%%%%%
\subsection{Solution formulas}\label{subsec6-1}
One first considers
\eqref{170821_1}-\eqref{160827_1} with \eqref{eq:4}, \eqref{eq:44}, and \eqref{eq:div-vph}
in order to derive solution formulas of \eqref{eq2:half-red}.
In view of \eqref{170821_1}, \eqref{160827_1}, and Lemma \ref{lemm:roots_P}, we look for solutions $\wht u_J$ and $\wht\vph$ of the forms:
\begin{align}
\wht u_J
&=\alpha_J e^{-\omega_\lambda x_N}+\beta_J x_Ne^{-\omega_\lambda x_N}+\gamma_J x_N^2e^{-\omega_\lambda x_N}, \label{eq:300} \\
\wht\vph
&=\sigma e^{-\omega_\lambda x_N}+\tau x_Ne^{-\omega_\lambda x_N}. \label{eq:300_0}
\end{align}
%Then
%\begin{align}\label{eq:301}
%\pd_N\wht u_J&=(-\omega_\lambda \alpha_J+\beta_J)e^{-\omega_\lambda x_N} 
%+(-\omega_\lambda \beta_J+2\gamma_J)x_Ne^{-\omega_\lambda x_N} \\
%&-\omega_\lambda \gamma_J x_N^2 e^{-\omega_\lambda x_N}, \notag
%\end{align}
%which implies
It then holds  by \eqref{eq:div-vph} that
\begin{align}
\sigma
&=i\xi'\cdot\alpha'-\omega_\lambda \alpha_N+\beta_N, \label{eq:303_2} \\
\tau
&=i\xi'\cdot\beta'-\omega_\lambda \beta_N+2\gamma_N, \label{eq:303_1} \\
0&=i\xi'\cdot\gamma'-\omega_\lambda\gamma_N. \label{eq:303}
\end{align}
On the other hand, by the assumption $\kappa_*=\mu_*\nu_*$,
\begin{equation*}
\nu_*\lambda-\kappa_*(\pd_N^2-|\xi'|^2)=-\mu_*\nu_*(\pd_N^2-\omega_\lambda^2),
\end{equation*}
which, combined with $\mu_*=\nu_*$, yields
$$
\nu_*\lambda-\kappa_*(\pd_N^2-|\xi'|^2)=-\mu_*^2(\pd_N^2-\omega_\lambda^2).
$$
Therefore, \eqref{1023_18:eq5} and \eqref{1103_18:eq5} are respectively equivalent to 
\begin{align}\label{eq:320}
\lambda(\pd_N^2-\omega_\lambda^2)\wht u_j-\mu_* i \xi_j(\pd_N^2-\omega_\lambda^2)\wht\vph&=0, \\
\lambda(\pd_N^2-\omega_\lambda^2)\wht u_N-\mu_* \pd_N(\pd_N^2-\omega_\lambda^2)\wht\vph&=0. \notag
\end{align}
Here note that
\begin{align*}
(\pd_N^2-\omega_\lambda^2)(x_N e^{-\omega_\lambda x_N})&=-2\omega_\lambda e^{-\omega_\lambda x_N}, \\
(\pd_N^2-\omega_\lambda^2)(x_N^2 e^{-\omega_\lambda x_N})&=2 e^{-\omega_\lambda x_N}-4\omega_\lambda x_N e^{-\omega_\lambda x_N}.
\end{align*}
Inserting \eqref{eq:300} and \eqref{eq:300_0}, together with the last two relations, into \eqref{eq:320} furnishes
%Now we have
%\begin{equation}\label{eq:305}
%(\pd_N^2-\omega_\lambda^2)\wht u_J 
%=(-2\omega_\lambda\beta_J+2\gamma_J)e^{-\omega_\lambda x_N}-4\gamma_J \omega_\lambda x_N e^{-\omega_\lambda x_N},
%\end{equation}
%while
%\begin{equation}\label{eq:306}
%(\pd_N^2-\omega_\lambda^2)\wht \vph 
%=-2\omega_\lambda(i\xi'\cdot\beta'-\omega_\lambda \beta_N+2\gamma_N)e^{-\omega_\lambda x_N}.
%\end{equation}
%It thus holds that
\begin{alignat*}{2}
-4\lambda\omega_\lambda \gamma_j&=0, \quad
&\lambda(-2\omega_\lambda\beta_j+2\gamma_j)+2\mu_*\omega_\lambda i\xi_j\tau&=0, \\
-4\lambda \omega_\lambda\gamma_N&=0, \quad
&\lambda(-2\omega_\lambda +2\gamma_N)-2\mu_*\omega_\lambda^2\tau&=0,
\end{alignat*}
which yields %implies %that
\begin{align}
\gamma_J&=0, \label{eq:307} \\
-\lambda\beta_j +\mu_*  i\xi_j\tau&=0, \label{eq:308} \\
-\lambda\beta_N -\mu_*\omega_\lambda \tau&=0.  \label{eq:309} 
\end{align}

\begin{rema}
The relations \eqref{eq:307}-\eqref{eq:309} imply \eqref{eq:303_1} and \eqref{eq:303}.
\end{rema}

One has by \eqref{eq:300} and \eqref{eq:307}
\begin{equation}\label{eq:314}
\wht u_J=\alpha_J e^{-\omega_\lambda x_N}+\beta_J x_N e^{-\omega_\lambda x_N}, 
\end{equation}
and also by \eqref{eq:308} and \eqref{eq:309}
\begin{equation}\label{eq:312}
\beta_j = -\frac{i\xi_j}{\omega_\lambda}\beta_N.
\end{equation}
%\begin{align}
%\beta_j &= -\frac{i\xi_j}{\omega_\lambda}\beta_N, \label{eq:312} \\
%\tau&=-\frac{\mu_*^{-1}\lambda}{\omega_\lambda}\beta_N. \label{eq:312_2}
%\end{align}
%Furthermore, \eqref{eq:312} yields
%\begin{equation}\label{eq:313}
%i\xi'\cdot\beta'=\frac{|\xi'|^2}{\omega_\lambda}\beta_N.
%\end{equation}

Next, we consider the boundary conditions. By \eqref{eq:4} and \eqref{eq:314},
\begin{equation}\label{eq:316}
\alpha_j=\wht h_j(0), \quad \alpha_N=0. % \quad i\xi'\cdot\alpha'=i\xi'\cdot\wht h'(0).
\end{equation}
It then holds by the first relation of \eqref{eq:316} that
\begin{equation*}%\label{eq:321}
i\xi'\cdot\alpha'=i\xi'\cdot\wht\Bh'(0), \quad \wht\Bh'(0)=(\wht h_1(0),\dots,\wht h_{N-1}(0))^\SST.
\end{equation*}
Combining this relation with \eqref{eq:303_2} and $\alpha_N=0$ of \eqref{eq:316} furnishes
\begin{equation}\label{eq:321_2}
\sigma=i\xi'\cdot\wht\Bh'(0)+\beta_N,
\end{equation}
while \eqref{eq:309} implies
\begin{equation}\label{eq:321_3}
\tau=-\frac{\mu_*^{-1}\lambda}{\omega_\lambda}\beta_N.
\end{equation}
On the other hand, by \eqref{eq:44} and \eqref{eq:300_0},
$$
-\omega_\lambda\sigma+\tau=\lambda\wht g(0),
$$
which, combined with \eqref{eq:321_2} and \eqref{eq:321_3}, furnishes
$$
-\omega_\lambda (i\xi'\cdot\wht\Bh'(0)+\beta_N)-\frac{\mu_*^{-1}\lambda}{\omega_\lambda}\beta_N=\lambda\wht g(0).
$$
Solving this equation with respect to $\beta_N$, we have
\begin{equation}\label{eq:335}
\beta_N=-\frac{\omega_\lambda}{\omega_\lambda^2+\mu_*^{-1}\lambda}\left(\lambda\wht g(0)+\omega_\lambda i\xi'\cdot\wht \Bh'(0)\right).
\end{equation}

Finally, one has, by \eqref{eq:300_0}, \eqref{eq:314}, \eqref{eq:312}, \eqref{eq:316}, \eqref{eq:321_2}, and \eqref{eq:321_3}, 
\begin{align*}
\wht u_j(x_N)
&=\wht h_j(0)e^{-\omega_\lambda x_N}-\frac{i\xi_j}{\omega_\lambda}\beta_N x_N e^{-\omega_\lambda x_N}, \\
\wht u_N(x_N)
&=\beta_N x_N e^{-\omega_\lambda x_N}, \\
\wht \vph(x_N)
&=(i\xi'\cdot\wht\Bh'(0)+\beta_N)e^{-\omega_\lambda x_N}
-\frac{\mu_*^{-1}\lambda}{\omega_\lambda}\beta_Nx_N e^{-\omega_\lambda x_N},
\end{align*}
and sets $\wht\rho(x_N)=-\lambda^{-1}\wht\vph(x_N)$ in view of \eqref{eq:1}.
Recall the inverse partial Fourier transform given in \eqref{defi:IPFT} and
set $\rho=\CF_{\xi'}^{-1}[\wht\rho(x_N)](x')$ and $u_J=\CF_{\xi'}^{-1}[\wht u_J(x_N)](x')$.
Then $\rho$ and $\Bu=(u_1,\dots, u_N)^\SST$ solve the system \eqref{eq2:half-red}.

In the last part of this section, we prove the following lemma.

\begin{lemm}\label{lemm:multi_case5}
$(\omega_\lambda^2+\mu_*^{-1}\lambda)^{-1}\in\BBM_{-2,1}(\BC_+)$.
\end{lemm}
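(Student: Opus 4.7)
The plan is simple and direct. First I would rewrite the quantity in closed form: since $\omega_\lambda^2 = |\xi'|^2 + \mu_*^{-1}\lambda$ by the definition \eqref{defi:omega}, we have
\begin{equation*}
\omega_\lambda^2 + \mu_*^{-1}\lambda = |\xi'|^2 + 2\mu_*^{-1}\lambda.
\end{equation*}

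Next I would apply Lemma \ref{lemm:symbol0} (3), which asserts that $(|\xi'|^2 + a\lambda)^r \in \BBM_{2r,1}(\Sigma_{\vps_2})$ for every $a > 0$, $r \in \BR$, and $\vps_2 \in (0,\pi/2)$. Taking $a = 2\mu_*^{-1} > 0$ and $r = -1$ yields
\begin{equation*}
\left(\omega_\lambda^2 + \mu_*^{-1}\lambda\right)^{-1} = \left(|\xi'|^2 + 2\mu_*^{-1}\lambda\right)^{-1} \in \BBM_{-2,1}(\Sigma_{\vps_2}).
\end{equation*}

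Finally, since $\BBM_{r,j}(\Sigma_{\vps_2}) \subset \BBM_{r,j}(\BC_+)$ for any $\vps_2 \in (0,\pi/2)$ (as noted right before Lemma \ref{lemm:algebra}), the required membership $(\omega_\lambda^2+\mu_*^{-1}\lambda)^{-1}\in\BBM_{-2,1}(\BC_+)$ follows at once. There is no real obstacle here; the proof is essentially a one-line reduction to Lemma \ref{lemm:symbol0} once the algebraic identity $\omega_\lambda^2+\mu_*^{-1}\lambda = |\xi'|^2 + 2\mu_*^{-1}\lambda$ is observed, and no analogue of the delicate Lopatinski-type estimate from Lemma \ref{lemm:lopatinski} is needed because the denominator is already a clean polynomial expression in $|\xi'|^2$ and $\lambda$ with a strictly positive coefficient in front of $\lambda$.
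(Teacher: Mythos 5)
Your proof is correct and takes essentially the same route as the paper: rewrite $\omega_\lambda^2+\mu_*^{-1}\lambda$ as $|\xi'|^2+2\mu_*^{-1}\lambda$ using \eqref{defi:omega}, then invoke Lemma \ref{lemm:symbol0} (3) with $a=2\mu_*^{-1}$ and $r=-1$, and use the inclusion $\BBM_{r,j}(\Sigma_{\vps_2})\subset\BBM_{r,j}(\BC_+)$. Nothing to add.
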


\begin{proof}
Since $\omega_\lambda^2=|\xi'|^2+\mu_*^{-1}\lambda$, it holds that
$\omega_\lambda^2+\mu_*^{-1}\lambda=|\xi'|^2+2\mu_*^{-1}\lambda$.
One thus sees by Lemma \ref{lemm:symbol0} that  
$$
(\omega_\lambda^2+\mu_*^{-1}\lambda)^{-1}=(|\xi'|^2+2\mu_*^{-1}\lambda)^{-1}\in\BBM_{-2,1}(\BC_+).
$$
This completes the proof of the lemma.
\end{proof}

%%%%%%%%%%
\subsection{Proof of Theorem \ref{theo:half-red}}\label{subsec6-2} 
This subsection proves Theorem \ref{theo:half-red} by means of solution formulas obtained in Subsection \ref{subsec6-1}.

First, we consider the formula of $\rho$.
Note that by \eqref{eq:335}
\begin{align*}
i\xi'\cdot\wht\Bh'(0)+\beta_N
&=-\frac{\omega_\lambda}{\omega_\lambda^2+\mu_*^{-1}\lambda} \lambda\wht g(0)
+\frac{\mu_*^{-1}\lambda }{\omega_\lambda^2+\mu_*^{-1}\lambda}i\xi'\cdot\wht \Bh'(0).
\end{align*}
It thus holds that
\begin{align}\label{solrho:case5}
\rho&=
\CF_{\xi'}^{-1}\left[\frac{\omega_\lambda}{\omega_\lambda^2+\mu_*^{-1}\lambda}e^{-\omega_\lambda x_N}\wht g(0)\right](x') \\
&-\sum_{k=1}^{N-1}
\CF_{\xi'}^{-1}\left[\frac{\mu_*^{-1} i\xi_k}{\omega_\lambda^2+\mu_*^{-1}\lambda}e^{-\omega_\lambda x_N}\wht h_k(0)\right](x') \notag \\
&-\CF_{\xi'}^{-1}\left[\frac{\mu_*^{-1}\lambda}{\omega_\lambda^2+\mu_*^{-1}\lambda}x_N e^{-\omega_\lambda x_N}\wht g(0)\right](x') \notag \\
&-\sum_{k=1}^{N-1}\CF_{\xi'}^{-1}
\left[\frac{\mu_*^{-1}i\xi_k\omega_\lambda}{\omega_\lambda^2+\mu_*^{-1}\lambda}x_N e^{-\omega_\lambda x_N}\wht h_k(0)\right](x'). \notag
\end{align}

Next, we consider the formulas of $u_J$. By \eqref{eq:335},
\begin{align}\label{solu:case5}
u_j
&=\CF_{\xi'}^{-1}\left[e^{-\omega_\lambda x_N}\wht h_j(0)\right](x') \\
&+\CF_{\xi'}^{-1}\left[\frac{i\xi_j \lambda}{\omega_\lambda^2+\mu_*^{-1}\lambda}x_N e^{-\omega_\lambda x_N}\wht g(0)\right](x') \notag \\
&-\sum_{k=1}^{N-1}\CF_{\xi'}^{-1}
\left[\frac{\xi_j\xi_k\omega_\lambda}{\omega_\lambda^2+\mu_*^{-1}\lambda}x_N e^{-\omega_\lambda x_N}\wht h_k(0)\right](x'), \notag \\
u_N
&=-\CF_{\xi'}^{-1}\left[\frac{\lambda\omega_\lambda}{\omega_\lambda^2+\mu_*^{-1}\lambda}x_N e^{-\omega_\lambda x_N}\wht g(0)\right](x') \notag \\
&-\sum_{k=1}^{N-1}\CF_{\xi'}^{-1}
\left[\frac{i\xi_k\omega_\lambda^2}{\omega_\lambda^2+\mu_*^{-1}\lambda}x_N e^{-\omega_\lambda x_N}\wht h_k(0)\right](x'). \notag
\end{align}

By Lemmas \ref{lemm:algebra}, \ref{lemm:symbol0}, and \ref{lemm:multi_case5},
the symbols of $\rho$ and $u_J$ satisfy the following conditions:
For $\rho$, there hold
\begin{align}\label{multipier:rho_case5}
\frac{\omega_\lambda}{\omega_\lambda^2+\mu_*^{-1}\lambda},
\ \frac{\mu_*^{-1} i\xi_k}{\omega_\lambda^2+\mu_*^{-1}\lambda}&\in\BBM_{-1,1}(\BC_+), \\
\frac{\mu_*^{-1}\lambda}{\omega_\lambda^2+\mu_*^{-1}\lambda},
\ \frac{\mu_*^{-1}i\xi_k\omega_\lambda}{\omega_\lambda^2+\mu_*^{-1}\lambda}&\in \BBM_{0,1}(\BC_+); \notag
\end{align}
For $u_J$, there hold
\begin{align}\label{multipier:u_case5}
1&\in\BBM_{0,1}(\BC_+), \\
\frac{i\xi_j \lambda}{\omega_\lambda^2+\mu_*^{-1}\lambda},
\ \frac{\xi_j\xi_k\omega_\lambda}{\omega_\lambda^2+\mu_*^{-1}\lambda}&\in\BBM_{1,1}(\BC_+), \notag \\
\frac{\lambda\omega_\lambda}{\omega_\lambda^2+\mu_*^{-1}\lambda},
\ \frac{i\xi_k\omega_\lambda^2}{\omega_\lambda^2+\mu_*^{-1}\lambda}&\in \BBM_{1,1}(\BC_+). \notag
\end{align}

Finally, combining \eqref{solrho:case5}-\eqref{multipier:u_case5}
with Lemmas \ref{lemm:multiplier1} and \ref{lemm:multi-inhom} shows the existence
of solution operators $\CA^2(\lambda)$ and $\CB^2(\lambda)$
stated in Theorem \ref{theo:half-red}.
This completes the proof of Theorem \ref{theo:half-red} for Case V.

%%%%%%%%%%%%%%%%%%%%%%%%%%%%%%%%%%%%%%%%%%%%%%%%%%%%%%%%%%%%%%%%%%%%%%
%\bigskip
%\noindent{\bf Acknowledgments.}
%This research was partly supported by JSPS Grant-in-aid for Young Scientists (B) \#17K14224.

%%%%%%%%%%%%%%%%%%%%%%%%%%%%%%%%%%%%%%%%%%%%%%%%%%%%%%%%%%%%%%%%%%%%%%
\def\thesection{A}
\renewcommand{\theequation}{A.\arabic{equation}}
\section{}

This appendix proves Lemma \ref{lemm:multi-inhom} for\footnote{
Lemma \ref{lemm:multi-inhom} for $a>0$ follows from the result for $a=1$ and the definition of  the $\CR$-boundedness (cf. Definition \ref{defi:R}).
}
$a=1$. Set
$$
B=\sqrt{|\xi'|^2+ \lambda} \quad \text{for $(\xi',\lambda)\in \BR^{N-1}\times\Sigma_\vps$.}
$$
One then has

\begin{lemm}\label{lemm:A0}
Let $\vps\in(0,\pi/2)$.
\begin{enumerate}[$(1)$]
%\item
%For any $(\xi',\lambda)\in\BR^{N-1}\times\Sigma_\vps$, there holds
%$$
%\Re B\geq b_\vps (|\lambda|^{1/2}+|\xi'|)
%$$
%with some positive constant $b_\vps$ depending only on $\vps$.
\item
For $r\in\BR$, there holds $B^r\in\BBM_{r,1}(\Sigma_\vps)$.
\item
For $x_N>0$, $n=0,1$, and multi-index $\alpha'\in\BN_0^{N-1}$, there holds
$$
\left|\pd_{\xi'}^{\alpha'}\left(\lambda\frac{d}{d\lambda}\right)^n e^{-B x_N}\right| 
\leq C_{\alpha',\vps} (|\lambda|^{1/2}+|\xi'|)^{-|\alpha'|}e^{-b_\vps(|\lambda|^{1/2}+|\xi'|)x_N},
$$
where $(\xi',\lambda)\in\BR^{N-1}\times\Sigma_\vps$,
with a positive constant $C_{\alpha',\vps}$ independent of $x_N$
and a positive constant $b_\vps$ depending only on $\vps$.
\end{enumerate}
\end{lemm}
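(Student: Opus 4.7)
The plan is as follows. Assertion $(1)$ is essentially a restatement of Lemma \ref{lemm:symbol0}$(3)$: taking $a=1$ there gives $(|\xi'|^2+\lambda)^s\in\BBM_{2s,1}(\Sigma_\vps)$ for every $s\in\BR$, and choosing $s=r/2$ yields $B^r=(|\xi'|^2+\lambda)^{r/2}\in\BBM_{r,1}(\Sigma_\vps)$. Thus $(1)$ requires no additional work and will be invoked as a tool in the proof of $(2)$.

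For assertion $(2)$ I would first establish the pointwise lower bound $\Re B\geq c_\vps(|\lambda|^{1/2}+|\xi'|)$ on $\BR^{N-1}\times\Sigma_\vps$ for some $c_\vps>0$. This is obtained by a verbatim copy of the argument used for $\omega_\lambda$ in Lemma \ref{lemm:symbol0}$(2)$, since the only structural feature used there is the positivity of the coefficient of $\lambda$ inside the square root. This bound immediately yields $|e^{-Bx_N}|\leq e^{-c_\vps(|\lambda|^{1/2}+|\xi'|)x_N}$, which settles the case $n=0$, $\alpha'=0$.

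For $n=0$ and $|\alpha'|\geq 1$ I apply Bell's formula, in the form recalled in the proof of Corollary \ref{coro:lopatinski}, to $f(t)=e^{-tx_N}$ and $g(\xi')=B(\xi',\lambda)$. Since $f^{(k)}(t)=(-x_N)^k e^{-tx_N}$ and, by $(1)$, each factor $\pd_{\xi'}^{\alpha_j'}B$ is bounded by a constant times $(|\lambda|^{1/2}+|\xi'|)^{1-|\alpha_j'|}$, every term in the expansion is dominated by
\[
C_{\alpha',\vps}\, x_N^k\,(|\lambda|^{1/2}+|\xi'|)^{k-|\alpha'|}\,e^{-c_\vps(|\lambda|^{1/2}+|\xi'|)x_N}.
\]
The elementary bound $t^k e^{-t/2}\leq C_k$ for $t\geq 0$, applied with $t=c_\vps(|\lambda|^{1/2}+|\xi'|)x_N$, absorbs $x_N^k$ together with the $k$ surplus powers of $(|\lambda|^{1/2}+|\xi'|)$ into half of the exponential, yielding the claimed estimate with $b_\vps:=c_\vps/2$.

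For $n=1$ the identity $2B\,(dB/d\lambda)=1$ gives
\[
\lambda\frac{d}{d\lambda}e^{-Bx_N}=-\frac{\lambda x_N}{2B}\,e^{-Bx_N}.
\]
Since $\lambda/B\in\BBM_{1,1}(\Sigma_\vps)$ by $(1)$, applying $\pd_{\xi'}^{\alpha'}$ via the Leibniz rule and inserting the $n=0$ estimate already proved produces a finite sum of terms bounded by $C_{\alpha',\vps}\,x_N\,(|\lambda|^{1/2}+|\xi'|)^{1-|\alpha'|}e^{-b_\vps(|\lambda|^{1/2}+|\xi'|)x_N}$; the single extra $x_N$ together with one power of $(|\lambda|^{1/2}+|\xi'|)$ is absorbed by the same device, shrinking $b_\vps$ further if required. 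The only real point demanding care is the bookkeeping in the Bell expansion; once the lower bound on $\Re B$ is in place, the rest is a routine combination of the chain rule, the Leibniz rule, and the inequality $t^k e^{-t/2}\leq C_k$.
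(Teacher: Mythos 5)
Your proof is correct. Part $(1)$ is handled exactly as in the paper: the paper invokes Lemma \ref{lemm:symbol0}(3) with $\mu_*=1$ (so that $\omega_\lambda=B$), while you invoke the same lemma through the form $(|\xi'|^2+a\lambda)^r\in\BBM_{2r,1}$ with $a=1$; these are identical in substance. For part $(2)$ the paper does not give an argument at all but simply cites \cite[Lemma~5.3]{SS12}, whereas you supply a self-contained proof: the lower bound $\Re B\geq c_\vps(|\lambda|^{1/2}+|\xi'|)$ (which is Lemma \ref{lemm:symbol0}(2) with $\mu_*=1$, so ``verbatim copy'' is the right description), Bell's formula for the $\xi'$-derivatives with the absorption device $t^k e^{-t/2}\leq C_k$, and the explicit identity $\lambda\frac{d}{d\lambda}e^{-Bx_N}=-\tfrac{\lambda x_N}{2B}e^{-Bx_N}$ combined with Leibniz for $n=1$. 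This is the standard argument and is precisely the kind of proof one finds in the cited reference; it costs you the small extra bookkeeping of tracking $k$ powers of $x_N$ against $k$ surplus powers of $|\lambda|^{1/2}+|\xi'|$, but buys a proof that is entirely contained within the paper's own machinery rather than deferring to an external lemma.
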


\begin{proof}
(1), The required property follows from Lemma \ref{lemm:symbol0} with $\mu_*=1$.

(2). See \cite[Lemma 5.3]{SS12}. 
\end{proof}

At this point, we introduce two lemmas. 
The first one is essentially proved in \cite[Lemma 5.4]{SS12},
while the second one can be proved similarly to \cite[Lemmas 5.4 and 5.6]{SS12} (cf. also \cite{Saito1}).

%The following two lemmas are essentially proved in the proof of \cite[Lemma 5.4, Lemma 5.6]{SS12}.

\begin{lemm}\label{lemm:A2}
Let $q\in(1,\infty)$ and $\CK_\lambda(x)$ $(\lambda\in\BC_+)$ be a function on $\BR_+^N$.
Assume
$$
\left|\left(\lambda\frac{d}{d\lambda}\right)^n\CK_\lambda(x)\right|\leq \frac{M}{|x|^N} \quad (x\in\BR_+^N,\lambda\in\BC_+)
$$
for a positive constant $M$ and $n=0,1$, and set for $x=(x',x_N)\in\BR_+^N$ and $\lambda\in\BC_+$
$$
[K(\lambda)f](x)=\int_{\BR_+^N}\CK_\lambda(x'-y',x_N+y_N)f(y)\intd y.
$$
Then $K(\lambda)\in\hlm(\BC_+,\CL(L_q(\BR_+^N)))$,
and also for $n=0,1$
$$
\CR_{\CL(L_q(\BR_+^N))}
\left(\left\{\left.\left(\lambda\frac{d}{d\lambda}\right)^n K(\lambda)\right|\lambda\in\BC_+ \right\}\right) \leq C_{N,q}M,
$$
with some positive constant $C_{N,q}$.
\end{lemm}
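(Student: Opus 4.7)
The plan is to dominate $|(\lambda d/d\lambda)^n [K(\lambda)f](x)|$ pointwise, uniformly in $\lambda\in\BC_+$ and $n\in\{0,1\}$, by a single positive, $\lambda$-independent integral operator $T_0$ acting on $|f|$, and then to obtain the $\CR$-bound from the $L_q$-boundedness of $T_0$ via Kahane's inequality together with an $\ell^2$-type Minkowski argument. Differentiating under the integral sign (justified by dominated convergence using the pointwise bound for $n=1$) and invoking the hypothesis on $\CK_\lambda$ gives
$$
\left|\left(\lambda\frac{d}{d\lambda}\right)^n[K(\lambda)f](x)\right|\leq M\,[T_0|f|](x),\quad [T_0 g](x):=\int_{\BR_+^N}\frac{g(y)}{(|x'-y'|^2+(x_N+y_N)^2)^{N/2}}\intd y.
$$

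The main technical step, and the one I expect to require the most care, is to show $T_0\in\CL(L_q(\BR_+^N))$. I will proceed by a slicing argument: for fixed $x_N,y_N>0$, the inner integration in $y'\in\BR^{N-1}$ is a convolution with the kernel $(|\cdot|^2+(x_N+y_N)^2)^{-N/2}$, whose $L_1(\BR^{N-1})$-norm equals $C_N(x_N+y_N)^{-1}$ by the scaling $y'=(x_N+y_N)z'$. Young's convolution inequality then reduces the $L_q$-bound for $T_0$ to bounding the one-dimensional operator $g\mapsto\int_0^\infty g(y_N)/(x_N+y_N)\intd y_N$ on $L_q(0,\infty)$, which is the classical Hardy--Hilbert inequality, valid precisely for $q\in(1,\infty)$ with constant $\pi/\sin(\pi/q)$. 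This is the only place where the specific reflected Poisson-type structure of the kernel, rather than merely its radial size, is used: the shift $x_N+y_N$ (instead of $x_N-y_N$) is what keeps $|(x'-y',x_N+y_N)|$ bounded below by $x_N+y_N$ and makes the slicing reduction go through.

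The remaining steps are then routine. For the $\CR$-bound, given $\{\lambda_j\}_{j=1}^m\subset\BC_+$ and $\{f_j\}_{j=1}^m\subset L_q(\BR_+^N)$, Kahane's inequality (Remark \ref{rema:Kahane}) combined with Fubini makes both sides of the defining inequality comparable, up to constants depending only on $q$, to the square-function norms $\bigl\|\bigl(\sum_j\bigl|(\lambda d/d\lambda)^n K(\lambda_j)f_j\bigr|^2\bigr)^{1/2}\bigr\|_{L_q}$ and $\|(\sum_j|f_j|^2)^{1/2}\|_{L_q}$. The pointwise domination by $MT_0|f_j|$, followed by Minkowski's integral inequality in the $\ell^2$-norm (legitimate because $T_0$ has a non-negative kernel), yields
$$
\Bigl(\sum_{j=1}^m\bigl|(\lambda d/d\lambda)^n K(\lambda_j)f_j(x)\bigr|^2\Bigr)^{1/2}\leq M\,T_0\Bigl(\bigl(\sum_{j=1}^m|f_j|^2\bigr)^{1/2}\Bigr)(x),
$$
and the $L_q$-boundedness of $T_0$ then produces the $\CR$-bound with constant $C_{N,q}M$. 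Finally, holomorphy of $\lambda\mapsto K(\lambda)\in\CL(L_q(\BR_+^N))$ follows from the $n=1$ pointwise bound via dominated convergence applied to the difference quotients.
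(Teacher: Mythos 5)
Your proof is correct, and it reconstructs in detail the argument that the paper itself merely defers to by citing [SS12, Lemma~5.4]: the three standard ingredients are (i) pointwise domination of $(\lambda\,d/d\lambda)^nK(\lambda)$ by the positive, $\lambda$-independent operator $T_0$ with kernel $|x'-y',x_N+y_N|^{-N}$, (ii) $L_q$-boundedness of $T_0$ by slicing in $y'$ and reducing, via Young's inequality and the Hardy--Hilbert inequality, to the one-dimensional Hilbert operator on $(0,\infty)$ (which is exactly where $1<q<\infty$ is used), and (iii) the square-function characterization of the Rademacher averages in $L_q$ together with Minkowski's inequality for the non-negative kernel of $T_0$. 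These are precisely the steps one finds in the Shibata--Shimizu proof that the paper invokes, so there is no genuine gap or divergence of method; the only point worth polishing is the final holomorphy claim, where ``dominated convergence on difference quotients'' should be accompanied by a Cauchy-integral estimate on the second $\lambda$-derivative (obtainable from the $n=1$ bound and holomorphy of $\CK_\lambda$) to upgrade pointwise convergence of difference quotients to convergence in operator norm.
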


\begin{lemm}\label{lemm:A2_2}
Let $q\in(1,\infty)$. Assume 
$$
k(\xi',\lambda)\in\BBM_{1,1}(\BC_+), \quad l_m(\xi',\lambda)\in\BBM_{m-3}(\BC_+) \quad (m=1,2),
$$
and set for $x=(x',x_N)\in\BR_+^N$ and $\lambda\in\BC_+$
\begin{align*}
[K(\lambda)f](x)&=\int_0^\infty
\CF_{\xi'}^{-1}\left[k(\xi',\lambda)e^{-B(x_N+y_N)}\wht f(\xi',y_N)\right](x')\intd y_N, \\
[L_m(\lambda)f](x)&=\int_0^\infty
\CF_{\xi'}^{-1}\left[l_m(\xi',\lambda)e^{-B(x_N+y_N)}\wht f(\xi',y_N)\right](x')\intd y_N \quad (m=1,2).
\end{align*}
Then the following assertions hold true:
\begin{enumerate}[$(1)$]
\item
There holds $K(\lambda)\in\hlm(\BC_+,\CL(L_q(\BR_+^N)))$, and also for $n=0,1$
$$
\CR_{\CL(L_q(\BR_+^N))}
\left(\left\{\left.\left(\lambda\frac{d}{d\lambda}\right)^n K(\lambda) \right|\lambda\in\BC_+\right\}\right)\leq C_{N,q},
$$
with some positive constant $C_{N,q}$.
\item
There holds $L_1(\lambda)\in\hlm(\BC_+,\CL(L_q(\BR_+^N),H_q^3(\BR_+^N)))$, and also for $n=0,1$
\begin{align*}
\CR_{\CL(L_q(\BR_+^N),\FA_q^0(\BR_+^N))}
\left(\left\{\left.\left(\lambda\frac{d}{d\lambda}\right)^n\left(\CS_\lambda^0 L_1(\lambda)\right) \right|\lambda\in\BC_+\right\}\right)\leq C_{N,q},
\end{align*}
with some positive constant $C_{N,q}$.
\item
There holds $L_2(\lambda)\in\hlm(\BC_+,\CL(L_q(\BR_+^N),H_q^2(\BR_+^N)))$, and also for $n=0,1$
\begin{align*}
\CR_{\CL(L_q(\BR_+^N),L_q(\BR_+^N)^{N^2+N+1})}
\left(\left\{\left.\left(\lambda\frac{d}{d\lambda}\right)^n\left(\CT_\lambda L_2(\lambda)\right) \right|\lambda\in\BC_+\right\}\right)\leq C_{N,q},
\end{align*}
with some positive constant $C_{N,q}$.
\end{enumerate}
\end{lemm}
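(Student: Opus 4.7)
The plan is to reduce all three assertions to Lemma \ref{lemm:A2}. Write $K(\lambda)$ as a half-space convolution operator
\begin{equation*}
[K(\lambda)f](x)=\int_{\BR_+^N}\CK_\lambda(x'-y',x_N+y_N)f(y)\intd y, \quad \CK_\lambda(x',z)=\CF_{\xi'}^{-1}[k(\xi',\lambda)e^{-Bz}](x').
\end{equation*}
Since $|x|$ and $|x'|+z$ are comparable on $\BR_+^N$, Lemma \ref{lemm:A2} will yield assertion (1) once we verify
\begin{equation*}
\left|\left(\lambda\frac{d}{d\lambda}\right)^n\CK_\lambda(x',z)\right|\leq \frac{C}{(|x'|+z)^N} \quad (n=0,1),
\end{equation*}
uniformly for $\lambda\in\BC_+$ and $(x',z)\in\BR_+^N$.

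To produce this bound, first note that $\lambda d/d\lambda$ preserves the multiplier class $\BBM_{1,1}(\BC_+)$ as well as the pointwise estimate in Lemma \ref{lemm:A0}(2), so Leibniz reduces matters to $n=0$. From $k\in\BBM_{1,1}(\BC_+)$ and Lemma \ref{lemm:A0}(2), Leibniz then gives
\begin{equation*}
\left|\pd_{\xi'}^{\alpha'}\bigl(k(\xi',\lambda)e^{-Bz}\bigr)\right|\leq C_{\alpha'}(|\lambda|^{1/2}+|\xi'|)^{1-|\alpha'|}e^{-b(|\lambda|^{1/2}+|\xi'|)z}, \quad \alpha'\in\BN_0^{N-1}.
\end{equation*}
Using $\alpha'=0$ and bounding the resulting integral by the rescaling $\xi'\to\xi'/z$ yields $|\CK_\lambda(x',z)|\leq Cz^{-N}$. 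Integration by parts in $\xi'$, applied with $|\alpha'|=N$ and followed by a careful dyadic splitting of $\xi'$-space (separating $|\xi'|\lesssim|\lambda|^{1/2}$, $|\lambda|^{1/2}\lesssim|\xi'|\lesssim 1/z$, and $|\xi'|\gtrsim 1/z$ to tame the borderline order-$1$ singularity of $k$), produces the complementary bound $|\CK_\lambda(x',z)|\leq C|x'|^{-N}$. Taking the minimum of the two estimates and using $\max(|x'|,z)\geq (|x'|+z)/2$ delivers the desired kernel bound, while the holomorphy of $K(\lambda)$ in $\lambda\in\BC_+$ is immediate from that of $k$ and $B$.

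For assertions (2) and (3), push the derivatives in $\CS_\lambda^0$ and $\CT_\lambda$ inside the Fourier integral: $\pd_{x_j}$ ($j<N$) multiplies the symbol by $i\xi_j$, $\pd_{x_N}$ multiplies it by $-B$, and multiplication by $\lambda^{1/2}$ amounts to multiplication by $\lambda^{1/2}\in\BBM_{1,1}(\BC_+)$. By Lemmas \ref{lemm:algebra} and \ref{lemm:symbol0}, each of these operations raises the multiplier order by exactly one. Hence every scalar component of $\CS_\lambda^0 L_1(\lambda)f$ or $\CT_\lambda L_2(\lambda)f$ takes the form $K(\lambda)f$ for some new symbol lying in $\BBM_{1,1}(\BC_+)$ (since $l_1\in\BBM_{-2,1}(\BC_+)$ is raised by at most three orders and $l_2\in\BBM_{-1,1}(\BC_+)$ by at most two). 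Assertion (1) combined with Lemma \ref{prop:R} then furnishes the required $\CR$-bounds.

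The main obstacle is the uniform $|x'|^{-N}$ bound: the order-$1$ symbol $k$ sits exactly on the borderline after $N$ integrations by parts in $\xi'$, and a naive estimate leaves a logarithmic factor $|\log(|\lambda|^{1/2}z)|$ that fails to vanish as $|\lambda|^{1/2}z\to 0$. Exploiting the exponential factor $e^{-Bz}$ in the rescaling requires the careful dyadic splitting described above, which is precisely the type of technical manipulation carried out in \cite[Lemmas 5.4 and 5.6]{SS12}.
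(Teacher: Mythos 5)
Your high-level plan — reduce assertions (2) and (3) to (1) by pushing the $\CS_\lambda^0$/$\CT_\lambda$ derivatives onto the symbol, then reduce (1) to Lemma~\ref{lemm:A2} via a kernel bound — is the right skeleton, and the order bookkeeping (every component of $\CS_\lambda^0 L_1$ and $\CT_\lambda L_2$ lands at effective order~$1$) is correct. The gap is in the crucial technical step. You correctly observe that $N$-fold integration by parts leaves a logarithmic factor $\log\bigl(1/(|\lambda|^{1/2}z)\bigr)$, but the proposed fix — a dyadic splitting of $\xi'$-space — does not remove it: the logarithm is the honest size of $\int_{|\lambda|^{1/2}}^{1/z}(|\lambda|^{1/2}+|\xi'|)^{1-N}e^{-b(|\lambda|^{1/2}+|\xi'|)z}\intd\xi'$ over your middle annulus, and no rearrangement of that \emph{same} pointwise estimate kills $\int dr/r$. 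Trying $N\pm 1$ integrations by parts instead gives $|\lambda|^{-1/2}|x'|^{-N-1}$ and $z^{-1}|x'|^{1-N}$, and together with the $z^{-N}$ bound these still leave the regime $z<|x'|<|\lambda|^{-1/2}$ uncontrolled. So the claimed uniform kernel bound $|\CK_\lambda(x',z)|\leq C|x'|^{-N}$ is not established.

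The device that actually works — and which is what the cited \cite[Lemmas 5.4, 5.6]{SS12} use, and which this paper carries out explicitly in Case~3 of the proof of Lemma~\ref{lemm:A3} — is a \emph{symbol} decomposition rather than a spatial one. Write
\[
k(\xi',\lambda)=\lambda^{1/2}\,\frac{\lambda^{1/2}k(\xi',\lambda)}{B^2}
+\sum_{j=1}^{N-1}|\xi'|\,\frac{\xi_j}{|\xi'|}\,\frac{\xi_j k(\xi',\lambda)}{B^2},
\]
where $\lambda^{1/2}k/B^{2}\in\BBM_{0,1}(\BC_+)$ and $(\xi_j/|\xi'|)(\xi_j k/B^{2})\in\BBM_{0,2}(\BC_+)$ by Lemmas~\ref{lemm:algebra} and~\ref{lemm:A0}. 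The first piece carries a bare $\lambda^{1/2}$ in front, which — exactly as in Case~1 of Lemma~\ref{lemm:A3} — cancels the $|\lambda|^{-1/2}$ produced by $\int(|\lambda|^{1/2}+|\xi'|)^{-N}\intd\xi'$ after $N$ integrations by parts, giving $C|x'|^{-N}$ cleanly. The second piece is of the form $|\xi'|$ times an order-$0$ type-$2$ symbol, for which the kernel bound $C(|x'|^2+z^2)^{-N/2}$ follows from \cite[Theorem 2.3]{SS01}, exactly as in Case~2 of Lemma~\ref{lemm:A3}. Replacing your dyadic-splitting step with this decomposition closes the gap, and the rest of your argument (the $z^{-N}$ bound and the reductions for assertions (2)–(3)) then goes through.
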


One now proves

\begin{lemm}\label{lemm:A3}
Let $q\in(1,\infty)$. % and $\vps\in(0,\pi/2)$.
Assume
$$
k(\xi',\lambda)\in \BBM_{1,1}(\BC_+), \quad
l(\xi',\lambda) \in\BBM_{1,2}(\BC_+), \quad
m(\xi',\lambda)\in \BBM_{2,1}(\BC_+),
$$
and set for $x=(x',x_N)\in\BR_+^N$ and $\lambda\in\BC_+$
\begin{align*}
[K(\lambda) f](x) &=\int_0^\infty
\CF_{\xi'}^{-1}\left[\lambda^{1/2}k(\xi',\lambda) (x_N+y_N) e^{-B(x_N+y_N)}\wht f(\xi',y_N)\right](x')\intd y_N, \\
[L(\lambda)f](x) &=\int_0^\infty
\CF_{\xi'}^{-1}\left[|\xi'|l(\xi',\lambda)(x_N+y_N)e^{-B(x_N+y_N)}\wht f(\xi',y_N)\right](x')\intd y_N, \\
[M(\lambda)f](x) &= \int_0^\infty
\CF_{\xi'}^{-1}\left[m(\xi',\lambda)(x_N+y_N)e^{-B(x_N+y_N)}\wht f(\xi',y_N)\right](x')\intd y_N.
\end{align*}
Then $K(\lambda), L(\lambda), M(\lambda) \in\hlm(\BC_+,\CL(L_q(\BR_+^N)))$,
and also for $n=0,1$
\begin{align*}
\CR_{\CL(L_q(\BR_+^N))}
\left(\left\{\left(\left.\lambda\frac{d}{d\lambda}\right)^n K(\lambda)\right| \lambda\in\BC_+\right\}\right)
&\leq C_{N,q}, \\
\CR_{\CL(L_q(\BR_+^N))}
\left(\left\{\left(\left.\lambda\frac{d}{d\lambda}\right)^n L(\lambda)\right| \lambda\in\BC_+\right\}\right)
&\leq C_{N,q}, \\
\CR_{\CL(L_q(\BR_+^N))}
\left(\left\{\left(\left.\lambda\frac{d}{d\lambda}\right)^n M(\lambda)\right| \lambda\in\BC_+\right\}\right)
&\leq C_{N,q},
\end{align*}
with some positive constant $C_{N,q}$
\end{lemm}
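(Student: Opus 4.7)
The plan is to reduce each of $K(\lambda)$, $L(\lambda)$, $M(\lambda)$ to the framework of Lemma~\ref{lemm:A2}. By Fubini and the convolution theorem, each may be written as
\[
[T(\lambda)f](x)=\int_{\BR^N_+}\CK^T_\lambda(x'-y',\,x_N+y_N)\,f(y)\intd y,\qquad T\in\{K,L,M\},
\]
with kernel
\[
\CK^T_\lambda(z',s)=\frac{1}{(2\pi)^{N-1}}\int_{\BR^{N-1}}e^{iz'\cdot\xi'}\,a^T(\xi',\lambda)\,s\,e^{-Bs}\intd\xi',
\]
where $a^K=\lambda^{1/2}k$ and $a^M=m$ belong to $\BBM_{2,1}(\BC_+)$, while $a^L=|\xi'|\,l\in\BBM_{2,2}(\BC_+)$ by Lemma~\ref{lemm:algebra} (since $|\xi'|\in\BBM_{1,2}(\BC_+)$). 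By Lemma~\ref{lemm:A2} it then suffices to prove the pointwise bound $|(\lambda d/d\lambda)^n\CK^T_\lambda(z',s)|\leq C_{N,q}\,(|z'|^2+s^2)^{-N/2}$ for $n=0,1$, uniformly in $(z',s)\in\BR^{N-1}\times(0,\infty)$ and $\lambda\in\BC_+$.

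The central mechanism is that the factor $s\,e^{-Bs}$ effectively lowers the order of the symbol by one. Indeed, combining Lemma~\ref{lemm:A0}(2) with the elementary inequality $t^m e^{-t/2}\leq C_m$ applied to $t=b_\vps(|\lambda|^{1/2}+|\xi'|)s$, one obtains
\[
\bigl|s\,\pd^{\alpha'}_{\xi'}e^{-Bs}\bigr|\leq C_{\alpha'}(|\lambda|^{1/2}+|\xi'|)^{-|\alpha'|-1}\,e^{-b_\vps(|\lambda|^{1/2}+|\xi'|)s/2}.
\]
By Leibniz's rule this yields, for $T\in\{K,M\}$,
\[
\bigl|\pd^{\alpha'}_{\xi'}(a^T\,s\,e^{-Bs})\bigr|\leq C_{\alpha'}(|\lambda|^{1/2}+|\xi'|)^{1-|\alpha'|}\,e^{-b(|\lambda|^{1/2}+|\xi'|)s/2},
\]
and for $T=L$ the analogous bound with $(|\lambda|^{1/2}+|\xi'|)^{-|\alpha'|}$ replaced by $|\xi'|^{-|\alpha'|}$. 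Since $\lambda(d/d\lambda)$ preserves each $\BBM_{r,j}(\BC_+)$ by definition, the same estimates govern $(\lambda d/d\lambda)\CK^T_\lambda$, so the case $n=1$ reduces to $n=0$.

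For the type~1 cases $T\in\{K,M\}$, the identity $(iz'_j)^{\alpha_j}\CK^T_\lambda=\CF^{-1}_{\xi'}[\pd^{\alpha_j}_{\xi'_j}(a^T s\,e^{-Bs})]$ together with the above derivative estimate produces $|z'|^{|\alpha'|}|\CK^T_\lambda(z',s)|\leq C\int(|\lambda|^{1/2}+|\xi'|)^{1-|\alpha'|}e^{-b(|\lambda|^{1/2}+|\xi'|)s/2}\intd\xi'$; evaluating in spherical coordinates with the split $|\xi'|\leq|\lambda|^{1/2}$, $|\xi'|>|\lambda|^{1/2}$ and using the absorption trick to convert remaining factors of $s(|\lambda|^{1/2}+|\xi'|)$ into halved exponential decay, one obtains inequalities of the form $|z'|^{N-1}s\,|\CK^T_\lambda|+s^N|\CK^T_\lambda|\leq C$, which combine via $(|z'|^2+s^2)^{N/2}\lesssim|z'|^N+s^N$ to the required pointwise decay.

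The principal obstacle is the type~2 case $T=L$, where the factor $|\xi'|^{-|\alpha'|}$ is integrable over $\BR^{N-1}$ only for $|\alpha'|\leq N-2$, blocking the naive $N$-fold integration by parts. Following \cite[Lemma 5.4]{SS12} (and \cite{Saito1}), the remedy is to split the $\xi'$-integral into low-frequency ($|\xi'|\leq 1/(|z'|+s)$) and high-frequency ($|\xi'|>1/(|z'|+s)$) parts: on the low-frequency piece one estimates directly without integrating by parts, so the singularity $|\xi'|^{-|\alpha'|}$ never appears; on the high-frequency piece, $N-2$ tangential integrations by parts combined with the exponential damping $e^{-b(|\lambda|^{1/2}+|\xi'|)s/2}$ and two additional factors of $(|z'|+s)^{-1}$ extracted from the remaining symbol order supply the missing decay. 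Reassembling these estimates yields $|\CK^L_\lambda(z',s)|\leq C(|z'|^2+s^2)^{-N/2}$, and invoking Lemma~\ref{lemm:A2} completes the proof.
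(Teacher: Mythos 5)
There is a genuine gap in your treatment of $M(\lambda)$. You lump $K$ and $M$ together as a single ``type 1'' case because $\lambda^{1/2}k$ and $m$ both lie in $\BBM_{2,1}(\BC_+)$. But the paper's estimate for $K$ uses the explicit $\lambda^{1/2}$ factor in an essential way: writing the symbol as $\lambda^{1/2}\cdot k$ with $k\in\BBM_{1,1}$, absorbing $(x_N+y_N)$ gives the bound $|\lambda|^{1/2}(|\lambda|^{1/2}+|\xi'|)^{-|\alpha'|}e^{-(b/2)(|\lambda|^{1/2}+|\xi'|)x_N}$, and then $N$-fold integration by parts (so $|\alpha'|=N$) produces $|\lambda|^{1/2}\int_{\BR^{N-1}}(|\lambda|^{1/2}+|\xi'|)^{-N}\,d\xi' = C|\lambda|^{1/2}\cdot|\lambda|^{-1/2}=C$, i.e.\ the prefactor $|\lambda|^{1/2}$ exactly cancels. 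For a generic $m\in\BBM_{2,1}$ there is no such factor to pull out: after absorbing $(x_N+y_N)$ you are left with $(|\lambda|^{1/2}+|\xi'|)^{1-|\alpha'|}$, and at $|\alpha'|=N$ the integral $\int_{\BR^{N-1}}(|\lambda|^{1/2}+|\xi'|)^{1-N}\,d\xi'$ diverges. Your fallback — $N-1$ integrations by parts plus absorption of an extra power of $s$ — would yield $|z'|^{N-1}s\,|\CK^M_\lambda|\leq C$ and $s^N\,|\CK^M_\lambda|\leq C$, and as you yourself write these only ``combine via $(|z'|^2+s^2)^{N/2}\lesssim|z'|^N+s^N$''. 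But they do not combine to give $|\CK^M_\lambda|\leq C(|z'|^2+s^2)^{-N/2}$: letting $s\to0$ with $|z'|>0$ fixed, both bounds degenerate (they give $|\CK^M_\lambda|\lesssim s^{-1}$ at best) whereas you need a bound of order $|z'|^{-N}$.

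The paper closes this gap for $M(\lambda)$ by first decomposing the symbol algebraically using $B^2=|\xi'|^2+\lambda$:
\begin{equation*}
m(\xi',\lambda)=\frac{B^2m(\xi',\lambda)}{B^2}
=\lambda^{1/2}\frac{\lambda^{1/2}m(\xi',\lambda)}{B^2}
+\sum_{j=1}^{N-1}|\xi'|\,\frac{\xi_j}{|\xi'|}\,\frac{\xi_j m(\xi',\lambda)}{B^2},
\end{equation*}
where $\lambda^{1/2}m/B^2\in\BBM_{1,1}(\BC_+)$ and $\frac{\xi_j}{|\xi'|}\frac{\xi_jm}{B^2}\in\BBM_{1,2}(\BC_+)$ by Lemmas~\ref{lemm:algebra} and~\ref{lemm:A0}. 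This expresses $M(\lambda)$ as a sum of one operator of the $K$-form and $N-1$ operators of the $L$-form, and Lemma~\ref{prop:R}(1) then gives the required $\CR$-bound. Your proposal omits this decomposition step entirely. (Your treatments of $K$ and $L$ are, modulo the imprecision in the combination step noted above, in line with the paper's Cases~1 and~2; the missing idea is Case~3.)
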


\begin{proof}
Let $n=0,1$ and $\lambda\in\BC_+$ in this proof.

{\bf Case 1}. This case considers $K(\lambda)$.
Let $\CK_\lambda(x)$ be given by 
$$
\CK_\lambda(x)=\CK_\lambda(x',x_N)
=\CF_{\xi'}^{-1}\left[\lambda^{1/2}k(\xi',\lambda) x_N e^{-Bx_N}\right](x').
$$
Then $K(\lambda)f$ is written as
\begin{equation*}
[K(\lambda)f](x)
=\int_{\BR_+^N}\CK_\lambda(x'-y',x_N+y_N)f(y)\intd y.
\end{equation*}
Since $k(\xi',\lambda)\in\BBM_{1,1}(\BC_+)$, it holds by the Leibniz formula and Lemma \ref{lemm:A0} that
\begin{align}\label{eq:1_appendix}
&\left|\pd_{\xi'}^{\alpha'}\left(\lambda\frac{d}{d\lambda}\right)^n\left(\lambda^{1/2} k(\xi',\lambda)x_N e^{-Bx_N}\right)\right| \\
&\leq C_{\alpha'}|\lambda|^{1/2}(|\lambda|^{1/2}+|\xi'|)^{1-|\alpha'|}x_N e^{-b(|\lambda|^{1/2}+|\xi'|)x_N} \notag \\
&\leq C_{\alpha'}|\lambda|^{1/2} (|\lambda|^{1/2}+|\xi'|)^{-|\alpha'|}e^{-(b/2)(|\lambda|^{1/2}+|\xi'|)x_N}, \notag
\end{align}
where $b=b_\epsilon$ of Lemma \ref{lemm:A0}.
On the other hand, using the identity:
$$
e^{ix'\cdot\xi'}=\sum_{|\alpha'|=j}\left(\frac{-ix'}{|x'|^2}\right)^{\alpha'} \pd_{\xi'}^{\alpha'}e^{ix'\cdot\xi'} \quad (j\in\BN_0),
$$
we have, by integration by parts,
\begin{align*}
\left(\lambda\frac{d}{d\lambda}\right)^n \CK_\lambda(x)
&=\frac{1}{(2\pi)^{N-1}}\int_{\BR^{N-1}}e^{ix'\cdot\xi'}\left(\lambda\frac{d}{d\lambda}\right)^n
\left(\lambda^{1/2}k(\xi',\lambda) x_N e^{-Bx_N}\right) \intd \xi' \\
&=\frac{1}{(2\pi)^{N-1}}\sum_{|\alpha'|=N}
\left(\frac{ix'}{|x'|^2}\right)^{\alpha'}  \\
&\cdot\int_{\BR^{N-1}}e^{ix'\cdot\xi'}\pd_{\xi'}^{\alpha'}\left\{
\left(\lambda\frac{d}{d\lambda}\right)^n
\left(\lambda^{1/2}k(\xi',\lambda) x_N e^{-Bx_N}\right)
\right\}\intd\xi'.
\end{align*}
Combining this relation with \eqref{eq:1_appendix} yields
\begin{align*}
\left|\left(\lambda\frac{d}{d\lambda}\right)^n \CK_\lambda(x)\right|
\leq \frac{C}{|x'|^N} |\lambda|^{1/2}\int_{\BR^{N-1}}(|\lambda|^{1/2}+|\xi'|)^{-N}  \intd \xi',
\end{align*}
which, combined with
$$
\int_{\BR^{N-1}}(|\lambda|^{1/2}+|\xi'|)^{-N}  \intd \xi'=|\lambda|^{-1/2}\int_{\BR^{N-1}}(1+|\eta'|)^{-N}\intd \eta',
$$
furnishes
\begin{equation}\label{eq:2_appendix}
\left|\left(\lambda\frac{d}{d\lambda}\right)^n \CK_\lambda(x)\right|
\leq \frac{C}{|x'|^N} \quad (x\in\BR_+^N,\lambda\in\BC_+).
\end{equation}
By direct calculations, we have by \eqref{eq:1_appendix} with $|\alpha'|=0$
\begin{align*}
\left|\left(\lambda\frac{d}{d\lambda}\right)^n\CK_\lambda(x)\right|
%&\leq \CF_{\xi'}^{-1}\left[\lambda^{1/2}k(\xi',\lambda) x_N e^{-Bx_N}\right](x') \\
&\leq C|\lambda|^{1/2}\int_{\BR^{N-1}}e^{-(b/2)(|\lambda|^{1/2}+|\xi'|)x_N} \intd \xi' \\
&\leq C (x_N)^{-1}\int_{\BR^{N-1}}e^{-(b/2)|\xi'|x_N} \intd \xi' \\
&\leq C(x_N)^{-N}.
\end{align*}
Combining this inequality with \eqref{eq:2_appendix} yields
$$
\left|\left(\lambda \frac{d}{d\lambda}\right)^n\CK_\lambda(x)\right|\leq \frac{C}{|x|^N} \quad (x\in\BR_+^N,\lambda\in\BC_+),
$$
which, combined with Lemma \ref{lemm:A2}, proves
that $K(\lambda)$ satisfies the required properties of Lemma \ref{lemm:A3}.

{\bf Case 2}. This case considers $L(\lambda)$.
Let $\CL_\lambda(x)$ be given by
$$
\CL_\lambda(x)=\CL_\lambda(x',x_N)=\CF_{\xi'}^{-1}\left[|\xi'| l(\xi',\lambda) x_N e^{-B x_N}\right](x').
$$
Then $L(\lambda)f$ is written as 
$$
[L(\lambda)f](x)=\int_{\BR_+^N}\CL_\lambda(x'-y',x_N+y_N) f(y)\intd y.
$$
Since $l(\xi',\lambda)\in\BBM_{1,2}(\BC_+)$, it holds by the Leibniz formula and Lemma \ref{lemm:A0} that
\begin{align}\label{eq:4_appendix}
&\left|\pd_{\xi'}^{\alpha'}\left(\lambda\frac{d}{d\lambda}\right)^n\left(|\xi'| l(\xi',\lambda)x_N e^{-Bx_N}\right)\right| \\
&\leq C_{\alpha'}|\xi'|^{1-|\alpha'|}(|\lambda|^{1/2}+|\xi'|)x_N e^{-b(|\lambda|^{1/2}+|\xi'|)x_N} \notag \\
&\leq C_{\alpha'}|\xi'|^{1-|\alpha'|}e^{-(b/2)(|\lambda|^{1/2}+|\xi'|)x_N}. \notag
\end{align}
Applying \cite[Theorem 2.3]{SS01} to $\lambda^n(d/d\lambda)^n\CL_\lambda(x)$ thus furnishes
\begin{equation}\label{eq:6_appendix}
\left|\left(\lambda\frac{d}{d\lambda}\right)^n\CL_\lambda(x)\right|\leq \frac{C}{|x'|^N} \quad (x\in\BR_+^N,\lambda\in\BC_+).
\end{equation}
Similarly to Case 1, we also have by \eqref{eq:4_appendix} with $|\alpha'|=0$
$$
\left|\left(\lambda\frac{d}{d\lambda}\right)^n\CL_\lambda(x)\right|\leq C(x_N)^{-N} \quad (x\in\BR_+^N,\lambda\in\BC_+).
$$
Combining this inequality with \eqref{eq:6_appendix} yields
$$
\left|\left(\lambda\frac{d}{d\lambda}\right)^n \CL_\lambda(x)\right| \leq \frac{C}{|x|^N} \quad (x\in\BR_+^N,\lambda\in\BC_+),
$$
which, combined with Lemma \ref{lemm:A2}, proves
that $L(\lambda)$ satisfies the required properties of Lemma \ref{lemm:A3}.

{\bf Case 3}. This case considers $M(\lambda)$.
Since
$$
m(\xi',\lambda)=\frac{B^2 m(\xi',\lambda)}{B^2}
=\lambda^{1/2}\frac{\lambda^{1/2}m(\xi',\lambda)}{B^2}+\sum_{j=1}^{N-1}|\xi'|\frac{\xi_j}{|\xi'|}\frac{\xi_j m(\xi',\lambda)}{B^2},
$$
one observes that %$M(\lambda)$ is written as
\begin{align*}
&[M(\lambda)f](x) \\
&=\int_0^\infty
\CF_{\xi'}^{-1}\left[\lambda^{1/2}\frac{\lambda^{1/2}m(\xi',\lambda)}{B^2}(x_N+y_N)e^{-B(x_N+y_N)}\wht f(\xi',y_N)\right](x')\intd y_N \\
&+\sum_{j=1}^{N-1}\int_0^\infty
\CF_{\xi'}^{-1}\left[|\xi'|\frac{\xi_j}{|\xi'|}\frac{\xi_j m(\xi',\lambda)}{B^2}(x_N+y_N)e^{-B(x_N+y_N)}\wht f(\xi',y_N)\right](x')\intd y_N.
\end{align*}
It then holds by Lemmas \ref{lemm:algebra} and \ref{lemm:A0}  that
$$
\frac{\lambda^{1/2}m(\xi',\lambda)}{B^2}\in\BBM_{1,1}(\BC_+), \quad
\frac{\xi_j}{|\xi'|}\frac{\xi_j m(\xi',\lambda)}{B^2}\in\BBM_{1,2}(\BC_+),
$$
which, combined with Lemma \ref{prop:R} and the results obtained in Cases 1 and 2, proves that 
$M(\lambda)$ satisfies the required properties of Lemma \ref{lemm:A3}.
This completes the proof of the lemma.
\end{proof}

From now on, we prove Lemma \ref{lemm:multi-inhom}\footnote{
We here prove Lemma \ref{lemm:multi-inhom} \eqref{lemm:multi-inhom_2} only.
The proof of (1) is similar to one of (2).
}.
%One here proves \eqref{lemm:multi-inhom_2} only\footnote{
%The proof of Lemma \ref{lemm:multi-inhom} \eqref{lemm:multi-inhom_1} 
%is similar to one of Lemma \ref{lemm:multi-inhom} \eqref{lemm:multi-inhom_2},
%so that the detailed proof may be omitted here.}.
For functions $g(s)$ and $h(s)$ $(s\geq 0)$ with $\lim_{y_N\to \infty}g(x_N+y_N)h(y_N)=0$, 
one notes that
\begin{align*}
g(x_N)h(0)
&=-\int_0^\infty\frac{\pd}{\pd y_N}\left(g(x_N+y_N)h(y_N)\right) \intd y_N\\
&=-\int_0^\infty g'(x_N+y_N)h(y_N) \intd y_N \\
&-\int_0^\infty g(x_N+y_N)h'(y_N) \intd y_N,
\end{align*}
where $g'(s)=(dg/ds)(s)$ and $h'(s)=(dh/ds)(s)$.
By this relation, %we see that
\begin{align*}
[W_2(\lambda)f](x)
&=-\int_0^\infty\CF_{\xi'}^{-1}\left[w_2(\xi',\lambda)e^{-B(x_N+y_N)}\wht f(\xi',y_N)\right](x') \\
&+\int_0^\infty\CF_{\xi'}^{-1}\left[w_2(\xi',\lambda)B(x_N+y_N) e^{-B(x_N+y_N)}\wht f(\xi',y_N)\right](x') \\
&-\int_0^\infty\CF_{\xi'}^{-1}\left[w_2(\xi',\lambda)(x_N+y_N)e^{-B(x_N+y_N)}\wht{\pd_N f}(\xi',y_N)\right](x'),
\end{align*}
which, combined with
$$
1=\frac{B^2}{B^2}=\frac{\lambda}{B^2}-\sum_{j=1}^{N-1}\frac{(i\xi_j)^2}{B^2}, \quad
B=\frac{B^2}{B}=\frac{\lambda}{B}-\sum_{j=1}^{N-1}\frac{(i\xi_j)^2}{B},
$$
furnishes that
\begin{align*}
&[W_2(\lambda)f](x)
=-\int_0^\infty\CF_{\xi'}^{-1}\left[\frac{w_2(\xi',\lambda)}{B^2}e^{-B(x_N+y_N)}\wht{\lambda f}(\xi',y_N)\right](x') \\
&\quad +\sum_{j=1}^{N-1}
\int_0^\infty\CF_{\xi'}^{-1}\left[\frac{w_2(\xi',\lambda)}{B^2}e^{-B(x_N+y_N)}\wht{\pd_j^2 f}(\xi',y_N)\right](x') \\
&\quad +\int_0^\infty\CF_{\xi'}^{-1}\left[\frac{w_2(\xi',\lambda)}{B}(x_N+y_N) e^{-B(x_N+y_N)}\wht{\lambda f}(\xi',y_N)\right](x') \\
&\quad -\sum_{j=1}^{N-1}\int_0^\infty\CF_{\xi'}^{-1}
\left[\frac{w_2(\xi',\lambda)}{B}(x_N+y_N) e^{-B(x_N+y_N)}\wht{\pd_j^2 f}(\xi',y_N)\right](x') \\
&\quad-\int_0^\infty\CF_{\xi'}^{-1}\left[\frac{\lambda^{1/2}w_2(\xi',\lambda)}{B^2}(x_N+y_N)e^{-B(x_N+y_N)}\wht{\lambda^{\frac{1}{2}}\pd_N f}(\xi',y_N)\right](x') \\
&\quad +\sum_{j=1}^{N-1}\int_0^\infty\CF_{\xi'}^{-1}\left[\frac{i\xi_j w_2(\xi',\lambda)}{B^2}(x_N+y_N)e^{-B(x_N+y_N)}\wht{\pd_j\pd_N f}(\xi',y_N)\right](x'). % \\
%&=:\sum_{k=1}^6[Z_k(\lambda)(\nabla^2f,\lambda^{1/2}\nabla f,\lambda f)](x). %+[Z_1(\lambda)f](x)+[Z_1(\lambda)f](x)+[Z_1(\lambda)f](x)+[Z_1(\lambda)f](x) + [Z_1(\lambda)f](x)
\end{align*}
From this viewpoint, for
$$
\BF=(\{F_{jk}\}_{j,k=1}^N,\{G_j\}_{j=1}^N,H)\in L_q(\BR_+^N)^{N^2}\times L_q(\BR_+^N)^N\times L_q(\BR_+^N),
$$
let us define
\begin{align*}
Z_1(\lambda)\BF
&=-\int_0^\infty\CF_{\xi'}^{-1}\left[\frac{w_2(\xi',\lambda)}{B^2}e^{-B(x_N+y_N)}\wht{H}(\xi',y_N)\right](x'), \\
Z_2(\lambda)\BF
&=
\sum_{j=1}^{N-1}
\int_0^\infty\CF_{\xi'}^{-1}\left[\frac{w_2(\xi',\lambda)}{B^2}e^{-B(x_N+y_N)}\wht{F_{jj}}(\xi',y_N)\right](x'), \\
Z_3(\lambda)\BF
&=\int_0^\infty\CF_{\xi'}^{-1}\left[\frac{w_2(\xi',\lambda)}{B}(x_N+y_N) e^{-B(x_N+y_N)}\wht{H}(\xi',y_N)\right](x'), \\
Z_4(\lambda)\BF
&=-\sum_{j=1}^{N-1}\int_0^\infty\CF_{\xi'}^{-1}
\left[\frac{w_2(\xi',\lambda)}{B}(x_N+y_N) e^{-B(x_N+y_N)}\wht{F_{jj}}(\xi',y_N)\right](x'), \\
Z_5(\lambda)\BF
&=-\int_0^\infty\CF_{\xi'}^{-1}\left[\frac{\lambda^{1/2}w_2(\xi',\lambda)}{B^2}(x_N+y_N)e^{-B(x_N+y_N)}\wht{G_N}(\xi',y_N)\right](x'), \\
Z_6(\lambda)\BF
&=\sum_{j=1}^{N-1}\int_0^\infty\CF_{\xi'}^{-1}\left[\frac{i\xi_j w_2(\xi',\lambda)}{B^2}(x_N+y_N)e^{-B(x_N+y_N)}\wht{F_{jN}}(\xi',y_N)\right](x').
\end{align*}
It is then clear that
\begin{align*}
W_2(\lambda) f = \sum_{k=1}^6 Z_k(\lambda)(\nabla^2 f,\lambda^{1/2}f,\lambda f).
\end{align*}
In addition, one has by $w_2(\xi',\lambda)\in\BBM_{1,1}(\BC_+)$ and Lemmas \ref{lemm:algebra} and \ref{lemm:A0} 
\begin{equation*}
\frac{w_2(\xi',\lambda)}{B^2} \in \BBM_{-1,1}(\BC_+),
\end{equation*}
which, combined with Lemma \ref{lemm:A2_2}, furnishes that for $k=1,2$ and $n=0,1$ %\cite[Lemma 5.6]{SS12}
%\footnote{
%Although that lemma proves the $\CR$-boundedness for $\lambda$ varying in some sector,
%the result is also true for $\lambda\in\BC_+$ when $k_0\in\BBM_{-1,1}(\BC_+)$.}, furnishes that for $k=1,2$
\begin{align*}
Z_k(\lambda)\in\hlm(\BC_+,\CL(L_q(\BR_+^N)^{N^2+N+1},H_q^2(\BR_+^N))), \\
\CR_{\CL(L_q(\BR_+^N)^{N^2+N+1})}
\left(\left\{\left.\left(\lambda\frac{d}{d\lambda}\right)^n
\left(\CT_\lambda Z_k(\lambda)\right)\right|\lambda\in\BC_+\right\}\right)\leq C_{N,q},
\end{align*}
with some positive constant $C_{N,q}$.

Next, we consider $Z_3(\lambda)$. By direct calculations, we see for $j,k=1,\dots,N-1$
%Let $F=(F_1,F_2,F_3)\in L_q(\BR_+^N)^{N^2+N+1}$. Then
\begin{align*}
\pd_N^2 Z_3(\lambda)\BF
&=
-2\int_0^\infty\CF_{\xi'}^{-1}\left[w_2(\xi',\lambda) e^{-B(x_N+y_N)}\wht{H}(\xi',y_N)\right](x')  \\
&+\int_0^\infty\CF_{\xi'}^{-1}\left[w_2(\xi',\lambda)B(x_N+y_N) e^{-B(x_N+y_N)}\wht{H}(\xi',y_N)\right](x'),  \\
\pd_j\pd_N Z_3(\lambda)\BF
&=
\int_0^\infty\CF_{\xi'}^{-1}\left[\frac{i\xi_j w_2(\xi',\lambda)}{B} e^{-B(x_N+y_N)}\wht{H}(\xi',y_N)\right](x')  \\
&-\int_0^\infty\CF_{\xi'}^{-1}\left[i\xi_jw_2(\xi',\lambda)(x_N+y_N) e^{-B(x_N+y_N)}\wht{H}(\xi',y_N)\right](x'),  \\
\pd_j\pd_k Z_3(\lambda)\BF
&=
-\int_0^\infty\CF_{\xi'}^{-1}\left[\frac{\xi_j\xi_kw_2(\xi',\lambda)}{B}(x_N+y_N) e^{-B(x_N+y_N)}\wht{H}(\xi',y_N)\right](x'),
\\
\lambda^{1/2}\pd_N Z_3(\lambda)\BF
&=
\int_0^\infty\CF_{\xi'}^{-1}\left[\frac{\lambda^{1/2}w_2(\xi',\lambda)}{B} e^{-B(x_N+y_N)}\wht{H}(\xi',y_N)\right](x') \\
&-\int_0^\infty\CF_{\xi'}^{-1}\left[\lambda^{1/2}w_2(\xi',\lambda)(x_N+y_N) e^{-B(x_N+y_N)}\wht{H}(\xi',y_N)\right](x'),  \\
\lambda^{1/2}\pd_j Z_3(\lambda)\BF
&=\int_0^\infty\CF_{\xi'}^{-1}\left[\frac{\lambda^{1/2}i\xi_j w_2(\xi',\lambda)}{B}(x_N+y_N) e^{-B(x_N+y_N)}\wht{H}(\xi',y_N)\right](x'), \\
\lambda Z_3(\lambda)\BF
&=
\int_0^\infty\CF_{\xi'}^{-1}\left[\frac{\lambda w_2(\xi',\lambda)}{B}(x_N+y_N) e^{-B(x_N+y_N)}\wht{H}(\xi',y_N)\right](x').
\end{align*}
Note that by $w_2(\xi',\lambda)\in\BBM_{1,1}(\BC_+)$ and Lemmas \ref{lemm:algebra} and \ref{lemm:A0}
\begin{align*}
\frac{i\xi_j w_2(\xi',\lambda)}{B},\ \frac{\lambda^{1/2}w_2(\xi',\lambda)}{B}&\in\BBM_{1,1}(\BC_+); \\
w_2(\xi',\lambda)B,\ i\xi_j w_2(\xi',\lambda),\ \frac{\xi_j\xi_k w_2(\xi',\lambda)}{B}, 
\ \lambda^{1/2}w_2(\xi',\lambda), &\\
\frac{\lambda^{1/2}i\xi_j w_2(\xi',\lambda)}{B},
\ \frac{\lambda w_2(\xi',\lambda)}{B} &\in\BBM_{2,1}(\BC_+).
\end{align*}
One thus observes by Lemmas \ref{lemm:A2_2} and \ref{lemm:A3} that for $n=0,1$
\begin{align*}
Z_3(\lambda)\in\hlm(\BC_+,\CL(L_q(\BR_+^N)^{N^2+N+1},H_q^2(\BR_+^N))), \\
\CR_{\CL(L_q(\BR_+^N)^{N^2+N+1})}
\left(\left\{\left.\left(\lambda\frac{d}{d\lambda}\right)^n
\left(\CT_\lambda Z_3(\lambda)\right)\right|\lambda\in\BC_+\right\}\right)\leq C_{N,q}.
\end{align*}
Similarly to $Z_3(\lambda)$, it holds that for $k=4,5,6$ and $n=0,1$
\begin{align*}
Z_k(\lambda)\in\hlm(\BC_+,\CL(L_q(\BR_+^N)^{N^2+N+1},H_q^2(\BR_+^N))), \\
\CR_{\CL(L_q(\BR_+^N)^{N^2+N+1})}
\left(\left\{\left.\left(\lambda\frac{d}{d\lambda}\right)^n
\left(\CT_\lambda Z_k(\lambda)\right)\right|\lambda\in\BC_+\right\}\right)\leq C_{N,q}.
\end{align*}
This completes the proof of Lemma \ref{lemm:multi-inhom}.

%%%%%%%%%%%%%%%%%%%%%%%%%%%%%%%%%%%%%%%%%%%%%%%%%%%%%%%%%%%%%%%%%%%%%%
%\bibliographystyle{plain}
%\bibliography{170518references}

%%%%%%%%%%%%%%%%%%%%%%%%%%%%%%%%%%%%%%%%%%%%%%%%%%%%%%%%%%%%%%%%%%%%%%

\end{document}